\tikzset{
    buffer/.style={
        draw,
        shape border rotate=-90,
        isosceles triangle,
        isosceles triangle apex angle=60,
        node distance=2cm,
        minimum height=4em
    }
}
\tikzset{>={Latex[width=2mm,length=2mm]}}
\numberwithin{equation}{section}
\theoremstyle{definition}
\newtheorem{theorem}[equation]{Theorem}
\newtheorem*{theorem*}{Theorem}
\newtheorem{lemma}[equation]{Lemma}
\newtheorem{definition}[equation]{Definition}
\newtheorem*{definition*}{Definition}
\newtheorem{corollary}[equation]{Corollary}
\newtheorem*{corollary*}{Corollary}
\newtheorem{proposition}[equation]{Proposition}
\newtheorem*{proposition*}{Proposition}
\newtheorem{example}[equation]{Example}
\newtheorem{question}[equation]{Question}
\newtheorem{claim}[equation]{Claim}
\crefname{lemma}{lemma}{lemmas}
\crefname{theorem}{theorem}{theorems}
\crefname{proposition}{proposition}{propositions}
\crefname{corollary}{corollary}{corollaries}
\crefname{example}{example}{examples}
\crefname{definition}{definition}{definitions}
\newcommand{\myref}[1]{\hyperref[#1]{\Cref{#1}}}
\newenvironment{myproof}[2] {\paragraph{\it Proof of {#1} {\ref{#2}}:}}{\hfill$\square$}
\DeclareMathOperator{\restrict}{\upharpoonright}
\DeclareMathOperator{\rk}{rk}
\newcommand{\set}[1]{\left\{ {#1} \right\}}
\newcommand{\seq}[1]{{\left( {#1} \right)}}
\newcommand{\comment}[1]{}
\newcommand{\downward}[1]{\left\lceil {#1} \right\rceil}
\newcommand{\odownward}[1]{\mathring{\left\lceil {#1} \right\rceil}}
\newcommand{\mF}{{\mathcal{F}}}
\newcommand{\mP}{{\mathcal{P}}}
\newcommand{\mR}{{\mathcal{R}}}
\newcommand{\mU}{{\mathcal{U}}}
\newcommand{\rmV}{{\mathrm{V}}}
\newcommand{\rmE}{{\mathrm{E}}}
\newcommand{\rmC}{{\mathrm{C}}}
\newcommand{\rmt}{{\mathrm{t}}}
\title{On covering properties of end and ray spaces}
\author{Rodrigo Rey Carvalho
\thanks{Institute of Mathematics, Statistics and Computer Science.
E-mail: rrc@ime.usp.br}
}
\author{Matheus Duzi
\thanks{Institute of Mathematics and Computer Sciences. E-mail: matheus.duzi.costa@usp.br}
}
\author{Vinicius de Oliveira Rodrigues
\thanks{Institute of Mathematics, Statistics and Computer Science. E-mail: vinior@ime.usp.br}
}
\affil{University of São Paulo}
\begin{document}
\maketitle
\begin{abstract}
We provide new results on combinatorial characterizations of covering properties in end spaces and ray spaces.
In particular, we characterize the Lindelöf degree, the extent, the Rothberger property, $\sigma$-compactness and the Menger property for ray, end and edge-end spaces. We show that $\sigma$-compactness and the Menger property are equivalent for these spaces, and that they are all $D$-spaces.
As an application of some of these characterizations, we are able to provide combinatorial characterizations of graphs with countably many ends and edge-ends.

MSC: 05C63, 54D20, 06A07, 54D45
\end{abstract}
\section{Introduction}

The end space of an infinite graph captures the intuitive concept of the boundary of the graph.
Intuitively, an end is a ``point at the infinity'' that can be reached by following a ray (one-way infinite path) in the graph.
A natural topology can be defined on the set of ends of a graph to better study their properties and give precise meaning to this intuition \cite{halin1964unendliche}.
This topological space is known as the \emph{end space} of the graph.
This topology generalizes the Freudenthal boundary developed by Freudenthal and Hopf in the 1940s for infinite groups \cite{freudenthal1942neuaufbau, hopf1943enden}.

End spaces are easily seen to be Fréchet, zero-dimensional, Hausdorff spaces. Their topological properties and their relationship with the combinatorial structure of the underlying graph have been extensively studied in recent years.
For instance, they have been shown to be normal \cite{sprussel2008endSpaces}, and, later, hereditarily ultraparacompact \cite{kurkofka2021approximating}.
The compactness of end spaces was combinatorially characterized in \cite{diestel2006end}.
Recently, Kurkofka and Pitz \cite{kurkofka2024representationtheoremendspaces} proved that every end space of a graph can be represented as a particular ray space of an order-theoretic rooted tree.
This representation theorem allows us to study end spaces using the combinatorial structure of trees, which are sometimes easier to analyze.
A topological characterization of end spaces was given in \cite{pitz2023characterisingpathraybranch}, where the Lindelöf degree of ray spaces have also been characterized in terms of the combinatorial structure of the underlying tree.
However, it is not easy to translate these results to end spaces in terms of graphs by using the representation theorem of \cite{kurkofka2024representationtheoremendspaces}.

Formally, an end is an equivalence class of rays in a graph.
As properly recalled in the next subsection, two rays are equivalent in the sense of ends if, and only if, no matter how many finitely many vertices are removed from the graph, there is still a path connecting them.
A natural modification of this definition is to consider the removal of finitely many edges instead of vertices.
With this modification, we obtain the notion of edge-ends, which have been recently studied in papers such as \cite{aurichi2024topologicalremarksendedgeend} and \cite{pitz2025metrization}.

In this paper, we provide new results on combinatorial characterizations of covering properties in end spaces, edge-end-spaces and ray spaces.
In particular, we combinatorially characterize the Lindelöf degree and the extent of end-spaces and edge-end spaces of graphs in terms of the underlying graph, and the Rothberger property for end spaces, edge-end spaces of graphs and ray spaces of trees.
We also provide characterizations of the Menger property and $\sigma$-compactness for these spaces, and show that these two properties are equivalent for these classes of spaces.
Moreover, we show that all these classes of spaces are $D$-spaces.

In the next subsections we review some basic definitions and relevant results.
For a list of the main results of the paper, please refer to Section~\ref{subsec:mainresults}.
\subsection{Basic definitions}
In this subsection we fix the notation and terminology used throughout this paper, and review some important basic definitions and relevant results. 

Most concepts that were not defined in this section will be defined when needed.
In particular, we postpone the definition of edge-ends and the edge-end space of a graph to Section 6. 
We refer the reader to \cite{engelking1977general} for general topology and to \cite{diestel2025graph} for graph theory. 

The cardinality of a set $A$ is denoted by $|A|$.
The finite cardinals (the natural numbers) are identified with the von Neumann ordinals, so $0=\emptyset$ and $n+1=\{0, \dots, n\}$ for every $n \in \omega=\mathbb N$.
If $\kappa$ is a (finite or infinite) cardinal number and $A$ is a set, $[A]^\kappa$ is the set of all subsets of $A$ of cardinality $\kappa$.
Likewise, $[A]^{<\kappa}$ and $[A]^{\leq\kappa}$ are the sets of all subsets of $A$ with cardinality less than $\kappa$ and at most $\kappa$, respectively.

$A^B$ is the set of all functions from $B$ into $A$.
If $\alpha$ is an ordinal, $A^{<\alpha}=\bigcup_{\beta<\alpha} A^\beta$ and $A^{\leq \alpha}=\bigcup_{\beta\leq\alpha} A^\beta$.
In particular, $A^{<\omega}$ is the set of all finite sequences of elements of $A$.

\subsubsection{On graphs and ends}
We now fix the notation we will use and state some basic, standard definitions about graphs that are relevant to our work.

\begin{definition}[Graph]
    A \emph{graph} is a pair $G=(V, E)$ where $V$ is a set and $E\subseteq [V]^2$.
    We write $V=V(G)$ and $E=E(G)$.
\end{definition}

\begin{definition}[Subgraphs and induced subgraph]
    Let $G$ be a graph.
    \begin{enumerate}[label=(\roman*)]
    \item A \emph{subgraph} of $G$ is a graph $H=(V', E')$ such that $V'\subseteq V(G)$ and $E'\subseteq E(G)\cap [V']^2$.
    \item If $H\subseteq V(G)$, the \emph{induced subgraph} of $G$ on $H$ is the graph $(H, E(G)\cap [H]^2)$.
    \end{enumerate}
\end{definition}

\begin{definition}[Paths, rays and tails]
    Let $G$ be a graph.
    \begin{enumerate}[label=(\roman*)]
    \item A \emph{path} in $G$ is an injective sequence of vertices $P=(v_0, \dots, v_n)$ such that $n\in \omega$ and for every $i<n$, $\{v_i, v_{i+1}\} \in E$.
    In this case, we say that $P$ is a \emph{path from $v_0$ to $v_n$}, and denote this by $v_0Pv_n$.
    \item A \emph{ray} in $G$ is an injective sequence of vertices $R=(v_0, v_1, \dots)$ such that for every $i\geq 0$, $\{v_i, v_{i+1}\} \in E$.
    \item A \emph{tail} of a ray $R$ is a sequence of the form $(v_n, v_{n+1}, \dots)$ for some $n\geq 0$.
    \end{enumerate}
\end{definition}

\begin{definition}[Connected graphs, components and trees]
    Let $G$ be a graph.
    \begin{enumerate}[label=(\roman*)]
    \item We say that $G$ is \emph{connected} if for every pair of distinct vertices $u, v \in V(G)$, there exists a path from $u$ to $v$.
    \item We say that $G$ is a tree if every two distinct vertices $u, v \in V(G)$ are connected by a unique path in $G$.
    \item A \emph{connected component} of $G$ is a maximal connected subgraph of $G$. Notice that connected components are always induced by its vertices.
    \item A \emph{rooted tree} is a pair $(T, r)$ where $T$ is a tree and $r \in V(T)$ is called the \emph{root} of $T$.
    \item A \emph{subtree} of a rooted tree $(T, r)$ is a rooted tree $(T', r)$ such that $T'$ is an induced subgraph of $T$.
    \end{enumerate}
\end{definition}

\begin{definition}[Tree order]
    Let $(T, r)$ be a rooted tree.
    The \emph{tree order of $T$ (with respect to the root $r$)} is the relation $\leq$ defined as follows:

    For $x, y\in T$, we say that $x\leq y$ if $x$ lies on the unique path from $r$ to $y$.
    In case $x\leq y$ and $x\neq y$, we may write $x<y$.

    For $x \in T$, we define $\odownward{x}=\{y \in T: y< x\}$ and $\downward{x}=\{y \in T: y\leq x\}$.
\end{definition}

It is clear that the tree order is a well-founded partial order on $T$.
Moreover, for every $x \in T$, $\odownward{x}$ is finite, and, if $x' \in T'$ and $(T', R)$ is a subtree of $(T, r)$, then $\{y \in T: y< x\}=\{y \in T': y< x\}$.
\begin{definition}[Normal tree]
    Let $(T, r)$ be a rooted tree, where $T$ is a subgraph of a graph $G$.

    A \emph{$T$-path (with respect to $G$)} is a path $P=(x_0, \dots, x_n)$ in $G$ such that $x_0, x_n \in T$ and for every $i<n$, $x_i\notin T$.
    In this case, we say that $P$ is a \emph{$T$-path from $x_0$ to $x_n$}.

    We say that $T$ is \emph{normal} if whenever $x, y \in T$ are distinct points incompatible in the tree order of $T$, there is no $T$-path from $x$ to $y$ in $G$.
\end{definition}

A very useful property of normal trees is stated in the following well-known lemma.
For a proof, see \cite[Proposition 1.2.1]{diestel2025graph}.

\begin{lemma}
    Let $(T, r)$ be a normal tree of a graph $G$.
    Then for all distinct points $x, y \in T$ which are incompatible in the tree order of $T$, then $\odownward{x}\cap \odownward{y}$ separates $x$ and $y$ in $G$.
\end{lemma}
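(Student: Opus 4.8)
The plan is to reduce the statement to an order-theoretic analysis of how an arbitrary $x$--$y$ path in $G$ must interact with the tree order, and then to extract from such a path a vertex lying in $\odownward{x}\cap\odownward{y}$.

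First I would set up notation. Since $x$ and $y$ are incompatible, $\odownward{x}$ and $\odownward{y}$ are finite chains both containing the root, so $\odownward{x}\cap\odownward{y}$ is a nonempty finite chain; let $m$ be its maximum, the meet of $x$ and $y$. Then $m<x$ and $m<y$, and on the path from $m$ to $x$ there is a unique $c$ with $m<c\leq x$, the child of $m$ below $x$. The branch $B=\{v\in T:c\leq v\}$ then contains $x$ but not $m$ (as $m<c$) and not $y$: indeed $c\leq y$ would make $c$ a common lower bound of $x$ and $y$, which either equals $x$ or $y$ (contradicting incomparability) or lies strictly above $m$ (contradicting the maximality of $m$). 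I also record that any $v<c$ satisfies $v\leq m$, since $m$ is the immediate predecessor of $c$ in the tree order.

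Then I would take an arbitrary $x$--$y$ path $P$ in $G$ and list the vertices of $P$ that lie in $T$, in the order they appear along $P$, as $t_0=x,t_1,\dots,t_\ell=y$. The crucial step is that consecutive entries $t_i,t_{i+1}$ are comparable: the segment of $P$ strictly between them has all interior vertices outside $T$, so it is a $T$-path, and normality of $T$ forbids a $T$-path between incompatible vertices. Since $t_0=x\in B$ and $t_\ell=y\notin B$, I may take the largest index $i$ with $t_i\in B$, so that $t_{i+1}\notin B$. As $t_i$ and $t_{i+1}$ are comparable and $t_{i+1}\notin B$, we cannot have $t_i\leq t_{i+1}$ (that would give $c\leq t_i\leq t_{i+1}$), so $t_{i+1}<t_i$; then $t_{i+1}$ and $c$ both lie in the chain $\downward{t_i}$, hence are comparable, and since $c\not\leq t_{i+1}$ this forces $t_{i+1}<c$, whence $t_{i+1}\leq m$. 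As $m<x$ and $m<y$, this places $t_{i+1}\in\odownward{x}\cap\odownward{y}$, so $P$ meets the separating set, as required.

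The main obstacle --- and the only point where normality is actually used --- is the comparability of consecutive $T$-vertices of $P$; everything else is bookkeeping in the tree order. I would be careful to treat the degenerate case in which $t_i$ and $t_{i+1}$ are adjacent in $P$ (a $T$-path of length one, with empty interior, to which normality still applies), and to keep track of the strict-versus-nonstrict distinction between $\odownward{\cdot}$ and $\downward{\cdot}$, so that $t_{i+1}\leq m$ together with $m<x$ and $m<y$ indeed yields $t_{i+1}<x$ and $t_{i+1}<y$.
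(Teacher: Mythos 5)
Your argument is correct. Note that the paper does not actually prove this lemma: it is quoted as a known fact with a pointer to Diestel's book (Proposition~1.2.1 there), so the only comparison available is with that standard proof. Your proof is essentially that argument: the heart of both is that the $T$-vertices of an arbitrary $x$--$y$ path, listed in path order, have pairwise comparable consecutive entries because the subpath between two consecutive ones is a $T$-path, and normality forbids $T$-paths between incomparable vertices. Where the textbook proof then takes a tree-order-minimal element $t$ of this sequence and shows by induction along the sequence that $t$ lies below every entry (hence below $x$ and $y$), you instead locate the last entry lying in the branch above the child $c$ of the meet $m=\max\bigl(\odownward{x}\cap\odownward{y}\bigr)$ and push its successor down below $c$, hence into $\downward{m}\subseteq\odownward{x}\cap\odownward{y}$. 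Both routes are sound; yours requires the extra setup of $m$, $c$ and the branch $B$, but in exchange the descent to the separator is a single comparison rather than an induction along the whole sequence. Your handling of the degenerate length-one $T$-path (an edge of $G$ between two $T$-vertices) and of the strict versus non-strict inequalities is also correct.
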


As already mentioned, an end is, intuitively, a ``point at infinity'' that can be reached by following a ray in a graph.
Ends were first formally defined by Halin in \cite{halin1964unendliche}.
We review the definition of an end and of the underlying topology of the end space below.
\begin{definition}[End space]\label{def:endspace}
    Let $G$ be a graph.
    \begin{enumerate}[label=(\alph*)]
        \item We say that two rays $R$ and $R'$ in $G$ are \emph{equivalent} if for every finite set $F\subseteq G$, the of $R$ and $R'$ in $G\setminus F$ lie in the same connected component of $G\setminus F$.
        
        \item The \emph{end space} of $G$, denoted by $\Omega(G)$, is the set of all equivalence classes of rays in $G$.
        An element $\epsilon$ of $\Omega(G)$ is called an \emph{end} of $G$.

        \item Let $F$ be a finite set and $C$ be a connected component of $G\setminus F$.
        We define $\Omega(C, F)$ as the set of all ends $\epsilon$ such that there exists a ray $R \in \epsilon$ contained in $C$.

        \item Given an end $\epsilon$ and a finite set $F$, we define $G(\epsilon, F)$ as the unique connected component of $G\setminus F$ which contains a tail of every ray in $\epsilon$.
        We define $\Omega(\epsilon, F)=\Omega(G(\epsilon, F), F)$.

        \item Given a finite set $F\subset \rmV(G)$, we define $\rmC(G, F) = \set{G(\varepsilon, F):\varepsilon\in \Omega(G)}$ (equivalently, $\rmC(G, F)$ is the family of connected components of $G\setminus F$ which are not rayless).

        \item We say that $U\subseteq \Omega(G)$ is \emph{open} if for every end $\epsilon$ in $U$, there exists a finite set $F$ such that $\Omega(\epsilon, F)\subseteq U$.
    \end{enumerate}
\end{definition}

It is easily seen that the collection of all open sets of $\Omega(G)$ forms a topology on $\Omega(G)$.
It is well known that this topology is Hausdorff, regular, Fréchet-Urysohn and zero-dimensional.
These spaces have also been shown to be hereditarily ultraparacompact \cite{kurkofka2021approximating} and monotonically normal \cite{pitz2023characterisingpathraybranch}.

\begin{definition}
    Let $G$ be a graph and $H\subseteq V(G)$.
    We define $\partial_{\Omega}(H)$ as the set of all ends $\epsilon \in \Omega(G)$ such that, for every finite set $F\subseteq H$, $G(\epsilon, F)\cap H\neq \emptyset$.

    We say that a subset $A\subseteq G$ is \emph{dispersed} if $\partial_{\Omega}(A)=\emptyset$.
\end{definition}
It is easily seen that $\partial_{\Omega}(H)$ is closed in $\Omega(G)$.
\subsubsection{On trees and rays}
Recall that a (order theoretic) tree is a partially ordered set $(T, <)$ such that for every $t \in T$, the set $\odownward{t}=\{s \in T: s< t\}$ is well-ordered by $<$.

\begin{definition}
    Let $T$ be a (order theoretic) tree.
    A subset $R\subset T$ is a \emph{ray} if it is a downward closed chain with no maximal element.
    Notice that $R$ needs not to be maximal.

    We say that $t\in T$ is a \emph{top} of a ray $R$ if $R = \odownward{t}$.
    Thus, $t \in T$ is a top of some ray if, and only if, the height of $t$ is a limit ordinal.

    The set of all rays of $T$ is denoted by $\mathcal{R}(T)$ and we topologize it as a subspace of $\mathcal P(T)\approx 2^T$, that is, a subbasis is given by the sets of the form $[t]=\{R \in \mathcal R(T): t \in R\}$ and $\mathcal R(T)\setminus [t]=\{R \in \mathcal R(T): t \notin R\}$ for $t \in T$.
\end{definition}

As shown in \cite{pitz2023characterisingpathraybranch}, we may describe a basis of the topology on $\mathcal{R}(T)$ as follows.

\begin{lemma}[{\cite[Lemma~2.1.]{pitz2023characterisingpathraybranch}}]
    Let $T$ be an order-theoretic tree and $x \in T$ be a ray.
    A local basis for $x$ is the collection of all sets of the form
    \begin{equation*}
        [t, F] := [t] \setminus \bigcup_{s \in F}[s]
    \end{equation*}
    where $t \in x$ and $F$ is a finite set of tops of $x$.
\end{lemma}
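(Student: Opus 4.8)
The plan is to verify the two defining properties of a local basis: that each set $[t,F]$ is an open neighborhood of $x$, and that every basic open set containing $x$ contains some $[t,F]$. For the first property, I would first observe that $[t,F]=[t]\cap\bigcap_{s\in F}\big(\mathcal R(T)\setminus[s]\big)$ is a finite intersection of subbasic open sets, hence open. To see that $x\in[t,F]$, note that $t\in x$ gives $x\in[t]$, while each $s\in F$ is a top of $x$, so $x=\odownward s$ and therefore $s\notin x$, i.e.\ $x\notin[s]$.

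For the second property, let $U$ be a basic open set with $x\in U$, so that $U=\bigcap_{a\in A}[a]\cap\bigcap_{s\in B}\big(\mathcal R(T)\setminus[s]\big)$ for finite sets $A,B\subseteq T$; membership $x\in U$ translates to $A\subseteq x$ and $B\cap x=\emptyset$. The key structural fact I would exploit is that whenever $R\in[t']$ for some $t'\in x$, the set $R$ is a downward closed chain containing $t'$, so $\downward{t'}\subseteq R$ and every element of $R$ is comparable to $t'$. Using $s\notin x$ together with the downward closedness of $x$, one checks that no $s\in B$ lies strictly below an element of $x$ (otherwise that element would force $s\in x$); hence for each $s\in B$ exactly one of two situations occurs: either $x\subseteq\odownward s$, or some element of $x$ is incomparable to $s$.

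I would handle the two situations separately. If $x\subseteq\odownward s$, I claim there is a top $u_s\le s$ of $x$: if $x=\odownward s$ then $s$ itself is a top, and otherwise $\odownward s\setminus x$ is a nonempty subset of the well-ordered set $\downward s$, whose least element $m$ satisfies $\odownward m=x$, making $m$ a top of $x$ with $m<s$. Collecting these across the relevant $s$ yields a finite set $F$ of tops of $x$; for any $R\in[t',F]$ we then have $u_s\notin R$, and since $u_s\le s$ and $R$ is downward closed, this forces $s\notin R$. For the remaining $s\in B$, some element of $x$ is incomparable to $s$, and I would discard such $s$ by choosing the base point $t^\ast$ high enough in $x$ to be itself incomparable to $s$; then $s\in R\in[t^\ast]$ is impossible, as $R$ is a chain through $t^\ast$.

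The step requiring the most care, and the main obstacle, is selecting a single $t^\ast\in x$ that simultaneously dominates $A$ and is incomparable to each of the finitely many incomparable-type $s\in B$. I would resolve this by observing that each requirement (lying above every element of $A$, or lying in $x\setminus\downward s$) cuts out a nonempty final segment of the chain $x$, and that final segments of a chain are totally ordered by inclusion, so a finite intersection of nonempty final segments is again nonempty; any $t^\ast$ in this intersection works. With this $t^\ast$ and the set $F$ of tops produced above, a direct verification along the three cases $a\in A$, top-type $s$, and incomparable-type $s$ shows $[t^\ast,F]\subseteq U$, which completes the proof.
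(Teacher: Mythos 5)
Your proof is correct and is essentially the standard argument for this fact; the paper itself only cites the result from Pitz's Lemma~2.1 without reproducing a proof, so there is nothing in the text to diverge from. All the delicate points — extracting a top $u_s\le s$ of $x$ as the least element of $\odownward{s}\setminus x$ when $x\subsetneq\odownward{s}$, and observing that each constraint (dominating $A$, being incomparable to the non-top-type elements of $B$) carves out a nonempty final segment of the chain $x$ so that finitely many of them intersect — are handled correctly.
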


We will call the sets $[t, F]$, where $t \in T$ and $F$ is a (possibly empty) set of tops of a ray containing $t$, \emph{standard basics sets} of $\mathcal{R}(T)$.

An important recent representation theorem due to Kurkofka and Pitz states that every end space can be seen as a particular ray space of a rooted tree.
\begin{theorem}[\cite{kurkofka2024representationtheoremendspaces}]\label{theorem:representation}
    Let $X$ be a topological space. The following are equivalent:
    \begin{enumerate}[label=\emph{(\arabic*)}]
        \item $X$ is homeomorphic to the end space of a graph,
        \item $X$ is homeomorphic to the ray space of a special (rooted) order tree.
    \end{enumerate}
\end{theorem}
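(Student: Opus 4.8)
The plan is to prove the two implications separately. I expect the direction (1) $\Rightarrow$ (2) to carry most of the difficulty, since it requires manufacturing an order tree that reproduces the (possibly very complicated) separation structure of the ends of an arbitrary graph, whereas (2) $\Rightarrow$ (1) can be handled by a fairly explicit construction of a graph from the given tree.

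For (2) $\Rightarrow$ (1), let $(T,r)$ be a special rooted order tree. I would build a graph $G_T$ whose vertices are the non-limit nodes of $T$ (the root and the successor nodes), joining $v_t$ to $v_s$ whenever $s$ is the immediate predecessor of $t$. The feature to accommodate is a node $t$ of limit height, which has no immediate predecessor: using that $\odownward{t}$ is then a ray, I realize it by the graph-ray running downward through a cofinal sequence of successor nodes below $t$, so that this graph-ray converges, in the end space, to the point represented by the branch $\odownward{t}$. I would then send a ray $x \in \mathcal{R}(T)$ to the end of $G_T$ determined by the graph-ray through the successor nodes of $x$ (which are cofinal in $x$ since $x$ has no maximum), and check this is a bijection onto $\Omega(G_T)$. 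For the topology, by the local basis lemma it suffices to match the local basic sets $[t,F]$ with $t \in x$ a successor node and $F$ a finite set of tops of $x$: the set $[t]$ corresponds to deleting the single vertex $v_s$ ($s$ the predecessor of $t$) and keeping the component of $v_t$, while removing the finitely many $[s']$ for $s'\in F$ corresponds to deleting a finite separator cutting off the finitely many sub-branches sitting above $x$. That these sub-branches can be cut off by a \emph{finite} set is precisely what the defining conditions of a special tree provide.

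For (1) $\Rightarrow$ (2), given a graph $G$ (which we may assume connected, treating $\Omega(G)$ componentwise and joining the resulting trees under a common fresh root), I would construct the order tree by a transfinite recursion refining $G$ by finite separators. The root is associated with $G$; a node $t$ associated with a region $R_t$ carrying the set $\Omega_t$ of ends having a ray in $R_t$ is split by choosing a finite $F_t \subseteq \rmV(R_t)$, and the children of $t$ are declared to be the members of $\rmC(R_t, F_t)$, ordered by reverse inclusion. At limit stages one takes intersections of the nested regions along each branch, and these limit regions become the tops of the tree. A bookkeeping argument that eventually applies, for every pair of distinct ends, a separator witnessing $G(\epsilon, F) \neq G(\epsilon', F)$ then guarantees that each end $\epsilon$ determines a unique branch (the nested regions $G(\epsilon, F_t)$ containing it) and, conversely, that each ray of the resulting tree $T$ pins down exactly one end; thus $\epsilon \mapsto \{t : \epsilon \in \Omega_t\}$ is the desired bijection $\Omega(G) \to \mathcal{R}(T)$.

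The main obstacle, in both directions, is upgrading the bijection to a homeomorphism, and in (1) $\Rightarrow$ (2) the additional task of proving that $T$ is special. The crux is that the standard basic sets $[t,F]$ of $\mathcal{R}(T)$ must correspond exactly to the basic open sets $\Omega(\epsilon, F)$ of the end space; this forces the recursion to be set up so that each region $R_t$ is (the trace of) a basic open set and so that the children of a node reflect the finitely-many-components structure of a single separator, while the $F$ in $[t,F]$ accounts for the finitely many tops split off at a limit. Controlling, along every transfinite branch, that the intersection of regions neither loses an end nor retains two of them, and that no ray of $T$ fails to arise from an end, is where the careful choice of separators — and the resulting special-ness — does the real work.
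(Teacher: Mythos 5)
First, a point of context: the paper does not prove this statement at all --- it is imported verbatim from \cite{kurkofka2024representationtheoremendspaces}, and the authors explicitly describe its proof as ``highly nontrivial'' (this is their stated reason for also giving a representation-free proof of Proposition~\ref{proposition:extentGraphs}). So there is no in-paper argument to compare yours against; your proposal has to stand on its own as a proof of the Kurkofka--Pitz theorem, and as it stands it is a plan rather than a proof: every genuinely hard step is named and then deferred.

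The most concrete gap is that you never engage with the definition of a \emph{special} order tree, which is the entire content of the theorem --- the ray space of an arbitrary order tree need \emph{not} be an end space, so both directions live or die on exactly how specialness is formulated and used. In your (2)$\Rightarrow$(1) construction this surfaces immediately at nodes of limit height: an immediate successor $s$ of a limit node $t$ has no immediate predecessor among your vertices, so $v_s$ must be attached to the graph by edges running down into a cofinal subset of $\odownward{t}$; but attaching infinitely many such $v_s$ (there may be uncountably many tops of a single ray $x$, each carrying its own subtree) in a way that (i) does not create dominating vertices or merge ends, and (ii) makes all but finitely many of those subtrees' ends fall into every basic neighbourhood $[t,F]$ of $x$, is precisely the step that requires the special structure and that your sketch waves at with ``this is what specialness provides.'' Similarly, in (1)$\Rightarrow$(2) the transfinite refinement by finite separators is a reasonable opening move, but the claims that each branch of the limit tree captures exactly one end, that no spurious rays of $T$ appear, that the standard basic sets $[t,F]$ match the sets $\Omega(\epsilon,F)$, and that the resulting tree is special are each substantial verifications (the last one in particular has no argument attached). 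Your own closing paragraph concedes that this is ``where the careful choice of separators --- and the resulting special-ness --- does the real work''; that work is the theorem, and it is not carried out here.
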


\subsection{Main results and structure of this paper}\label{subsec:mainresults}

In Section~2, we provide combinatorial characterizations of the Lindelöf degree and the extent of end spaces of graphs.
Namely:

\begin{theorem*}[\ref{theorem:lindelof}]
    Let $G$ be a graph.
    Then the end space $\Omega(G)$ is Lindelöf if, and only if, for every finite set $F\subseteq V(G)$, at most countable many components of $G\setminus F$ contains rays.
\end{theorem*}

\begin{theorem*}[\ref{theorem:lindelofDegree}]
    Let $G$ be a graph and $\kappa$ be an infinite cardinal.
    Then $L(\Omega(G))\leq \kappa$ if, and only if, for every finite set $F\subseteq V(G)$, at most $\kappa$ many components of $G\setminus F$ contains rays.
\end{theorem*}

We dedicate Section~3 to show the following results:

\begin{proposition*}[\ref{PROP_RaySpacesAreDSpaces}]
Every ray space is a $D$-space.
\end{proposition*}
\begin{corollary*}[\ref{PROP_EndSpacesAreDSpaces}]
Every end space is a $D$-space.
\end{corollary*}

In Section~4, we provide  combinatorial characterizations of the Rothberger property for end spaces of graphs and ray spaces of trees. 
Namely:

\begin{theorem*}[\ref{THM_RaySpaceRothberger}]
Let \( T \) be a rooted tree. The following are equivalent:
\begin{itemize}
    \item[(a)] \(\mathcal{R}(T)\) is Rothberger.
    \item[(b)] $\mathcal{R}(T)$ is Lindelöf and contains no topological copy of the Cantor space.
    \item[(c)] $\mathcal{R}(T)$ is Lindelöf and $T$ contains no subset which is order-isomorphic to the binary tree $2^{<\omega}$.
    \item[(d)] $\mathcal R(T)$ is Lindelöf and scattered.
\end{itemize}

Moreover, if $T$ is a pruned tree, then the ``Lindel\"of'' condition in all items may be swapped by ``every node has at most countably many successors''.
\end{theorem*}

\begin{corollary*}[\ref{COR_RothbergerEnds}]
Let $G$ be a graph. The following are equivalent:
\begin{enumerate}[label=(\alph*)]
    \item $\Omega(G)$ is Rothberger.

    \item $\Omega(G)$ is Lindel\"of and does not contain a copy of the Cantor space.
    
    \item $\Omega(G)$ is Lindel\"of and scattered.
    
    \item $\Omega(G)$ is Lindel\"of and $G$ contains no end-faithful subgraph which is a subdivision of the binary tree.

    \item For every $\subseteq$-increasing sequence $(F_n:n\in\omega)$ of finite subsets of $\mathrm{V}(G)$ there is a sequence $(C_n:n\in\omega)$, where each $C_n$ is a connected component of $G\setminus F_n$, so that every ray of $G$ has a tail in $C_n$ for some $n\in\omega$.

\end{enumerate}
\end{corollary*}

Using the results shown in the previous section, we present in Section 5 the following combinatorial characterizations of graphs whose end spaces have countably many ends: 

\begin{corollary*}[\ref{COR_CountableEnds_SeqFin}]
    A graph $G$ has countably many ends if, and only if, all the following conditions hold:
    \begin{enumerate}[label=(\roman*)]
        \item For every $\subseteq$-increasing sequence $(F_n:n\in\omega)$ of finite subsets of $\mathrm{V}(G)$ there is a sequence $(C_n:n\in\omega)$, where each $C_n$ is a connected component of $G\setminus F_n$, so that every ray of $G$ has a tail in $C_n$ for some $n\in\omega$.
        \item For every ray $R$ in $G$ there exists a $\subseteq$-increasing sequence $(F_n:n\in\omega)$ of finite subsets of $\mathrm{V}(G)$ such that, for every ray $R'$ in $G$ which is not equivalent to $R$, there exists an $n\in\omega$ such that $R$ and $R'$ lie in different connected components of $G\setminus F_n$.
    \end{enumerate}
\end{corollary*}

\begin{corollary*}[\ref{COR_CountableEnds_BinTree}]
    A graph $G$ has countably many ends if, and only if, all the following conditions hold:
    \begin{enumerate}[label=(\roman*)]
        \item $G$ contains no end-faithful subgraph which is a subdivision of the binary tree.
        \item For every ray $R$ in $G$ there exists a $\subseteq$-increasing sequence $(F_n:n\in\omega)$ of finite subsets of $\mathrm{V}(G)$ such that, for every ray $R'$ in $G$ which is not equivalent to $R$, there exists an $n\in\omega$ such that $R$ and $R'$ lie in different connected components of $G\setminus F_n$.
    \end{enumerate}
\end{corollary*}

In Section~6, we discuss some partial results regarding a combinatorial characterization of the Menger property.
In order to do so, we rely on the following notion:

\begin{definition*}[\ref{DEF_K-BaireRank}]
    Suppose $T$ is a rooted tree and $S\subseteq T$ is a subtree. We will say that $t\in T$ is \emph{$S$-compactly trivial} if every $s\in S$ above $t$ has finitely many successors in $S$.

    Moreover, we recursively define, for each ordinal $\alpha$, a subtree $\partial_K^\alpha(T)$ of $T$ as follows:
    \begin{itemize}
        \item $\partial_K^0(T)=T$;
        \item If $\partial_K^\alpha(T)$ has been defined, let $\partial_K^{\alpha+1}(T)$ be the subtree of $\partial_K^\alpha(T)$ obtained by removing all $\partial_K^\alpha(T)$-compactly trivial nodes from $\partial_K^\alpha(T)$;
        \item If $\beta$ is a limit ordinal and $\partial_K^\alpha(T)$ has been defined for all $\alpha<\beta$, let $\partial_K^\beta(T) = \bigcap_{\alpha<\beta} \partial_K^\alpha(T)$.
    \end{itemize}
\end{definition*}

With it, we thus obtain the following characterizations:

\begin{theorem*}[\ref{THM_RayMengerCharacterization}]
    For every rooted tree \( T \), the following are equivalent:
    \begin{enumerate}[label=(\alph*)]
        \item $\mR(T)$ is $\sigma$-compact.
        \item $\mathcal{R}(T)$ is Menger.
        \item $\mR(T)$ is Lindel\"of and there is no order embedding $\varphi\colon \omega^{<\omega}\to T$ such that for every $s\in \omega^{<\omega}$ there is a $t_s\in T$ with $t_s\geq\varphi(s)$ for which, for every $n\in\omega$, $\varphi(s^\smallfrown n)$ is a successor of $t_s$.
        \item $\mathcal{R}(T)$ is Lindel\"of and does not contain a closed topological copy of the Baire space $\omega^\omega$. 
        \item $\mathcal{R}(T)$ is Lindel\"of and, for every nonempty closed $X\subseteq \mR(T)$, there is an $x\in X$ with an open neighborhood $V_x$ which is compact in $X$.
        \item $\mathcal{R}(T)$ is Lindel\"of and there exists an ordinal $\gamma$ such that $\partial_K^\gamma(T)= \emptyset$.
    \end{enumerate}

    Moreover, if $T$ is a pruned tree, then the ``Lindel\"of'' condition in all items may be swapped by ``every node has at most countably many successors''.
\end{theorem*}

\begin{corollary*}[\ref{COR_EndMengerCharacterization}]
    If \( G \) is a graph, then the following are equivalent:
    
    \begin{enumerate}[label=(\alph*)]
        \item $\Omega(G)$ is $\sigma$-compact.
        \item $\Omega(G)$ is Menger.
        \item $\Omega(G)$ is Lindelöf and does not contain a closed topological copy of the Baire space $\omega^\omega$.
    \end{enumerate}
\end{corollary*}

In Section~7, we adapt our results to edge-end spaces of graphs.
More precisely, we show the following results:

\begin{corollary*}[\ref{COR_RothEdgeEnds}]
    Let $G$ be a graph. Then, the following are equivalent:
    \begin{enumerate}[label=(\alph*)]
        \item $\Omega_E(G)$ is Rothberger.

        \item $\Omega_E(G)$ is Lindelöf and does not contain a copy of the Cantor space.

        \item $\Omega_E(G)$ is Lindelöf and scattered.

        \item For every strictly $\subseteq$-increasing sequence $(F_n:n\in\omega)$ of finite subsets of $\mathrm{E}(G)$ there is a sequence $(C_n:n\in\omega)$, where each $C_n$ is a connected component of $G\setminus F_n$, so that every ray of $G$ has a tail in $C_n$ for some $n\in\omega$.
    \end{enumerate}
\end{corollary*}

\begin{corollary*}[\ref{COR_CountableEdgeEnds}]
    A graph $G$ has countably many edge-ends if, and only if, all the following conditions hold:
    \begin{enumerate}[label=(\roman*)]
        \item For every $\subseteq$-increasing sequence $(F_n:n\in\omega)$ of finite subsets of $\mathrm{E}(G)$ there is a sequence $(C_n:n\in\omega)$, where each $C_n$ is a connected component of $G\setminus F_n$, so that every ray of $G$ has a tail in $C_n$ for some $n\in\omega$.
        \item For every ray $R$ in $G$ there exists a $\subseteq$-increasing sequence $(F_n:n\in\omega)$ of finite subsets of $\mathrm{E}(G)$ such that, for every ray $R'$ in $G$ which is not edge-equivalent to $R$, there exists an $n\in\omega$ such that $R$ and $R'$ lie in different connected components of $G\setminus F_n$.
    \end{enumerate}
\end{corollary*}

\begin{corollary*}[\ref{COR_MengerEdgeEnds}]
    Let $G$ be a graph. Then, the following are equivalent:
    \begin{enumerate}[label=(\alph*)]
        \item $\Omega_E(G)$ is $\sigma$-compact.
        \item $\Omega_E(G)$ is Menger.
        \item $\Omega_E(G)$ is Lindelöf and does not contain a closed copy of the Baire space.
    \end{enumerate}
    Moreover, $\Omega_E(G)$ is a $D$-space.
\end{corollary*}

Moreover, by letting $\rmt(G)$ denote the set of vertices of $G$ which can be separated from any given $G$-ray by finitely many edges, we have also shown: 

\begin{corollary*}[\ref{COR_LindelofEdgeEnds}]
    Let $G$ be a graph. 
    Then, $L(\Omega_E(G))\le\kappa$ if, and only if, for every finite $F\subset \rmt(G)$, $|\rmC(G,F)|\le \kappa$.
\end{corollary*}

Finally, in Section~8, we conclude our paper with some final remarks and questions.
\section{On the Lindelöf property}

Recall that a topological space $X$ is \emph{compact} if every open cover of $X$ has a finite subcover.
Compactness is, perhaps, the most studied covering property in topology.
It plays a central role in analysis and topology, as many important theorems are related to compactness, such as the Heine–Borel theorem, Tychonoff's theorem, and the extreme value theorem.

A weaker property than compactness is the \emph{Lindelöf property}, which asserts that every open cover of $X$ admits a countable subcover.
In metrizable spaces, the Lindelöf property is equivalent to separability -- that is, the existence of a countable dense subset—as well as to the existence of a countable base for the topology.
However, for general topological spaces, and in particular for end spaces, these properties may differ and are not necessarily equivalent.

The following is a well-known combinatorial characterization of compactness for end spaces.

\begin{theorem}[{\cite[Corollary~4.4]{diestel2006end}}]
    For every graph $G$, its end space $\Omega(G)$ is compact if, and only if, for every finite set $F\subseteq V(G)$, only finitely many components of $G\setminus F$ contains rays.
\end{theorem}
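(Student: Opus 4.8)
The whole argument will rest on one structural observation: for each finite $F\subseteq\rmV(G)$ the family $\set{\Omega(C,F):C\in\rmC(G,F)}$ is a partition of $\Omega(G)$ into clopen sets. I would first verify that it covers $\Omega(G)$: given $\epsilon\in\Omega(G)$ and any $R\in\epsilon$, a tail of $R$ is a ray of $\epsilon$ lying in $G(\epsilon,F)$, so $\epsilon\in\Omega(G(\epsilon,F),F)$ with $G(\epsilon,F)\in\rmC(G,F)$. Disjointness is immediate from the definition of ends: if an end had rays inside two distinct components $C\neq C'$ of $G\setminus F$, these rays would have tails in different components of $G\setminus F$, contradicting their equivalence. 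Finally, each $\Omega(C,F)$ is open by definition and its complement is the union of the remaining $\Omega(C',F)$, hence also open; so every piece is clopen, with no finiteness assumption needed so far.

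The ``only if'' direction is then the contrapositive and is the easy half. Suppose some finite $F$ leaves infinitely many ray-containing components, i.e.\ $\rmC(G,F)$ is infinite. Each $\Omega(C,F)$ with $C\in\rmC(G,F)$ is nonempty, since the end of any ray witnessing $C\in\rmC(G,F)$ lies in it. Thus $\set{\Omega(C,F):C\in\rmC(G,F)}$ is an open cover of $\Omega(G)$ by infinitely many pairwise disjoint nonempty sets, and any subfamily omitting even one piece fails to cover the points of that piece. Hence there is no finite subcover and $\Omega(G)$ is not compact.

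For the ``if'' direction, assume every $\rmC(G,F)$ is finite. I would organize the clopen partitions into an inverse system: order the finite subsets of $\rmV(G)$ by inclusion, and for $F\subseteq F'$ let the bonding map $\rmC(G,F')\to\rmC(G,F)$ send a component of $G\setminus F'$ to the unique component of $G\setminus F$ containing it. Each term is finite and discrete, so $L:=\varprojlim_F\rmC(G,F)$ is compact by Tychonoff's theorem. The assignment $\Phi\colon\epsilon\mapsto\seq{G(\epsilon,F)}_F$ lands in $L$ because $G(\epsilon,F')\subseteq G(\epsilon,F)$ whenever $F\subseteq F'$; it is injective because distinct ends are separated by some finite $F$; and it is continuous and open onto its image because $\Phi^{-1}\set{l\in L:l_F=C}=\Omega(C,F)$ and $\Phi[\Omega(C,F)]=\set{l\in L:l_F=C}\cap\operatorname{im}\Phi$. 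Thus $\Phi$ is an embedding of $\Omega(G)$ as a dense subspace of the compact space $L$ (dense because every thread $(C_F)_F$ has each $\Omega(C_F,F)\neq\emptyset$), and therefore $\Omega(G)$ is compact precisely when $\Phi$ is onto.

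The main obstacle is exactly this surjectivity: realizing an arbitrary compatible thread $(C_F)_F$ of non-rayless components by a single end. Concretely one must produce a ray $R$ having a tail in $C_F$ for every finite $F$; its end $\epsilon$ then satisfies $G(\epsilon,F)=C_F$ for all $F$. For countable $G$ this is a routine König's-lemma construction along a cofinal chain $F_0\subseteq F_1\subseteq\cdots$ exhausting $\rmV(G)$, using that each $C_{F_n}$ is connected and contains a ray and building $R$ vertex by vertex through the nested connected sets $C_{F_0}\supseteq C_{F_1}\supseteq\cdots$. For general (possibly uncountable) $G$ the same conclusion is the statement that every ``direction'' of $G$ is realized by an end, which I would either invoke as a known fact or establish by a transfinite refinement of the same ray-threading idea. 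I expect this realization step to be the only genuinely delicate point; everything else is bookkeeping with the clopen partitions $\set{\Omega(C,F):C\in\rmC(G,F)}$.
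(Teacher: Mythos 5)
Your argument is correct, but note that the paper does not prove this statement at all: it is imported verbatim as Corollary~4.4 of \cite{diestel2006end}, so there is no internal proof to compare against. Judged on its own terms, your proof works. The clopen-partition observation and the ``only if'' direction are exactly right. For the ``if'' direction, the inverse-limit embedding $\Phi\colon\Omega(G)\to\varprojlim_F\rmC(G,F)$ is a clean and legitimate route: the bonding maps are well defined (a non-rayless component of $G\setminus F'$ sits inside a non-rayless component of $G\setminus F$), the sets $\Omega(C,F)=\{\epsilon: G(\epsilon,F)=C\}$ do form a basis of $\Omega(G)$, and injectivity follows from the definition of end-equivalence. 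You correctly isolate the one genuinely nontrivial step, namely that every compatible thread $(C_F)_F$ is realized by an end; this is precisely the statement that every direction of $G$ is induced by an end, which is the Diestel--K\"uhn theorem that this paper records as Theorem~\ref{theorem:directions} and uses elsewhere (e.g.\ in the proof of Lemma~\ref{lemma:mainLemma}). Invoking that result closes the gap, so your proof is complete modulo a standard citation; the only caveat is that if one insisted on a fully self-contained argument, the realization of directions by ends is where all the real work of the theorem lives, and for uncountable graphs it is not a routine K\"onig's-lemma construction.
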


We show, with the next theorem, that the Lindelöf property for $\Omega(G)$ is similarly governed by the number of components containing rays after removing finite sets of vertices.

\begin{theorem}\label{theorem:lindelof}
    Let $G$ be a graph.
    Then the end space $\Omega(G)$ is Lindelöf if, and only if, for every finite set $F\subseteq V(G)$, at most countable many components of $G\setminus F$ contains rays.
\end{theorem}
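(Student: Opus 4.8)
For necessity, fix a finite $F\subseteq\rmV(G)$ and consider the family $\set{\Omega(C,F):C\in\rmC(G,F)}$. Each such set equals $\Omega(\epsilon,F)$ for any $\epsilon$ with $G(\epsilon,F)=C$, so it is open; its complement is the union of the remaining $\Omega(C',F)$, so it is in fact clopen. These sets are pairwise disjoint (if $\epsilon\in\Omega(C,F)\cap\Omega(C',F)$ then $C=G(\epsilon,F)=C'$) and they cover $\Omega(G)$, since a tail of any ray of $\epsilon$ avoids $F$ and lies in a single component, which is therefore ray-containing. Thus they form a partition of $\Omega(G)$ into nonempty clopen sets, i.e.\ an open cover admitting no proper subcover; if $\Omega(G)$ is Lindelöf this forces $\abs{\rmC(G,F)}\le\aleph_0$.

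For sufficiency I would argue by contradiction. First reduce to $G$ connected: $\rmC(G,\emptyset)$ is countable, $\Omega(G)$ is the disjoint clopen union of the countably many $\Omega(C,\emptyset)$, and a countable union of Lindelöf subspaces is Lindelöf. Now suppose $\Omega(G)$ has an open cover with no countable subcover; refining each member through the definition of the topology, I may assume the cover consists of basic clopen sets and still has no countable subcover. Call a clopen set $\Omega(C,F)$ \emph{small} if it is contained in the union of countably many cover members, and \emph{large} otherwise; note $\Omega(G)$ is large. The engine of the construction is that, for finite $F'\supseteq F$, one has $\Omega(C,F)=\bigsqcup\set{\Omega(C',F'):C'\in\rmC(G,F'),\ C'\subseteq C}$, a \emph{countable} disjoint union by hypothesis; hence if $\Omega(C,F)$ is large, at least one subregion $\Omega(C',F')$ is large (a countable union of small sets is small). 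Iterating, I would build an increasing sequence of finite sets $F_0\subseteq F_1\subseteq\cdots$ and a decreasing sequence of ray-components $C_0\supseteq C_1\supseteq\cdots$, with $C_n\in\rmC(G,F_n)$ and each $\Omega(C_n,F_n)$ large, hence nonempty.

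The final step, and the main obstacle, is to convert this nested sequence into a contradiction. One wants a single end $\epsilon$ with $G(\epsilon,F_n)=C_n$ for all $n$: such an $\epsilon$ lies in some cover member $U$, so $\Omega(\epsilon,H)\subseteq U$ for a finite $H$, and if $H\subseteq F_n$ for some $n$ then $\Omega(C_n,F_n)=\Omega(\epsilon,F_n)\subseteq U$, contradicting largeness. Two points must be secured, and both are delicate precisely because $\rmV(G)$ may be uncountable. First, the nested components must actually thread through a common end rather than escape to infinity: since $\aleph_0$-branching defeats König's lemma, this does not come for free and must be arranged during the construction (for instance by committing, at each stage, to prolonging a fixed ray inside $C_n$). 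Second, the separators $F_n$ must be chosen cofinally enough to eventually absorb the separator $H$ of the limiting end — equivalently, so that $\set{\Omega(C_n,F_n)}_n$ is a neighbourhood basis of $\epsilon$ — which is immediate when $G$ is countable but needs care otherwise.

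The clean way to guarantee both is to pass first to a countable ``core'' of the graph: I would show that the hypothesis forces $\rmV(G)$ to be a countable union of dispersed sets, invoke Jung's theorem to obtain a normal spanning tree $T$, and then carry out the construction along the branches of $T$, where separators can be taken to be down-closed initial segments $\downward{t}$ and the threading end is literally an infinite branch. I expect that verifying that the components-countability hypothesis yields such a normal spanning tree — equivalently, the countable-union-of-dispersed-sets condition — will be the technical heart of the argument.
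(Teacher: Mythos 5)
Your necessity argument is correct and coincides with the paper's. The sufficiency direction, however, breaks down at exactly the point you identify as the technical heart: the claim that the hypothesis forces $\rmV(G)$ to be a countable union of dispersed sets (equivalently, by Jung's theorem, that $G$ admits a normal \emph{spanning} tree) is false. The paper's own Figure~1 example is a counterexample: take uncountably many pairwise disjoint rays whose initial vertices form a complete graph. Deleting any finite set of vertices leaves at most finitely many ray-containing components, so the hypothesis holds (the end space is even compact, hence Lindelöf); yet no infinite subset $H$ of the clique is dispersed, because the end determined by a ray running inside the clique has $G(\epsilon,F)\cap H\neq\emptyset$ for every finite $F\subseteq H$. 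Consequently a countable union of dispersed sets meets the clique in a countable set and cannot cover $\rmV(G)$. Equivalently, a normal spanning tree would have to contain the uncountable clique as a chain of its tree order (an edge of $G$ between two tree vertices is a $T$-path, so normality forces comparability), which is impossible since every vertex of a graph-theoretic rooted tree has finite height and a chain meets each height at most once. Since both delicate points you list (threading the nested components through a single end, and making the separators $F_n$ absorb the eventual $H$) are deferred to this spanning-tree step, they remain unresolved.

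The paper's route around this obstacle (Lemma~\ref{lemma:mainLemma}) is to build a rayless normal tree $T$ that is \emph{not} spanning but has controlled size and is adapted to the cover: declaring a set ``bounded'' if it is contained in some $G(\epsilon,F_\epsilon)$, one shows via the theory of directions (Theorem~\ref{theorem:directions}) that every unbounded component can be split by a finite vertex set into either no unbounded components or at least two --- this is the rigorous form of your small/large dichotomy, since ``always exactly one unbounded piece'' would define a direction and hence an end $\epsilon$ whose own $F_\epsilon$ bounds it. Raylessness of the resulting tree then substitutes for König's lemma: a ray in $T$ would thread the successive splittings and trap an unbounded set inside some $G(\epsilon,F_\epsilon)$, a contradiction; and normality plus raylessness (Lemma~\ref{lemma:normalRaylessNhood}) guarantees that every component of $G\setminus T$ is cut off by a finite subset of $T$, which yields the countable refinement of the original cover. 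To complete your argument you would need to replace the normal spanning tree step by a construction of this kind.
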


In this section, we actually prove a more general version of the result above.
Recall that, for a topological space $X$, the \emph{Lindelöf degree} $L(X)$ is the least infinite cardinal $\kappa$ such that every open cover of $X$ has a subcover of cardinality at most $\kappa$.
In particular, $X$ is Lindelöf if, and only if, $L(X)=\aleph_0$.
For background on the Lindelöf degree and other cardinal functions, see, for example, \cite{hodel1984cardinal}.

\begin{theorem}\label{theorem:lindelofDegree}
    Let $G$ be a graph and $\kappa$ be an infinite cardinal.
    Then $L(\Omega(G))\leq \kappa$ if, and only if, for every finite set $F\subseteq V(G)$, at most $\kappa$ many components of $G\setminus F$ contains rays.
\end{theorem}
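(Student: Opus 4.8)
The two implications are of very different character, so I would handle them separately, relying throughout on the elementary observation that for a fixed finite $F\subseteq V(G)$ the family $\{\Omega(C,F): C\in\rmC(G,F)\}$ is a partition of $\Omega(G)$ into nonempty \emph{clopen} sets, where $\Omega(C,F)=\{\epsilon\in\Omega(G): G(\epsilon,F)=C\}$, and this partition has exactly $|\rmC(G,F)|$ pieces.

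For the necessity (contrapositive) direction, suppose some finite $F$ satisfies $|\rmC(G,F)|>\kappa$. Then the partition above is an open cover of $\Omega(G)$ by more than $\kappa$ pairwise disjoint nonempty sets. Since the pieces are pairwise disjoint, deleting any one of them from the cover leaves its points uncovered, so the only subcover is the whole family; hence no subcover has size $\le\kappa$ and $L(\Omega(G))>\kappa$. (Equivalently, picking one end from each piece yields a closed discrete subspace of size $>\kappa$, which also lower-bounds the extent.) This direction is routine and I expect it to be short.

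For the sufficiency direction I would use an elementary submodel argument. Fix a large regular $\theta$ and take $M\prec H(\theta)$ with $G,\mU\in M$, $|M|=\kappa$, and $\kappa+1\subseteq M$. First reduce $\mU$ to basic sets: since the sets $\Omega(C,F)$ form a base, the family $\mP=\{(F,C): C\in\rmC(G,F),\ \Omega(C,F)\subseteq U\text{ for some }U\in\mU\}$ is definable from $G,\mU$, hence $\mP\in M$, and $\{\Omega(C,F):(F,C)\in\mP\}$ covers $\Omega(G)$ and refines $\mU$; a subcover of size $\le\kappa$ from these blobs yields one of size $\le\kappa$ from $\mU$. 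The crucial consequence of the hypothesis is that for every \emph{finite} $F\in M$ we have $\rmC(G,F)\in M$ and $|\rmC(G,F)|\le\kappa$, so (as $\kappa\subseteq M$) in fact $\rmC(G,F)\subseteq M$; consequently $G(\epsilon,F)\in M$ and every blob $\Omega(C,F)$ with $C\in\rmC(G,F)$ lies in $M$, for \emph{every} end $\epsilon$ and every finite $F\in M$. In particular $|\mU\cap M|\le\kappa$, so it remains to show that $\mU\cap M$ covers $\Omega(G)$: equivalently, that each end $\epsilon$ lies in $\Omega(C,F)$ for some $(F,C)\in\mP\cap M$, i.e.\ that \emph{some $M$-level basic neighborhood $\Omega(G(\epsilon,F),F)$, $F\in M$, is contained in a single member of $\mU$}.

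This last step is where I expect the real difficulty. The naive hope is that the $M$-level neighborhoods $\{\Omega(G(\epsilon,F),F):F\in M\text{ finite}\}$ form a neighborhood base at $\epsilon$, which would let one simply reflect the finite separator witnessing $\epsilon\in U$ into $M$. This can fail: for $G=K_{\aleph_1}$ the unique end cannot be separated from any vertex outside $M$ by finite sets in $M$, so the $M$-level neighborhoods never shrink below the whole space. The point is that such failure only occurs in ``fat'' components, which by the hypothesis carry very few ends, so the coarse neighborhood is already subordinate to the cover. I would therefore analyze the \emph{coherence class} $E_\epsilon=\bigcap_{F\in M\text{ finite}}\Omega(G(\epsilon,F),F)$, a closed set containing $\epsilon$, and prove that $E_\epsilon$ is covered by at most $\kappa$ members of $\mU\cap M$; since $\rmC(G,F)\subseteq M$ for $F\in M$, the entire end-branching of $\Omega(G)$ along $M$-levels is captured inside $M$, which is exactly what should make such a covering reflect. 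Verifying this covering of the coherence class (a compactness/counting property tying the behavior of $E_\epsilon$ to the bound $|\rmC(G,F)|\le\kappa$, rather than to separability of individual vertices) is, I believe, the main technical obstacle, and the part of the argument that genuinely uses the hypothesis beyond the bookkeeping fact $\rmC(G,F)\subseteq M$.
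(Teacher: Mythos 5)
Your necessity direction is correct and coincides with the paper's: the clopen partition $\{\Omega(C,F):C\in\rmC(G,F)\}$ admits no proper subcover, so more than $\kappa$ pieces forces $L(\Omega(G))>\kappa$. The problem is the sufficiency direction, where your proposal stops exactly at the point where the theorem's content lies. You set up the elementary submodel $M$, correctly observe that $\rmC(G,F)\subseteq M$ for finite $F\in M$, correctly diagnose that the $M$-level neighborhoods $\Omega(G(\epsilon,F),F)$ with $F\in M$ need not form a base at $\epsilon$ (the $K_{\aleph_1}$ example is apt), and then declare that the covering of the ``coherence class'' $E_\epsilon=\bigcap_{F\in M}\Omega(G(\epsilon,F),F)$ by $\mU\cap M$ is ``the main technical obstacle.'' That obstacle is the theorem. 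Nothing in your write-up supplies an argument for it, and the intuition you offer --- that failure of separation ``only occurs in fat components, which by the hypothesis carry very few ends'' --- is not substantiated and is not obviously true: the hypothesis bounds the number of ray-containing components of $G\setminus F$, not the number of ends living in a single component, and a single component surviving all $M$-level separators can still carry many ends and meet many members of $\mU$, none of which need lie in $M$. As stated, the claim ``$E_\epsilon$ is covered by at most $\kappa$ members of $\mU\cap M$'' is essentially a restatement of the reflection you are trying to prove.

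For comparison, the paper does not reflect the cover into a submodel; it builds the subcover directly. Given the assignment $\epsilon\mapsto F_\epsilon$ of finite separators subordinating $\mU$, Lemma~\ref{lemma:mainLemma} constructs a rayless normal tree $T$ with $|T|\leq\kappa$ such that every component of $G\setminus T$ is contained in some $G(\epsilon,F_\epsilon)$; Lemma~\ref{lemma:normalRaylessNhood} then converts each such component into a basic open set $\Omega(C,F_C)$ with $F_C\subseteq T$ finite, giving a refinement of size $\leq\kappa$. The substance is in Lemma~\ref{lemma:mainLemma}: using the theory of directions (Theorem~\ref{theorem:directions}) one shows that every ``unbounded'' component can be split by a finite set into either no unbounded pieces or at least two, and iterating this $\omega$ times while keeping the tree normal and of size $\leq\kappa$ forces, via a separate ray-chasing argument, that the limit tree is rayless. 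This dichotomy-and-raylessness argument is precisely the machinery your coherence-class step would have to replace, and your proposal contains no substitute for it. If you want to pursue the submodel route you would still need an analogue of the direction/splitting analysis to show that the trace of an arbitrary end on $M$ is eventually captured by a single member of $\mU\cap M$; until that is done the proof is incomplete.
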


Of course, Theorem~\ref{theorem:lindelofDegree} implies Theorem~\ref{theorem:lindelof} by putting $\kappa=\aleph_0$. 
To prove it, will need the following basic fact about normal trees, which is proved for the sake of completeness.
\begin{lemma}\label{lemma:normalSubtree}
    A subtree of a normal tree is normal.
\end{lemma}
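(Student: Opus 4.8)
The plan is to reduce the normality of the subtree $T'$ to the separation property of the ambient normal tree $T$, exploiting the fact that the relevant separators are already forced to live inside $T'$.

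First I would check that the tree order of $T'$ is nothing more than the restriction of the tree order of $T$. For $x,y\in T'$, the remark following the definition of the tree order gives $\odownward{x}=\{z\in T:z<x\}=\{z\in T':z<x\}$, and likewise for $y$. Since $x\leq y$ holds iff $x\in\downward{y}=\odownward{y}\cup\{y\}$, the two orders agree on $T'$; in particular, two points of $T'$ are incompatible in the tree order of $T'$ if and only if they are incompatible in that of $T$. This step also records the key auxiliary fact that $\odownward{x}\subseteq T'$ for every $x\in T'$.

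Next I would fix distinct $x,y\in T'$ that are incompatible in $T'$, hence in $T$, and argue by contradiction. By the well-known separation lemma for normal trees stated above, $\odownward{x}\cap\odownward{y}$ separates $x$ and $y$ in $G$, and by the previous paragraph this separator is contained in $T'$. Suppose now that $P=(x_0,\dots,x_n)$ were a $T'$-path from $x$ to $y$. Since $P$ is a path from $x$ to $y$ in $G$, it must meet the separator $\odownward{x}\cap\odownward{y}$ at some vertex $w$; as $w<x$ and $w<y$ we have $w\notin\{x,y\}$, so $w$ is an \emph{internal} vertex of $P$. But $w\in\odownward{x}\cap\odownward{y}\subseteq T'$, which contradicts the defining property of a $T'$-path that its internal vertices avoid $T'$. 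Hence no such $T'$-path exists, and $T'$ is normal.

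The subtle point to watch for is that a $T'$-path need not be a $T$-path: its internal vertices avoid $T'$ but may well lie in $T\setminus T'$, so one cannot directly feed $P$ to the normality of $T$ as a forbidden path between incompatible vertices of $T$. This is precisely why I route the argument through the separation lemma rather than through $T$-paths: the separator $\odownward{x}\cap\odownward{y}$ is compelled to sit inside $T'$, so every $x$–$y$ path in $G$ is forced to touch $T'$ at an internal vertex, which is exactly what a $T'$-path is not allowed to do.
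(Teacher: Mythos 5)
Your proof is correct and follows essentially the same route as the paper's: both reduce to the separation lemma for normal trees, observe that $\odownward{x}\cap\odownward{y}$ lies in $T'$, and conclude that a $T'$-path from $x$ to $y$ would have to meet this separator inside $T'$, which is impossible. Your write-up is merely more explicit about the agreement of the two tree orders and about why the meeting point must be an internal vertex of the path.
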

\begin{proof}
    Let $T$ be a normal tree and $T'$ be a subtree of $T$.

    Let $x, y \in T'$ be distinct points incompatible in the tree order of $T'$.
    Then they are also incompatible in the tree order of $T$. Thus, $F=\odownward{x}\cap \odownward{y}\subseteq T'$ separates $x$ and $y$ in $G$.
    This implies that there is no $T'$-path $P$ from $x$ to $y$, as it would have to intersect $F\subseteq T'$.
\end{proof}
The following theorem is central in the theory of normal trees.
Recall that a subgraph of $G$ is \emph{rayless} if it does not contain any rays.
\begin{theorem}[\cite{jung1969wurzelbaume}]\label{theorem:jung}
    Let $G$ be a connected graph.
    Every rayless normal tree $T$ in $G$ is dispersed.
    Moreover, given a rayless normal tree $(T, r)$ in $G$ and a dispersed set $H\subseteq V(G)$, there exists a rayless normal tree $(T', r)$ in $G$ containing $T$ and $H$.
\end{theorem}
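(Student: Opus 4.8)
The plan is to prove the two assertions by quite different means. The first, that a rayless normal tree $T$ is dispersed, I would establish by contraposition: from an end witnessing $\partial_{\Omega}(V(T))\neq\emptyset$ I would extract a ray inside $T$. The key tool is a structural consequence of the separation property of normal trees recalled above: for each $t\in T$, the component of $G\setminus\odownward{t}$ containing $t$ meets $T$ in exactly $\{s\in V(T):s\geq t\}$. Indeed, if $s\in V(T)$ lies in that component then $s\notin\odownward{t}$, and $s$ cannot be incompatible with $t$, since then $\odownward{s}\cap\odownward{t}\subseteq\odownward{t}$ would already separate $s$ from $t$; hence $s\geq t$, with $s>t$ unless $s=t$.

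Granting this, suppose $\varepsilon\in\partial_{\Omega}(V(T))$. Starting from $t_{0}=r$, where $\odownward{t_{0}}=\emptyset$ and $G(\varepsilon,\emptyset)=G$, I would recursively build a strictly increasing sequence $t_{0}<t_{1}<\cdots$ in $T$ with each $t_{n+1}$ a child of $t_{n}$, maintaining the invariant $t_{n}\in G(\varepsilon,\odownward{t_{n}})$. At step $n$ I set $F=\downward{t_{n}}$; since $\varepsilon\in\partial_{\Omega}(V(T))$ there is some $s\in G(\varepsilon,F)\cap V(T)$, and as $G(\varepsilon,F)\subseteq G(\varepsilon,\odownward{t_{n}})$ the structural fact above forces $s>t_{n}$. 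Letting $t_{n+1}$ be the child of $t_{n}$ on the tree-path up to $s$, one has $\odownward{t_{n+1}}=F$ and, since the upward tree-path from $t_{n+1}$ to $s$ avoids $\downward{t_{n}}$, also $t_{n+1}\in G(\varepsilon,F)$, so the invariant survives. The resulting chain is a ray in $T$, contradicting raylessness.

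For the extension I would build $T'$ by transfinite recursion, starting from $T'_{0}=T$ and taking unions at limit stages. At a successor stage, for each component $D$ of $G\setminus V(T'_{\alpha})$ that still contains a vertex of $H$ I proceed as follows. Normality forces the set $N(D)$ of neighbours of $D$ in $T'_{\alpha}$ to be a chain, for an incompatible pair in $N(D)$ would be joined by a $T'_{\alpha}$-path through the connected set $D$; since $T'_{\alpha}$ is rayless this chain is finite and has a maximum $t_{D}$. I then attach, as a branch above $t_{D}$, a single $t_{D}$--$h$ path with interior in $D$, where $h\in H\cap D$ is chosen. Adding one such path per component keeps the tree normal: any $T'$-path leaving $D$ must cross $N(D)\subseteq\downward{t_{D}}$ in its interior, which is impossible, and distinct components cannot be joined off $V(T'_{\alpha})$. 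The vertex sets increase inside the fixed set $V(G)$, so the recursion stabilises at some stage $\lambda$, at which no component of $G\setminus V(T'_{\lambda})$ meets $H$; thus $T':=T'_{\lambda}$ is a normal tree containing $V(T)\cup H$.

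The crux --- and the only place the hypothesis that $H$ is dispersed is used --- is showing that $T'$ is rayless, which I would prove by transfinite induction on the stages. Throughout the construction I maintain the invariant that every vertex of $T'_{\alpha}\setminus V(T)$ has a vertex of $H$ above it in the tree order, which holds because each added path terminates at its chosen $h\in H$. Successor stages extend a rayless tree by finite paths and so remain rayless; the danger is at limit stages, where a chain of rayless trees may acquire an infinite branch. Suppose then that $T'_{\lambda}$ had a branch $\rho_{0}<\rho_{1}<\cdots$, which is a ray determining an end $\varepsilon$. Since $T$ is rayless, cofinally many $\rho_{i}$ lie off $V(T)$, so by the invariant I may pick $h_{i}\in H$ with $h_{i}\geq\rho_{i}$; each $h_{i}$ then lies in $G(\varepsilon,\odownward{\rho_{i}})$. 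Given any finite $F\subseteq H$, the separation property shows that the nested sets $G(\varepsilon,\odownward{\rho_{i}})$ are eventually disjoint from $F$ and hence eventually contained in $G(\varepsilon,F)$; thus $G(\varepsilon,F)\cap H\neq\emptyset$. This gives $\varepsilon\in\partial_{\Omega}(H)$, contradicting dispersedness. I expect this limit-stage argument to be the main obstacle, since it is where the end-space topology and the combinatorial hypothesis on $H$ must be reconciled; the remaining points --- normality at limit stages and the fact that every vertex of $H$ is eventually covered --- are routine.
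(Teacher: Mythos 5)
First, a point of reference: the paper does not prove this statement at all --- it is quoted as a known result of Jung with a citation, so there is no in-paper proof to compare against. Judged on its own terms, your proposal follows what is essentially the classical architecture for Jung's theorem: the first half (raylessness implies dispersedness) is correct --- the structural fact that the component of $G\setminus\odownward{t}$ containing $t$ meets the normal tree exactly in $\{s:s\geq t\}$ is right, and the recursive construction of the branch $t_0<t_1<\cdots$ with the invariant $t_n\in G(\varepsilon,\odownward{t_n})$ goes through. The extension construction is also sound: $N(D)$ is a chain by normality, finite by raylessness, the attached paths preserve normality and raylessness at successor stages, and the stabilisation argument is fine.

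There is, however, one step in the limit-stage raylessness argument that is wrong as stated. You claim that, for a finite $F\subseteq H$, ``the separation property shows that the nested sets $G(\varepsilon,\odownward{\rho_i})$ are eventually disjoint from $F$.'' The separation property only controls vertices \emph{of the tree}: if $v\in F\cap V(T'_\lambda)$ then indeed $v\in G(\varepsilon,\odownward{\rho_i})$ forces $v\geq\rho_i$, which fails for large $i$. But at an intermediate limit stage $H$ need not yet be contained in $V(T'_\lambda)$, and a vertex $v\in F\setminus V(T'_\lambda)$ that dominates the end $\varepsilon$ lies in $G(\varepsilon,\odownward{\rho_i})$ for \emph{every} $i$ (the sets $\odownward{\rho_i}$ only exhaust the branch itself), so the claimed disjointness can fail and with it the inclusion $G(\varepsilon,\odownward{\rho_i})\subseteq G(\varepsilon,F)$. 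The conclusion you want is nevertheless true, and the patch is short: instead of the sets $G(\varepsilon,\odownward{\rho_i})$, use the set $U_i=\{s\in V(T'_\lambda): s\geq\rho_i\}$, i.e.\ the subtree of $T'_\lambda$ above $\rho_i$. This set induces a connected subgraph of $G$, contains both $h_i$ and the tail $\rho_i\rho_{i+1}\cdots$ of the branch, and every element of it has at least $i$ strict predecessors; hence for $i$ large enough $U_i$ misses the finitely many tree vertices of $F$ (and trivially misses $F\setminus V(T'_\lambda)$), so $U_i\subseteq G(\varepsilon,F)$ and $h_i\in G(\varepsilon,F)\cap H$, giving $\varepsilon\in\partial_\Omega(H)$ and the desired contradiction with dispersedness. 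With that repair the proof is complete.
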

In particular, the previous theorem says that the dispersed sets are exactly the sets that are contained in a rayless normal tree.

We refine the previous result with the following observation, which will play a key role in the proof of Theorem~\ref{theorem:lindelofDegree}.

\begin{lemma}\label{lemma:normalTreeDispersed}
    Let $G$ be a graph and a dispersed set $H\subseteq V(G)$. Given a rayless normal tree $(T, r)$ in $G$ and a dispersed set $H\subseteq V(G)$, there exists a normal tree $(T', r)$ in $G$ such that:
    \begin{enumerate}[label=(\alph*)]
    \item $T, H\subseteq T'$.
    \item $(T', r)$ is minimal as a normal tree in the previous sense, that is, if $(T'', r)$ is a normal tree in $G$ such that $T, H\subseteq T''$ and such that $T''$ is a subgraph of $T'$, then $T''=T'$.
    \item If $H$ is finite, then $T'\setminus T$ is finite.
    \item If $H$ is infinite, then $|T'\setminus T|\leq|H|$.
    \end{enumerate}
\end{lemma}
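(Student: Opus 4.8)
The plan is to realize the desired tree as a \emph{downward closure} inside a suitable ambient tree, exploiting the fact that every subtree of a normal tree is again normal (\cref{lemma:normalSubtree}). Since $T$ is connected and rooted at $r$, it lies in the connected component $G_r$ of $r$; as any tree $(T',r)$ must likewise be connected and rooted at $r$, we may assume throughout that $G=G_r$ is connected (and in particular $H\subseteq V(G_r)$, as otherwise no tree rooted at $r$ could contain $H$). First I would apply \cref{theorem:jung} to fix a single \emph{rayless} normal tree $(T_0,r)$ in $G$ with $T\cup H\subseteq T_0$. The whole construction then takes place inside $T_0$, and the key observation is that a subset $S\ni r$ induces a connected subgraph of the tree $T_0$ if, and only if, $S$ is downward closed in the tree order of $T_0$: the unique $r$--$x$ path in $T_0$ has vertex set $\downward{x}$, so connectivity of $S$ forces $\downward{x}\subseteq S$ for each $x\in S$, and the converse is clear. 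Consequently every downward-closed subset of $T_0$ containing $r$ is a subtree of $(T_0,r)$, hence normal by \cref{lemma:normalSubtree}.

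Next I would simply set $T'=\downward{T\cup H}=\{y\in T_0: y\le x\text{ for some }x\in T\cup H\}$, the smallest downward-closed subset of $T_0$ containing $T\cup H$. By the previous paragraph $(T',r)$ is a normal (indeed rayless) tree in $G$ containing $T$ and $H$, which gives (a). For minimality (b), let $(T'',r)$ be any normal tree in $G$ with $T\cup H\subseteq T''$ and $T''$ a subgraph of $T'$. Being a tree rooted at $r$, $T''$ is a connected subgraph of the tree $T'$ containing $r$; such a subgraph is automatically the induced subtree of $T'$ on its vertex set and is downward closed in $T'$ (equivalently in $T_0$). A downward-closed set containing $T\cup H$ must contain $\downward{T\cup H}=T'$, whence $T''=T'$.

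Finally, the cardinality estimates (c) and (d) fall out of the minimal construction. Since $T$ is itself downward closed in $T_0$ (the $r$--$x$ path of any $x\in T$ already lies in $T$), we have $T'\setminus T=\bigcup_{h\in H}\big(\downward{h}\setminus T\big)$, a union of at most $|H|$ sets. Each $\downward{h}=\odownward{h}\cup\{h\}$ is \emph{finite}, because $\odownward{h}$ is finite in any rooted tree. Hence if $H$ is finite then $T'\setminus T$ is a finite union of finite sets, proving (c); and if $H$ is infinite then $|T'\setminus T|\le |H|\cdot\aleph_0=|H|$, proving (d).

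The one step deserving genuine care---and the main obstacle---is the minimality argument: one must verify that an abstract normal tree $(T'',r)$ appearing merely as a subgraph of $T'$ is forced to be an \emph{induced, downward-closed} subtree, so that its own tree order agrees with that of $T'$ and the down-closure characterization applies. Everything else is bookkeeping, since invoking \cref{theorem:jung} reduces the problem from ``construct a normal tree from scratch'' to ``cut down a given one,'' where \cref{lemma:normalSubtree} guarantees that normality is preserved for free.
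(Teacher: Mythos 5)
Your proof is correct and follows essentially the same route as the paper's: both invoke \cref{theorem:jung} to embed $T\cup H$ in an ambient rayless normal tree, take the downward closure $T\cup\bigcup_{h\in H}\downward{h}$ as $T'$, appeal to \cref{lemma:normalSubtree} for normality, and derive minimality and the cardinality bounds from downward-closedness and the finiteness of each $\odownward{h}$. Your extra care about why a normal tree $(T'',r)$ that is merely a subgraph of $T'$ must be downward closed is a welcome elaboration of a step the paper treats more tersely.
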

\begin{proof}
    By \ref{theorem:jung}, there exists a rayless normal tree $(S, r)$ in $G$ containing $T$ and $H$.

    Let $T'=T\cup\bigcup_{x \in H} \odownward{x}$.
    $T'$ is easily seen to be a tree, so, by Lemma~\ref{lemma:normalSubtree}, it is normal. Therefore, (a) holds.
    Moreover, $T'\setminus T\subseteq \bigcup_{x \in H} \odownward{x}$, which is a union of $|H|$ many finite sets.
    Thus, (c) and (d) hold.

    Finally, (b) also holds: every subgraph of $T'$ which is a tree must contain all its vertices, as it must be downwards closed as it is a subtree, and it must also contain all its edges, as a tree becomes disconnected if any edge is removed.
\end{proof}

The following lemmas, though standard in the theory of normal trees, are included with proofs for completeness, as they will be instrumental in the main construction.

\begin{lemma}\label{lemma:normalTreeUnion}
    Let $G$ be a graph and $(T, r)$ be a normal tree in $G$.

    For every connected component $C$ of $G\setminus T$, let $(T_C, r)$ be a normal tree in $G$ extending $(T, r)$ such that $T_C\setminus T\subseteq C$.

    Let $T'=\bigcup\{T_C: C \text{ is a connected component of } G\setminus T\}$.
    Then $(T', r)$ is a normal tree in $G$.
    Moreover, if for every connected component $C$ of $G\setminus T$, $T_C$ is rayless, then $(T', r)$ is also rayless.
\end{lemma}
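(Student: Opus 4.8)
The plan is to treat $T'$ as the result of gluing the trees $T_C$ along their common subtree $T$, exploiting that distinct components share no vertices outside $T$. The first thing I would record is that for distinct components $C\neq C'$ we have $V(T_C)\cap V(T_{C'})=V(T)$ and $E(T_C)\cap E(T_{C'})=E(T)$: indeed $T_C\setminus T\subseteq C$ and $T_{C'}\setminus T\subseteq C'$ are disjoint, and since $T$ is an induced subtree of each $T_C$, any $T'$-edge incident to a vertex of $T_C\setminus T\subseteq C$ is already an edge of $T_C$ (its other endpoint lies in $T\cup C$, and it cannot lie in a second component). In particular $T'\cap C=T_C\setminus T$, each $T_C$ is an induced subtree of $T'$, the tree order of $T'$ restricts to that of $T_C$, and for $x\in T_C\setminus T$ the set $\odownward{x}$ computed in $T'$ coincides with the one computed in $T_C$ and is contained in $T_C$.

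Next I would show $(T',r)$ is a tree. It is connected, being a union of connected subgraphs all containing $r$. For acyclicity, suppose $Z$ is a cycle in $T'$. As $T$ is a tree, $Z\not\subseteq T$, so some vertex of $Z$ lies in $T_C\setminus T\subseteq C$ for a component $C$. Because every $T'$-neighbour of a vertex of $T_C\setminus T$ lies in $T_C$ and $T'\cap C=T_C\setminus T$, a maximal arc $A$ of $Z$ whose interior lies in $T_C\setminus T$ has both endpoints in $T$; such an arc with nonempty interior exists, since $Z$ cannot lie entirely in $T_C\setminus T$ (its edges would be $T_C$-edges, giving a cycle in the tree $T_C$). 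Writing $a,b\in T$ for its endpoints, $A$ is an $a$--$b$ path in $T_C$ whose interior avoids $T$, whereas the unique $a$--$b$ path in $T_C$ lies in $T$ (as $T$ is downward closed in $T_C$); these are two distinct $a$--$b$ paths in the tree $T_C$, a contradiction. Hence $T'$ is acyclic and $(T',r)$ is a rooted tree in $G$.

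For normality, take distinct $x,y\in T'$ incompatible in the tree order of $T'$. If $x,y$ lie in a common $T_C$, they are incompatible there as well, and any $T'$-path from $x$ to $y$ has interior disjoint from $T'\supseteq T_C$, hence is a $T_C$-path, which normality of $T_C$ forbids. Otherwise neither lies in $T$ (a vertex of $T$ lies in every $T_C$), so $x\in T_C\setminus T\subseteq C$ and $y\in T_{C'}\setminus T\subseteq C'$ with $C\neq C'$. A $T'$-path from $x$ to $y$ would be a path in $G$ all of whose vertices avoid $T$ (its endpoints lie in $C,C'\subseteq G\setminus T$ and its interior avoids $T'\supseteq T$), hence a connected subgraph of $G\setminus T$ joining $C$ to $C'$, forcing $C=C'$; a contradiction. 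Thus $(T',r)$ is normal.

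Finally, for the rayless statement assume every $T_C$ is rayless; then so is $T$, being a subgraph of any $T_C$ (the case $G\setminus T=\emptyset$ is trivial, as then $T'=T$). Suppose $R=(v_0,v_1,\dots)$ is a ray in $T'$. The key step, which I expect to be the main obstacle, is the standard fact that a graph ray in a rooted tree has an increasing tail: consecutive heights differ by $1$, the minimum height is attained at a unique index $N$, and for $i\ge N$ the ray cannot re-descend past $v_N$ without revisiting it, whence $v_N\le v_{N+1}\le\cdots$ and, iterating, $(v_{N+k})_k$ is a strictly increasing chain. I would then localize this chain: if all its terms lie in $T$ it is a ray in $T$, contradicting raylessness of $T$; otherwise let $v_{N+j}\in T_C\setminus T\subseteq C$ be its first term outside $T$, and use $\odownward{v_{N+k}}\subseteq T_{C''}$ (for the component $C''$ of $v_{N+k}$) together with $v_{N+j}<v_{N+k}$ to force $v_{N+k}\in C$ for all $k\ge j$, so the tail lies in $T_C$ and is a ray there, contradicting raylessness of $T_C$. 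Hence $T'$ is rayless. The structural gluing facts are routine; the only genuinely delicate points are the increasing-tail lemma and the order-theoretic localization of that tail into a single $T_C$.
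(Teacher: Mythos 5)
Your proof is correct. For the tree and normality claims it matches the paper's approach (the paper dismisses acyclicity as ``easily seen'' where you supply the maximal-arc argument, and the normality proof uses the identical two-case split: both points in a single $T_C$, versus points in distinct components forcing a $T$-avoiding path between different components of $G\setminus T$). Where you genuinely diverge is the rayless part. The paper never chases a ray inside $T'$: it appeals to Theorem~\ref{theorem:jung} to conclude that the rayless normal trees $T$ and $T_C$ are dispersed, fixes an end $\epsilon$, shrinks $G(\epsilon,F)$ until it misses first $T$ and then the relevant $T_C$, and concludes that $T'$ is dispersed, hence rayless. You instead argue order-theoretically: extract the strictly increasing tail of a graph ray in a rooted tree (unique minimum of the height sequence, then induction showing each subsequent vertex is a child of its predecessor), use that $T$ is downward closed in $T'$ so the tail cannot re-enter $T$ after leaving it, and localize the tail into a single $T_C$ via disjointness of the components, contradicting raylessness of $T_C$ (or of $T$ if it never leaves). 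Your route is more elementary and self-contained --- it needs no end-space machinery and no appeal to Jung's theorem --- at the price of proving the increasing-tail lemma, which you correctly identify as the main obstacle and sketch accurately; the paper's route is shorter given the toolkit already assembled and directly produces the dispersedness statement in the form later used. Two cosmetic points: in the acyclicity step you should say the cycle $Z$ cannot lie entirely in $T_C$ (not merely in $T_C\setminus T$), which is what the same one-line argument gives and what guarantees the maximal arc has its two endpoints in $T$ rather than closing up into a cycle of $T_C$; and your reading $T'=T$ when $G\setminus T=\emptyset$ is the sensible interpretation of a degenerate case the paper's statement also glosses over. Neither affects correctness.
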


\begin{proof}
    Let $T'$ be the union of all $T_C$ as defined in the statement.
    
    $T'$ is easily seen to be a tree.
    We show it is normal. Fix $x, y$ incompatible in the tree order of $T'$.

    If $x, y$ are both in $T_C$ for some connected component $C$ of $G\setminus T$, then they are incompatible in the tree order of $T_C$, then there is no $T_C$-path from $x$ to $y$, and thus there is no $T'$-path from $x$ to $y$.

    Otherwise, let $C, D$ be connected components of $G\setminus T$ such that $x \in T_C$ and $y \in T_D$.
    As $x\notin T_D$ and $y \notin T_C$, then $x \in C$ and $y \in D$.
    Thus, there is no path from $x$ to $y$ in $G$ that does not intersect $T$, and therefore there is no $T'$-path from $x$ to $y$.

    It remains to show that if every $T_C$ is rayless, then so is $T'$.

    It suffices to show that $T'$ is dispersed, as no set containing rays is dispersed.
    Let $\epsilon$ be an end.
    There exists a finite set $F$ such that $G(\epsilon, F)$ is disjoint from $T$.
    Thus, $G(\epsilon, F)$ is contained in some connected component $C$ of $G\setminus T$.
    There exists a finite set $F'$ containing $F$ such that $G(\epsilon, F')$ is disjoint from $T_C$.

    Then $G(\epsilon, F')$ is disjoint from $T'$: indeed if it were to intersect $T'$, it would intersect some $T_D\setminus D$, where $D$ is a connected component of $G\setminus T$ distinct from $C$.
    But, as $G(\epsilon, F')$ is connected, that would imply that there is a path from $C$ to $D$ in $G\setminus T$, which is impossible.
\end{proof}

\begin{lemma}\label{lemma:normalRaylessNhood}
    Let $G$ be a graph and $(T, r)$ be a normal rayless tree in $G$.
    Then for every $C$ connected component of $G\setminus T$ there is a finite $F\subseteq T$ such that $C$ is a connected component of $G\setminus F$.
\end{lemma}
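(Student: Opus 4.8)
The plan is to take $F$ to be the set $N$ of all vertices of $T$ that are adjacent in $G$ to some vertex of $C$ — the ``neighbourhood'' of $C$ inside $T$ — and to show that $N$ is finite and that $C$ is precisely the component of $G\setminus N$ containing it. Since $C$ is a component of $G\setminus T$ we have $C\cap T=\emptyset$, so $N\subseteq T$ is automatically disjoint from $C$, and $C$ is a connected subgraph of $G\setminus N$. Thus the entire content of the lemma reduces to two points: (i) that $N$ is finite, and (ii) that $C$ is maximal among the connected subgraphs of $G\setminus N$.

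For (i), I would first argue that $N$ is a chain in the tree order of $T$. Take distinct $x,y\in N$ with neighbours $x',y'\in C$. As $C$ is connected there is a path from $x'$ to $y'$ inside $C$, and prepending $x$ and appending $y$ gives a $T$-path from $x$ to $y$, since its endpoints lie in $T$ while its interior lies in $C\subseteq G\setminus T$ (injectivity is immediate as $x,y\notin C$). By the defining property of a normal tree, $x$ and $y$ cannot be incompatible, so they are comparable; hence $N$ is a chain. It then remains to invoke raylessness: recall each $\odownward{z}$ is finite, so distinct comparable points have distinct (natural number) heights, and an infinite chain would contain an increasing sequence $x_0<x_1<\cdots$. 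The unique $r$--$x_n$ paths in $T$ are then nested (each $x_i<x_n$ forces $x_i$ onto the $r$--$x_n$ path), and their union is a one-way infinite path, i.e.\ a ray in $T$. Since $T$ is rayless this is impossible, so every chain — and in particular $N$ — is finite. This translation of the graph-theoretic ``rayless'' hypothesis into ``no infinite chain'' is the crux, and I expect it to be the main obstacle.

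For (ii), set $F=N$ and let $C'$ be the component of $G\setminus F$ containing $C$. Given $v\in C'$, choose a path $P$ from a fixed vertex of $C$ to $v$ inside $G\setminus F$. If $P$ were to meet $T$, its first vertex $w$ in $T$ would have a predecessor $u\in C$ (the initial segment of $P$ avoids $T$ and begins in $C$), making $w$ a neighbour of $C$ in $T$, i.e.\ $w\in N=F$ — contradicting $P\subseteq G\setminus F$. Hence $P$ avoids $T$ entirely, so $v$ lies in the same component of $G\setminus T$ as the starting vertex, namely $C$. Therefore $C'=C$, and $C$ is a component of $G\setminus F$ with $F\subseteq T$ finite, as required.

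The only genuinely delicate step is the finiteness of $N$; everything else is a routine verification once $N$ is identified as the correct separating set. As an alternative to the explicit ray construction one could instead lean on \Cref{theorem:jung} and the structure of dispersed sets to bound $N$, but the direct chain-to-ray argument seems cleanest.
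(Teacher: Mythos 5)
Your proposal is correct and follows essentially the same route as the paper: the same separating set $F=N$ of $T$-neighbours of $C$, the same $T$-path/normality argument showing $N$ is a chain, the same use of raylessness to conclude $N$ is finite, and the same maximality verification that $C$ is a full component of $G\setminus N$. The only cosmetic difference is that the paper argues maximality via a single edge leaving $C$ inside $C'$ rather than via a path, which changes nothing of substance.
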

\begin{proof}
    Let $C$ be a connected component of $G\setminus T$.
    Let $F=\{x \in T: \exists y \in C\, \{x, y\}\in E(G)\}$.

    $F$ is a chain on $T$: given $x, x' \in F$, there exists $y, y' \in C\setminus T$ such that $\{x, y\}, \{x', y'\}\in E(G)$.
    As $y$ and $y'$ are vertices in the connected graph $C$, there exists a path $P$ contained in $C$ from $y$ to $y'$.
    Then $(x)^\frown P^\frown(x')$ is a $T$-path, therefore $x, x'$ are compatible.

    As $T$ is rayless, $F$ is finite, otherwise an increasing enumeration of $\bigcup_{x \in F} \odownward{x}$ would yield a ray in $T$.

    Now let $c \in C$ be any vertex and let $C'$ be the connected component of $G\setminus F$ containing $c$.
    $C$ is connected in $G\setminus F$ and contains $c$, so $C\subseteq C'$.

    To see that $C'\subseteq C$, assume this is not true.
    As $C'$ is connected, there exists $x \in C'\setminus C$ and $y \in C$ such that $\{x, y\}\in E(G)$.
    As $x \notin C$, which is a connected component of $G\setminus T$, and $y\in C$ is a neighbor of $x$, this means that $x \in T$. Thus, $x \in F$, a contradiction (since $C'$ is disjoint from $F$).

    Thus, $C'=C$, completing the proof.
\end{proof}

We now turn to the main lemma.
The argument is largely inspired by Theorem~1 of \cite{kurkofka2021approximating}; we adapt their approach, incorporating our additional hypothesis to control the cardinality of the constructed tree.

\begin{lemma}\label{lemma:mainLemma}
    Let $G$ be a connected graph  and $\kappa$ be an infinite cardinal such that for every finite set $F\subseteq V(G)$, at most $\kappa$ many components of $G\setminus F$ contains rays.

    Let $(F_\epsilon: \epsilon \in \Omega(G))$ be a family of finite sets.

    Then, for every $r \in G$, there exists a rayless normal tree $(T, r)$ in $G$ such that $|T|\leq \kappa$ and such that every connected component of $G\setminus T$ is contained in some $G(\epsilon, F_\epsilon)$.
\end{lemma}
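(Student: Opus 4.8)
The plan is to build $T$ by a recursion of length $\omega$, producing an increasing chain of rayless normal trees $(S_n)_{n<\omega}$ with $|S_n|\le\kappa$ and setting $T=\bigcup_{n<\omega}S_n$. At each stage I look at the components of $G\setminus S_n$ and call a component $C$ \emph{good} if $C\subseteq G(\epsilon,F_\epsilon)$ for some end $\epsilon$, and \emph{bad} otherwise; I then extend the tree only into the bad components that contain a ray, leaving all other components frozen. The tool that lets me do this componentwise and glue the pieces back into a single normal tree is \cref{lemma:normalTreeUnion}: taking $T_C=S_n$ on the frozen components and a suitable enlargement on the bad ray-containing ones yields a rayless normal $S_{n+1}$. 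Each local enlargement is made cheaply via \cref{lemma:normalTreeDispersed}: for a bad ray-containing $C$ I pick an end $\epsilon_C$ with a ray in $C$ and absorb the finite (hence dispersed) set $F_{\epsilon_C}\cap C$, which by part~(c) of \cref{lemma:normalTreeDispersed} adds only finitely many vertices inside $C$.

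The first thing to verify is the cardinality bound, and this is exactly where the hypothesis on $\kappa$ is used. I would isolate the auxiliary fact that any rayless normal tree $S$ with $|S|\le\kappa$ has at most $\kappa$ components of $G\setminus S$ containing a ray: by \cref{lemma:normalRaylessNhood} each such component is already a component of $G\setminus F$ for some finite $F\subseteq S$, there are only $|[S]^{<\omega}|\le\kappa$ such $F$, and for each the hypothesis permits at most $\kappa$ ray-containing components. Since at stage $n$ we extend only into the (at most $\kappa$) bad ray-containing components and add finitely many vertices to each, we get $|S_{n+1}\setminus S_n|\le\kappa$, whence $|T|=|\bigcup_n S_n|\le\kappa$, with normality and raylessness of each $S_n$ guaranteed by \cref{lemma:normalTreeUnion}.

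The main obstacle is to show that the limit tree $T$ is \emph{rayless}; this is where the covering hypothesis does the real work. Suppose for contradiction that $T$ contains a ray $R$ and let $\epsilon^*=[R]$. Because $S_n$ is rayless, $R\cap S_n$ is a finite initial segment of $R$, so a tail of $R$ lies in a well-defined component $C^{(n)}$ of $G\setminus S_n$, and these nest: $C^{(0)}\supseteq C^{(1)}\supseteq\cdots$, each containing a ray. If some $C^{(n)}$ were good we would have frozen it, forcing $R\subseteq S_n$ and contradicting that $S_n$ is rayless; hence every $C^{(n)}$ is bad. On the other hand, since each end $\epsilon$ trivially lies in $\Omega(\epsilon,F_\epsilon)$, a tail of $R$ lies in $G(\epsilon^*,F_{\epsilon^*})$. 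As $T$ absorbs an initial segment of $R$ of unbounded length, each vertex of the finite set $F_{\epsilon^*}$ is eventually separated from the tail of $R$ by $S_n$ — here one uses normality of $T$ together with the separation property of normal trees recalled above to see that a vertex of $F_{\epsilon^*}$ lying off the branch is cut into a different component once enough of $R$ has been absorbed. Thus for large $n$ the connected set $C^{(n)}$ is disjoint from $F_{\epsilon^*}$ and meets $G(\epsilon^*,F_{\epsilon^*})$, so $C^{(n)}\subseteq G(\epsilon^*,F_{\epsilon^*})$; that is, $C^{(n)}$ is good, a contradiction. I expect the careful verification of this separation — that a finite $F_{\epsilon^*}$ really is peeled off the branch once the branch has been followed far enough — to be the delicate point, and it is precisely here that the construction of Theorem~1 of \cite{kurkofka2021approximating} is adapted.

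Finally, $T$ is a rayless normal tree with $|T|\le\kappa$, and the same good/bad dichotomy shows that every component of $G\setminus T$ containing a ray is contained in some $G(\epsilon,F_\epsilon)$: such a component $C$ sits inside a component $C^{(n)}$ of $G\setminus S_n$ at every stage, and the raylessness argument above forces some $C^{(n)}$ to be good and hence frozen, so that $C^{(n)}$ is already a component of $G\setminus T$ containing $C$; thus $C=C^{(n)}\subseteq G(\epsilon,F_\epsilon)$ for the witnessing end. Since, by the auxiliary fact, there are at most $\kappa$ components of $G\setminus T$ containing a ray, choosing one witness $\epsilon$ for each yields a family $\set{\Omega(\epsilon,F_\epsilon)}$ of size at most $\kappa$ covering $\Omega(G)$, which is exactly what is needed to read off $L(\Omega(G))\le\kappa$ in \cref{theorem:lindelofDegree}.
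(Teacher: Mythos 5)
There is a genuine gap, and it lies in the construction itself, not merely in the write-up of the raylessness argument. Your extension step --- ``for a bad ray-containing component $C$, pick an end $\epsilon_C$ with a ray in $C$ and absorb $F_{\epsilon_C}\cap C$'' --- resolves only the single direction $\epsilon_C$, and nothing forces $\epsilon_C$ to be the end of the ray that is being built into $T$. Concretely, let $G$ consist of a spine $R=(v_0,v_1,\dots)$, a vertex $w$ adjacent to every $v_n$, and for each $n\geq 1$ a tooth $Q_n$ (a ray meeting the rest of $G$ only at $v_n$). Then $\Omega(G)=\{\epsilon^*\}\cup\{\eta_n:n\geq 1\}$ with $\epsilon^*=[R]$ and $\eta_n=[Q_n]$, and every finite $F$ leaves only finitely many ray-containing components, so the hypothesis holds with $\kappa=\aleph_0$. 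Put $F_{\epsilon^*}=\{w\}$ and $F_{\eta_n}=\{v_n\}$, so that $G(\epsilon^*,F_{\epsilon^*})=G\setminus\{w\}$ and $G(\eta_n,F_{\eta_n})=Q_n\setminus\{v_n\}$. Starting from $S_0=\{v_0\}$, the component of $G\setminus S_n$ containing the spine tail always contains $w$ (which is adjacent to every spine vertex) as well as spine vertices, hence is contained in no $G(\epsilon,F_\epsilon)$: it is bad at every stage. If at stage $n$ the chosen end is $\eta_{n+1}$, you absorb $F_{\eta_{n+1}}\cap C=\{v_{n+1}\}$ and get $S_{n+1}=\{v_0,\dots,v_{n+1}\}$; iterating, $T=\{v_n:n\in\omega\}$ contains the ray $R$. (The correct tree here is simply $\{v_0,w\}$.) This also pinpoints the false step in your raylessness argument: the claim that each vertex of $F_{\epsilon^*}$ is ``eventually separated from the tail of $R$ by $S_n$'' fails for $w$, which never enters $T$, so the separation property of normal trees cannot be applied to it. Note that, as you yourself set things up, $C^{(n)}$ being bad \emph{means} $C^{(n)}\not\subseteq G(\epsilon^*,F_{\epsilon^*})$, so your argument attempts to contradict badness of the main component head-on; that cannot succeed without a stronger extension step.

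This is exactly where the paper invokes the direction theorem (Theorem~\ref{theorem:directions}): for each unbounded (bad) component $C$ there is a finite $S\subseteq C$ such that $C\setminus S$ has either no unbounded components or at least two --- otherwise the assignment sending each finite $S\subseteq C$ to the unique unbounded component of $C\setminus S$ would be a direction, yielding an end $\epsilon$ with $d(F_\epsilon\cap C)\subseteq G(\epsilon,F_\epsilon)$, contradicting unboundedness. In the example above this produces $S=\{w\}$ at the very first step. The ``splits into at least two'' alternative is what powers the raylessness proof: along any putative ray of $T$ one harvests infinitely many pairwise disjoint unbounded side components $D_k$ with pairwise disjoint connecting paths $P_k$, and since $F_{\epsilon^*}$ is finite some $D_k\cup P_k$ misses it, forcing $D_k\subseteq G(\epsilon^*,F_{\epsilon^*})$ --- the contradiction lands on a side component, not on the main one. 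Two further points: your cardinality bookkeeping via Lemma~\ref{lemma:normalRaylessNhood} is fine and matches the paper, but (i) you freeze bad \emph{rayless} components, whereas the lemma requires every component of $G\setminus T$ to be contained in some $G(\epsilon,F_\epsilon)$ and bad rayless components can exist (the paper extends into all unbounded components, ray-containing or not), and (ii) the final sentence about reading off $L(\Omega(G))\leq\kappa$ belongs to Theorem~\ref{theorem:lindelofDegree}, not to this lemma.
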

To prove it, as it was done in \cite{kurkofka2021approximating}, we will rely on the following result.
\begin{theorem}[\cite{diestel2003graph}]\label{theorem:directions}Let $G$ be a graph and $d$ be a direction on $G$, that is, a mapping that associates to each finite subset $S$ of $V(G)$, a connected component of $C$ in a way that whenever $S\subseteq S'$ are finite subsets of $V(G)$, then $d(S')\subseteq d(S)$.

    Then there exists $\epsilon \in \Omega(G)$ such that for every finite set $F\subseteq V(G)$, $d(F)=G(\epsilon, F)$.
\end{theorem}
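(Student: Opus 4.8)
The plan is to realise the direction by an actual ray and then read off the end it determines, so that the required identity $d(F)=G(\epsilon,F)$ becomes the statement that a tail of the ray lies in $d(F)$ for every finite $F$. First I would reduce to the connected case: since $d(\emptyset)$ is a connected component of $G$ and $d(S)\subseteq d(\emptyset)$ for every finite $S$, the restriction of $d$ is a direction of the component $d(\emptyset)$, and an end of that component is in particular an end of $G$ whose components in $G$ agree with $d$; so without loss of generality $G$ is connected and $d(\emptyset)=G$. I would also record that each $d(S)$ is infinite: if $d(S)$ had finite vertex set $W$, then $d(S\cup W)$ would be a nonempty component of $G\setminus(S\cup W)$ contained in $d(S)$ yet disjoint from $W$, which is impossible.

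Next I would build a ray that follows the direction. Recursively choose vertices $r_0,r_1,\dots$ so that, writing $S_n=\{r_0,\dots,r_{n-1}\}$, we have $r_n\in d(S_n)$ with $r_n$ adjacent to $r_{n-1}$. This is always possible: given $r_{n-1}\in d(S_{n-1})$, the component $d(S_n)=d(S_{n-1}\cup\{r_{n-1}\})$ is contained in $d(S_{n-1})$ and avoids $r_{n-1}$, hence is one of the components of $d(S_{n-1})-r_{n-1}$; since $d(S_{n-1})$ is connected, $r_{n-1}$ has a neighbour in each such component. As $r_n\in d(S_n)\subseteq G\setminus S_n$, the resulting sequence $R=(r_0,r_1,\dots)$ is injective, hence a ray; let $\epsilon\in\Omega(G)$ be its end. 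Because $r_m\in d(S_m)\subseteq d(S_n)$ for all $m\ge n$, a tail of $R$ lies in $d(S_n)$, so $G(\epsilon,S_n)=d(S_n)$ for every prefix set $S_n$.

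The remaining task is to pass from prefix sets to an arbitrary finite $F$, and this reduces cleanly to a single separation property of the nested components: that $\bigcap_{n} d(S_n)=\emptyset$ as vertex sets, which I will call $(\star)$. Indeed, assume $(\star)$ and fix a finite $F$. Since the sets $d(S_n)$ are decreasing and $F$ is finite, there is an $m$ with $d(S_m)\cap F=\emptyset$. Then $d(S_m)$ is a connected subgraph of $G\setminus F$ containing $r_j$ for all large $j$, so it lies inside the component $C=G(\epsilon,F)$ of $G\setminus F$ carrying the tail of $R$. On the other hand $d(F\cup S_m)$ is a nonempty subset of $d(F)\cap d(S_m)\subseteq d(F)\cap C$, and since $C$ and $d(F)$ are both components of $G\setminus F$ meeting $d(F\cup S_m)$, they coincide; that is, $G(\epsilon,F)=d(F)$, as desired.

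Thus everything comes down to arranging $(\star)$, and this is the step I expect to be the main obstacle. The greedy construction above does not by itself guarantee it: the direction may \emph{branch away} from $R$ at a separator $v\notin R$ that persists in $d(S_n)$ for every $n$ — for instance a single vertex to which the direction's preferred subgraph is attached, which is never forced into any prefix set $S_n$ — in which case the tail of $R$ sits in the wrong component of $G\setminus\{v\}$ and $G(\epsilon,\{v\})\ne d(\{v\})$. The fix is to steer the recursion so that every vertex is eventually separated off: at each stage one routes the ray through, or deliberately past, the separators that the direction uses to detach its preferred component, exploiting the connectivity of the infinite sets $d(S_n)$ together with the consistency $d(S')\subseteq d(S)$ to keep extending while expelling such vertices. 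Verifying that this can always be carried out, so that $(\star)$ holds, is the heart of the argument; granting it, the reduction above yields $G(\epsilon,F)=d(F)$ for every finite $F$ and completes the proof.
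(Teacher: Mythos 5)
The paper does not actually prove this statement; it quotes it from Diestel and K\"uhn \cite{diestel2003graph}, so your proposal has to stand on its own. Your preliminary steps are correct: the reduction to connected $G$, the fact that every $d(S)$ is infinite, the greedy ray $R$ with $r_n\in d(S_n)$ for the prefix sets $S_n$, and the conditional argument that tails in each $d(S_n)$ together with $(\star)$ (i.e.\ $\bigcap_n d(S_n)=\emptyset$) yield $G(\epsilon,F)=d(F)$ for all finite $F$ -- that deduction is sound. But the proof stops exactly at the crux: you explicitly ``grant'' the steering construction that is supposed to establish $(\star)$. Unconditionally, what you have shown is only that the end of the greedy ray agrees with $d$ on the chain of prefix sets, and your own observation about a persisting separator is a real obstruction: take a ray $R$, a vertex $u$ joined to every vertex of $R$, and a second ray $Q$ starting at $u$; the direction of $Q$'s end satisfies $d(S_n)\supseteq$ a tail of $R$ for every prefix set $S_n\subseteq V(R)$, yet $d(\{u\})$ is the wrong component for $R$'s end. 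So the deferred step is the entire content of the theorem, and deferring it is a genuine gap.

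Worse, the target you reduce to is not merely hard but unattainable in general, so the plan cannot be completed as stated. Let $G=K_{\aleph_1}$, the complete graph on $\aleph_1$ vertices: it has a unique end and a unique direction, with $d(S)=G\setminus S$ for every finite $S$, and for \emph{any} increasing sequence $(S_n)_{n\in\omega}$ of finite sets we get $\bigcap_n d(S_n)=V(G)\setminus\bigcup_n S_n\neq\emptyset$, since $\bigcup_n S_n$ is countable. The theorem holds trivially for this $G$, so there is no contradiction, but it shows that ``everything comes down to arranging $(\star)$'' is false: $(\star)$ is sufficient but impossible whenever $V(G)$ is uncountable and no finite set disconnects much, while your scheme (enumerating all vertices into $\bigcup_n S_n$, with connecting paths replacing single edges) does essentially work for countable $G$. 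What the general case requires is the weaker condition that for every finite $F$ there exists $n$ such that $R$ has a tail in $d(F\cup S_n)$, and securing this simultaneously for the possibly uncountably many finite sets $F$ is precisely the substance of Diestel and K\"uhn's argument, which proceeds via the star--comb lemma and a case analysis around dominating vertices rather than by expelling every vertex from the chain of components. As it stands, your proposal proves the routine reductions, correctly diagnoses the obstacle, but replaces the heart of the proof with an unprovable intermediate goal.
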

\begin{myproof}{Lemma}{lemma:mainLemma}
    Given $H\subseteq \rmV(G)$, we say that $H$ is \emph{bounded} if there exists $\epsilon\in \Omega(G)$ such that $H\subseteq G(\epsilon, F_\epsilon)$.
    Otherwise, we say that $H$ is \emph{unbounded}.

    Fix any $r \in G$.
    
    We recursively construct sequences $(T_n: n<\omega)$ and  $(S^n: n \in \omega)$ such that, for every $n \in \omega$:
    \begin{enumerate}[label=(\alph*)]
    \item $T_0=\{r\}$.
    \item $(T_{n+1}, r)$ is a rayless normal tree in $G$ extending $(T_n, r)$.
    \item $|T_{n}|\leq \kappa$.
    \item $S^n=(S^n_C\subseteq C: C \text{ is an unbounded connected component of } G\setminus T_n)$.
    \item $T_{n+1}=T_n\cup \bigcup\{T_{n+1}\cap C: C \text{ is an unbounded connected component of } G\setminus T_n\}$.
    \item For every unbounded connected component $C$ of $G\setminus T_n$, $S^n_C\subseteq C$ is a finite set such that $C\setminus S_C^n$ has:
        \begin{enumerate}[label=(\roman*)]
        \item no unbounded components, if there exists a finite set $S\subseteq C$ such that $C\setminus S$ has no unbounded components, or
        \item at least two unbounded components, otherwise.
        \end{enumerate}
    \item For every unbounded connected component $C$ of $G\setminus T_n$, $T_n\cup(C\cap T_{n+1})$ is a normal rayless tree in $G$ containing $T_n$ and $S^n_C$ which is minimal in this sense (see Lemma~\ref{lemma:normalTreeDispersed}).
    \end{enumerate}
    We show this is possible.
    \begin{proof}[Construction]
    Let $T_0=\{r\}$.
    Assume we have constructed $T_n$.
    We show how to construct $T_{n+1}$ and $S^n$.

    First, we claim that, given an unbounded connected component $C$ of $G\setminus T_n$, there exists a finite set $S\subseteq C$ such that $C\setminus S$ has at least two unbounded components or no unbounded component.
    Otherwise, let $d$ be the map that associated to each finite subset $S\subseteq C$ the unique unbounded component of $C\setminus S$.
    Then $d$ is a direction: if $S\subseteq S'$, then $d(S')\subseteq d(S)$, as $d(S')$ is an unbounded connected subset of $C\setminus S$ and therefore is contained in the unique unbounded component $d(S)$ of $C\setminus S$. By Theorem~\ref{theorem:directions}, there exists $\epsilon \in \Omega(G)$ such that for every finite set $F\subseteq V(G)$, $d(F)=C(\epsilon, F)$.
    But then $d(F_\epsilon\cap C)=C(\epsilon, F_\epsilon\cap C)\subseteq G(\epsilon, F_\epsilon)$, contradicting that $d(F_\epsilon\cap C)$ is unbounded.

    Now, for each unbounded connected component $C$ of $G\setminus T_n$, choose $S^n_C$ as in (f).
    As $S^n_C$ is finite, we apply Lemma~\ref{lemma:normalTreeDispersed} to obtain a normal rayless tree $(T_{n+1}^C, r)$ in $G$ extending $T_n$ and containing $S^n_C$ which is minimal in this sense, so that $T_{n+1}^C\setminus T_n$ is finite.

    By Lemma~\ref{lemma:normalTreeUnion}, $T_{n+1}=\bigcup\{T_{n+1}^C: C \text{ is an unbounded connected component of } G\setminus T_n\}$ is a normal rayless tree in $G$ extending $T_n$.
    Notice that $T_n\cup (T_{n+1}\cap C)=T_{n+1}^C$ for each unbounded connected component $C$ of $G\setminus T_n$, so clearly (e) and (g) holds.

    We must still verify that $|T_{n+1}|\leq \kappa$. As $|T_n|\leq \kappa$ and $T_{n+1}^C\setminus T_n$ is finite for every unbounded connected component $C$ of $G\setminus T_n$, it suffices to show that there are at most $\kappa$ many unbounded connected components of $G\setminus T_n$.
    By Lemma~\ref{lemma:normalRaylessNhood}, every such unbounded connected component is a connected component of $G\setminus F$ for some finite set $F\subseteq T_n$.
    Since $|T_n|\leq \kappa$, there are at most $\kappa$ many finite sets $F\subseteq T_n$, and, by hypothesis, there are at most $\kappa$ many unbounded connected components of $G\setminus F$ for every finite set $F\subseteq T_n$.
    Thus, as $\kappa$ is an infinite cardinal, there are at most $\kappa.\kappa=|\kappa\times \kappa|=\kappa$ many unbounded connected components of $G\setminus T_n$.
    
    This finishes the construction.
    \end{proof}
    Now, define $T=\bigcup_{n<\omega} T_n$.

    $(T, r)$ is a normal tree: $T$ has no cycles, as, since cycles are finite, there would be a cycle in some $T_n$.

    Moreover, $T$ is normal: given $x, y \in T$ incompatible in the tree order of $T$, there exists $n<\omega$ such that $x, y \in T_n$, and $x, y$ are incompatible in the tree order of $T_n$ as well.
    Thus, there is no $T_n$-path from $x$ to $y$, and therefore there is no $T$-path from $x$ to $y$.

    We now show that $T$ is rayless. Assume by contradiction that there exists a ray $R=(x_n: n<\omega)$ in $T$.
    We may assume that $x_0=r$.

    Recursively, construct $(n_k: k\in \omega)$ strictly increasing such that, for every $k \in \omega$, $x_{n_k}\notin T_{k}$.
    This is possible as $T_k$ is rayless.

    For each $k \in \omega$, $x_{n_k}$ lies in an unbounded connected component $C_k$ of $G\setminus T_k$.
    As $x_{n_{k+1}}$ lies in an unbounded connected component $C_{k+1}$ of $G\setminus T_{k+1}$, contained in $C_k$, it follows that $C_k\setminus S^{k}_{C_k}$ has at least two unbounded components.
    Let $D_k$ be an unbounded component of $C_k\setminus S^{k}_{C_k}$ distinct from $C_{k+1}$.
    Let $l_k$ be a neighbor of $D_k$ in $S^{k}_{C_k}$.
    As $l_k, x_{n_{k+1}}$ are points of $T_{k+1}\setminus T_k$ and as $\odownward{l_k}\cap \odownward{x_{n_{k+1}}}$ separates $l_k$ and $x_{n_{k+1}}$ in $G$ in case $l_k$ and $x_{n_{k+1}}$ are incompatible in the tree order of $T_{k+1}$, the unique path $P_k$ from $l_k$ to $x_{n_{k+1}}$ in $T_{k+1}$ is contained in $T_{k+1}\setminus T_k$.
    Thus, the $P_k$'s are pairwise disjoint.
    Moreover, the $D_k$'s are pairwise disjoint since if $k<k'$, then $D_{k'}$ is a subset of $C_k$, which is disjoint from $D_k$.

    Let $\epsilon$ be the end containing $R$ and let $F=F_\epsilon$.
    As $F$ is finite, there is $K$ such that for all $k\geq K$, $(D_k\cup P_k\cup\{x_{k}\})\cap F=\emptyset$.

    We claim that $D_{K+1}\subseteq G(\epsilon, F)$, contradicting the fact that it is unbounded.
    Indeed, first notice that $D_{K+1}$ is contained in the connected set $G(\epsilon, S^{K}_{C_K})$, and both $l_{K+1}$ and $x_{n_{K+1}}$ are inside of it.
    The path $P_{K+1}$ between them is in $T_{K+1}$, and the whole tail of $R$ starting at $x_{n_{K+1}}$ is contained in $G(\epsilon, F_\epsilon)$.
    As $F_{\epsilon}$ does not intersect the connected set $D_{K+1} \cup P_{K+1}$, we conclude that $D_{K+1}$ is bounded, a contradiction.

    This concludes that $T$ is rayless.
    It remains to seee that every component of $G\setminus T$ is bounded.
    Let $C$ be a connected component of $G\setminus T$.
    As $T$ is normal and rayless, the set $N$ of all neighbors of $C$ in $G$ (which must lie in $T$) is finite.
    Thus, there exists $n \in \omega$ such that $N\subseteq T_n$.
    Therefore, $C$ is a component of $G\setminus T_n$.
    As all unbounded components of $G\setminus T_n$ intersect $T_{n+1}$ and $C$ does not, it follows that $C$ is bounded.
\end{myproof}

At last, we have all the ingredients to prove the main result of this section:
\begin{myproof}{Theorem}{theorem:lindelofDegree}
    By adding a single vertex that connects to all other vertices, we may assume that $G$ is connected.
    \begin{description}
        \item[(a)$\Rightarrow$(b):] Assume that $L(\Omega(G))\leq \kappa$ and let $F\subseteq V(G)$ be a finite set.
        Then $(\Omega(C, F): C$ is a connected component of $G\setminus F$ which contains rays$)$ is an open partition of $\Omega(G)$, and therefore cannot have more than $L(\Omega(G))\leq \kappa$ many elements.

        \item[(b)$\Rightarrow$(a):] Let $\mathcal U$ be an open cover of $\Omega(G)$.
        For each $\epsilon \in \Omega(G)$, let $F_\epsilon$ be a finite set such that $\Omega(\epsilon, F_\epsilon)$ is contained in some $U_\epsilon \in \mathcal U$.
        By Lemma~\ref{lemma:mainLemma}, there exists a rayless normal tree $(T, r)$ in $G$ such that $|T|\leq \kappa$ and such that every connected component of $G\setminus T$ is contained in some $G(\epsilon, F_\epsilon)$.
        By Lemma~\ref{lemma:normalRaylessNhood}, every connected component $C$ of $G\setminus T$ is a connected component of $G\setminus F_C$ for some finite set $F_C\subseteq T$.

        As there are at most $\kappa$ many such $F_C\subseteq T$, and as each $F_C$ has at most $\kappa$ many connected components containing rays, it follows that there are at most $\kappa.\kappa=\kappa$ many connected components $C$ of $G\setminus T$ containing rays, and that for each such $C$, $\Omega(C, F_C)$ is open.
        Moreover, as $T$ is rayless and normal, every ray in $G$ must have a tail missing $T$, thus, the end it belongs to is in some $\Omega(C, F_C)$.
        Therefore, the set of all $\Omega(C, F_C)$ is an open refinement of $\mathcal U$ of cardinality at most $\kappa$.
    \end{description}
\end{myproof}

We have established yet another combinatorial characterization of the Lindelöf property for end spaces.
Since its statement is closely related to the Rothberger and Menger properties, we defer its discussion to Theorem~\ref{theorem:lindelofGraph2}, after those properties are introduced in the following sections.

As an application, we prove some facts about some other cardinal functions in the realm of end spaces.
Recall that the \emph{extent} of a topological space, denoted by $X$, and the \emph{cellularity} of a topological space, denoted by $c(X)$, are defined as follows:
\begin{align*}
    e(X) &= \sup\{|C|: C \text{ is a closed, discrete subset of } X\}+\aleph_0,\\
    c(X) &= \sup\{|\mathcal O|: \mathcal O \text{ is a family of pairwise disjoint open sets of } X\}+\aleph_0.
\end{align*}

\begin{proposition}\label{proposition:extentGraphs}
    Let $G$ be a graph.
    Then $e(\Omega(G))=L(\Omega(G))$ and $L(\Omega(G))\leq c(\Omega(G))$.
\end{proposition}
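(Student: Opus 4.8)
The plan is to reduce both assertions to the combinatorial characterization in \Cref{theorem:lindelofDegree} together with a single structural fact: for a fixed finite set $F\subseteq V(G)$, the sets $\Omega(C,F)$, as $C$ ranges over the ray-containing components of $G\setminus F$, form a partition of $\Omega(G)$ into clopen sets. Each $\Omega(C,F)$ is open and nonempty, distinct components give disjoint blocks (equivalent rays of a single end all have tails in the same component of $G\setminus F$), and every end has a ray whose tail lies in exactly one such block; hence the family is a partition, and since the complement of each block is the union of the others, every block is in fact clopen. This is essentially the ``open partition'' already invoked in the (a)$\Rightarrow$(b) direction of \Cref{theorem:lindelofDegree}, now upgraded to a clopen partition.

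For the inequality $L(\Omega(G))\leq c(\Omega(G))$ I would argue as follows. For each finite $F$, the partition above is a family of pairwise disjoint nonempty open sets, so its cardinality is at most $c(\Omega(G))$; in other words, $G\setminus F$ has at most $c(\Omega(G))$ ray-containing components. As $F$ is arbitrary and $c(\Omega(G))\geq\aleph_0$, applying \Cref{theorem:lindelofDegree} with $\kappa=c(\Omega(G))$ yields $L(\Omega(G))\leq c(\Omega(G))$ directly, with no supremum subtleties.

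For the equality $e(\Omega(G))=L(\Omega(G))$, I would first recall the standard inequality $e(X)\leq L(X)$, valid in any space \cite{hodel1984cardinal}: for a closed discrete $D$, the open cover $\{X\setminus D\}\cup\{U_d:d\in D\}$, with $U_d$ isolating $d$ in $D$, admits no subcover of size less than $|D|$. It then remains to prove $L(\Omega(G))\leq e(\Omega(G))$. Again fixing $F$, I would choose for each ray-containing component $C$ of $G\setminus F$ an end $\epsilon_C\in\Omega(C,F)$ and set $D=\{\epsilon_C:C\}$. Since the clopen block $\Omega(C,F)$ meets $D$ only in $\epsilon_C$, the set $D$ is discrete; and $D$ is closed because any $\eta\notin D$ lies in a unique block $\Omega(C,F)$, and, using that $\Omega(G)$ is Hausdorff, a small open neighborhood of $\eta$ inside $\Omega(C,F)$ avoiding the single point $\epsilon_C$ misses all of $D$. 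Thus $D$ is closed discrete of cardinality equal to the number of ray-containing components of $G\setminus F$, so that number is at most $e(\Omega(G))$; by \Cref{theorem:lindelofDegree} with $\kappa=e(\Omega(G))$ this gives $L(\Omega(G))\leq e(\Omega(G))$, and combined with $e\leq L$ the equality follows.

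The only genuinely delicate point I anticipate is the closedness of the representative set $D$: it hinges on the blocks $\Omega(C,F)$ being not merely open but clopen and forming a partition, so that separating $\eta$ from the unique point $\epsilon_C$ sharing its block suffices to separate $\eta$ from all of $D$. Everything else is a mechanical application of \Cref{theorem:lindelofDegree} to the two cardinals $c(\Omega(G))$ and $e(\Omega(G))$.
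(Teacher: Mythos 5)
Your proposal is correct and follows essentially the same route as the paper: both reduce everything to Theorem~\ref{theorem:lindelofDegree} via the disjoint open (indeed clopen) partition $\{\Omega(C,F)\}$, using its cardinality for the cellularity bound and one chosen representative per block (a closed discrete set) for the extent bound. The only cosmetic difference is that you apply the theorem directly with $\kappa=e(\Omega(G))$ and $\kappa=c(\Omega(G))$, whereas the paper argues contrapositively from $\kappa<L(\Omega(G))$ to produce $\kappa^+$ many components; these are logically interchangeable.
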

\begin{proof}
    It is a well-known and easy fact that, for every topological space, $e(X)\leq L(X)$:
    if $C$ is an infinite, discrete and closed subset of $X$, for each $c \in C$, let $\mathcal U_c$ be an open set containing $c$ such that $C\cap U_c=\{c\}$.
    Then $\{U_c: c \in \mathcal C\}\cup\{X\setminus C\}$ is an open cover of $X$ such that every subcover has cardinality at least $|C|$.

    Now we prove $L(\Omega(G))\leq e(\Omega(G))$ and $L(\Omega(G))\leq c(\Omega(G))$.
    Let $\kappa$ be a cardinal such that $\kappa<L(\Omega(G))$.
    By Theorem~\ref{theorem:lindelofDegree}, there exists a finite set $F\subseteq V(G)$ such that at least $\kappa^+$ many components $C$ of $G\setminus F$ contain rays.
    Let $\mathcal C$ be the collection of all such $C$.
    
    Then $\{\Omega(C, F): C \in \mathcal C\}$ is a family of pairwise disjoint open sets in $\Omega(G)$ of cardinality at least $\kappa^+$, thus, $c(\Omega(G))\geq \kappa^+$.
    Moreover, by selecting, for each $C \in \mathcal C$, a point $\epsilon_C \in \Omega(C, F)$, we obtain a closed discrete subset of $\Omega(G)$ of cardinality at least $\kappa^+$, thus, $e(\Omega(G))\geq \kappa^+$.

    We have proven that for all $\kappa<L(\Omega(G))$, $e(\Omega(G))\geq \kappa^+$ and $c(\Omega(G))\geq \kappa^+$, thus, $e(\Omega(G))\geq L(\Omega(G))$ and $c(\Omega(G))\geq L(\Omega(G))$, concluding the proof.
\end{proof}

In general, $e(X)=L(X)$ does not hold. For instance, the space $\omega_1$ of all countable ordinals is countably compact, therefore all infinite sets have accumulation points, and is not Lindelöf, therefore $e(\omega_1)=\aleph_0<\aleph_1=L(\omega_1)$.

Also, in general, $c(X)<L(X)$ is possible: fix, for each irrational number, a sequence of rational numbers converging to it.
Consider the topology where a basic open neighborhood of an irrational number is the number along with a tail of its fixed sequence, and every rational number is isolated.

Moreover, $L(X)=c(X)$ may fail for end-spaces: consider $X$ as the one point compactification of an uncountable set. Clearly, $c(X)$ is uncountable and $L(X)=\aleph_0$.
It is not difficult to see that $X$ is homeomorphic to an end space: consider for instance a graph $G$ comprising uncountably pairwise disjoint rays such that the set comprising the initial vertices of each ray forms a complete graph (see \myref{FIG_onePointCompactification}).

\begin{figure}[h!]
    \centering
    \begin{tikzpicture}
        \def\radius{0.15}
        \def\dy{0.75}
        \def\dx{1}
        \def\y{-2}
        \def\x{-4}
            \draw[thick] (\x-0.75,\y+4*\dy)  rectangle (\x+5*\dx,\y-1);
            \node (G) at (\x+4.65*\dx,\dy-0.25) {\large $G$};

            \draw[dashed] (\x-0.5,\y+0.75)  rectangle (\x+4.75*\dx,\y-0.75);
            \node (K) at (\x+4.4*\dx,\y-0.5) {$K_\kappa$};
            \foreach \n in {0,...,3}{
                \node (v\n) at (\x+\n*\dx,\y) [circle, fill, inner sep=1.5pt]{};
                } 
            \draw(v0) -- (v3);
            \foreach \j in {2,...,3}{ 
                        \draw (v0) edge[bend right=\j*15] (v\j);
                }
            \draw (v1) edge[bend left=30] (v3);
            \node (kdots) at (\x+3.75*\dx,\y)  {\tiny $\cdots$};
            
            \foreach \n in {0,...,3}{
                \draw[line width=2pt, white] (v\n) -- (\x+\n*\dx,\y+3*\dy);
                \draw[-{Latex[length=1.5mm]}] (v\n) -- (\x+\n*\dx,\y+3*\dy);
                } 
    \end{tikzpicture}
\caption{One point compactification of an uncountable discrete space as an end-space.}
\label{FIG_onePointCompactification}
\end{figure}

We have seen that $L(X)=e(X)$ holds for end spaces.
In fact, $L(X)=e(X)$ holds for all ray spaces.
Below, we extend \cite[Proposition~2.16]{kurkofka2024representationtheoremendspaces}.
As every end space is a ray space of a special tree, the next proposition is stronger than Proposition~\ref{proposition:extentGraphs}.
However, the presented proof of Proposition~\ref{proposition:extentGraphs} does not rely on this representation theorem, whose proof is highly nontrivial.
Thus, we have decided to present both proofs as some audiences may find one proof or the other more interesting.

\begin{proposition}\label{proposition:extentTrees}
    Let $T$ be a pruned rooted tree and $\kappa$ be an infinite cardinal.
    The following are equivalent:
    \begin{enumerate}[label=(\alph*)]
        \item $L(\mathcal R(T))=\kappa$.
        \item $e(\mathcal R(T))=\kappa$.
        \item $\kappa$ is the least infinite cardinal for which for every $t \in T$, $t$ has at most $\kappa$ many immediate successors in $T$.
    \end{enumerate}
\end{proposition}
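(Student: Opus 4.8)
The plan is to prove this by establishing a cycle of implications, with the combinatorial condition (c) serving as the anchor. Since $e(X) \leq L(X)$ holds for all topological spaces (as recalled in the proof of Proposition~\ref{proposition:extentGraphs}), the implication (a)$\Rightarrow$(b) is immediate once we interpret $\kappa$ as the common value. So the real work is to show (c)$\Rightarrow$(a), i.e. that the branching condition bounds the Lindel\"of degree, and (b)$\Rightarrow$(c), i.e. that a failure of the branching condition produces a large closed discrete set.

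\medskip

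\noindent\textbf{First I would prove (c)$\Rightarrow$(a).} Assume every $t \in T$ has at most $\kappa$ immediate successors, and let $\mathcal U$ be an open cover of $\mathcal R(T)$. For each ray $x \in \mathcal R(T)$, pick a standard basic set $[t_x, F_x] \ni x$ contained in some member of $\mathcal U$; shrinking $F_x$, I may even take $F_x = \emptyset$ when convenient, but in general I keep it to handle tops. The key observation is that the sets $[t]$ for $t$ ranging over successors organize the tree level-by-level: starting from the root, I would recursively cover $\mathcal R(T)$ by choosing, for each node $t$ already ``visited,'' the basic sets $[t, F]$ that were assigned to rays passing through $t$, and when these do not yet cover all rays through $t$, pass to the immediate successors of $t$. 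Since each node has at most $\kappa$ immediate successors and the recursion has countable depth along any branch (actually the depth is the height of the tree, but the cover is built by a tree-indexed transfinite process whose branching is bounded by $\kappa$), the total number of basic sets selected is at most $\kappa^{<\omega}\cdot\kappa = \kappa$. I would make this precise by a pruning/compactness-style argument on $T$: the set of ``uncovered'' rays, if the selected family failed to cover, would yield a branch witnessing an uncovered ray, contradicting that $\mathcal U$ covers $\mathcal R(T)$. This gives a subcover of size $\le \kappa$, hence $L(\mathcal R(T)) \le \kappa$.

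\medskip

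\noindent\textbf{Next I would prove (b)$\Rightarrow$(c), or rather its contrapositive, to close the cycle.} Suppose $\kappa$ is \emph{not} an upper bound, i.e. some $t \in T$ has strictly more than $\kappa$ immediate successors, say $\lambda = \kappa^+ \le |\{\text{immediate successors of } t\}|$. Since $T$ is pruned, each immediate successor $s$ extends to a ray $R_s$ (a maximal chain, or at least a ray through $s$). The sets $[s]$ for distinct immediate successors $s$ of $t$ are pairwise disjoint open sets, and I would select one ray $R_s \in [s]$ for each such $s$. The resulting set $D = \{R_s : s \text{ an immediate successor of } t\}$ has cardinality at least $\kappa^+$; each $R_s$ is separated from the others by $[s]$, so $D$ is discrete, and it is closed because the only possible accumulation point would be a ray through $t$ meeting infinitely many of the $[s]$, which is impossible as a ray is a chain and the immediate successors are pairwise incomparable. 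Hence $e(\mathcal R(T)) \ge \kappa^+$, so (b) fails for $\kappa$. Conversely the easy direction shows: if (c) holds with least value $\kappa$, then by (c)$\Rightarrow$(a) we get $L \le \kappa$, and the failure-to-bound argument applied at $\kappa' < \kappa$ gives $e \ge (\kappa')^+$ for all such $\kappa'$, forcing $e \ge \kappa$; combined with $e \le L \le \kappa$ this pins both to $\kappa$.

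\medskip

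\noindent\textbf{The hard part} will be making the covering argument in (c)$\Rightarrow$(a) rigorous when $T$ has large height, since the naive ``cover level by level'' recursion must be organized as a genuine transfinite induction whose bookkeeping controls the running count of selected basic sets at exactly $\kappa$; the subtlety is that a basic set $[t,F]$ can fail to capture rays that branch off at limit tops, so I must verify that the tops appearing in the $F$'s do not force me to select more than $\kappa$ sets. I expect to handle this by exploiting that each ray is determined by its (bounded-branching) sequence of immediate successors, so the space of rays embeds into a product of $\kappa$-sized fibers, and a Lindel\"of-degree computation on such a product yields the bound directly. The remaining implications are bookkeeping: assembling (c)$\Rightarrow$(a)$\Rightarrow$(b) together with the contrapositive of (b)$\Rightarrow$(c) to conclude the three quantities coincide and equal the least cardinal in (c).
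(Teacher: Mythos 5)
The main implication you need, (c)$\Rightarrow$(a), is not actually proved in your proposal: the ``level-by-level'' covering recursion is left as a sketch, and both of the devices you offer to make it rigorous fail. The count $\kappa^{<\omega}\cdot\kappa$ presupposes that the recursion terminates after countably many steps along each branch, but $T$ may have arbitrary (uncountable) height; and the suggestion that $\mathcal R(T)$ embeds into a product of $\kappa$-sized fibers with a direct Lindel\"of-degree computation does not survive for trees of uncountable height either (nor is $L$ monotone under arbitrary subspaces). The paper does not prove this direction at all: it quotes the equivalence of (a) and (c) from \cite[Proposition~2.16]{kurkofka2024representationtheoremendspaces} and only supplies the new content, namely $L(\mathcal R(T))=e(\mathcal R(T))$. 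If you want a self-contained argument, the workable route is via Proposition~\ref{LEMMA_Cover-Partition}: refine the cover to a partition $([t_i,F_i]:i\in I)$ into standard basic sets, observe (Lemma~\ref{lemma:RaylessPartition}) that $\{t_i:i\in I\}$ is a rayless subtree, and then show by induction on the rank of a rayless tree with branching at most $\kappa$ that it has at most $\kappa$ nodes; this is the bookkeeping your sketch is missing.

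Your (b)$\Rightarrow$(c) contrapositive reaches the right conclusion but for the wrong reason. The danger for closedness is not a ray ``meeting infinitely many of the $[s]$'' --- every ray through $t$ lies in exactly one $[s_0]$, so the only candidate it can accumulate on is the single ray $R_{s_0}$, and this genuinely threatens when $R\subsetneq R_{s_0}$. One must either choose each $R_s$ to be a \emph{minimal} ray through $s$ (as the paper does, so that $R\not\subseteq R_s$ and a subbasic $[u]$ separates), or explicitly use a basic neighborhood $[u,F]$ whose finite set $F$ contains the unique top of $R$ lying in $R_{s_0}$. As written, your justification does not address the actual failure mode. The easy directions ($e\le L$ and the assembly of the cycle) are fine and match the paper.
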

\begin{proof}
    By \cite[Proposition~2.16]{kurkofka2024representationtheoremendspaces}, (a) and (c) are equivalent.

    We show that $L(\mathcal R(T))=e(\mathcal R(T))$.
    Again, it is well known that $L(\mathcal R(T))\geq e(\mathcal R(T))$.
    To see that $L(\mathcal R(T))\leq e(\mathcal R(T))$, let $\kappa$ be an infinite cardinal such that $\kappa<L(\mathcal R(T))$.
    Then there exists $t \in T$ such that $t$ has $\lambda\geq\kappa^+$ many immediate successors, $(t_\alpha: \alpha<\lambda)$.
    For each $\alpha<\lambda$, let $R_\alpha$ be a minimal ray such that $t_\alpha \in R_\alpha$.
    Then $\{R_\alpha: \alpha<\lambda\}$ is easily seen to be discrete.
    It is also closed: if $R$ is any ray distinct from the ones listed, either $t \in R$ or $t\notin R$.
    If $t \notin R$, then $R$ belongs to the open set $\{S \in \mathcal R(T): t \notin S\}$, which is disjoint from $\{R_\alpha: \alpha<\lambda\}$.
    If $t \in R$, then there exists a unique immediate successor $t_\beta$ of $t$ such that $t_\beta \in R$.
    As $R_\beta$ is minimal, $S\not\subseteq R_\beta$, so there exists $s \in S$ such that $s \notin R_\beta$.
    Then $\{S \in \mathcal R(T): s \in S\}$ is an open set containing $R$ and disjoint from $\{R_\alpha: \alpha<\lambda\}$.

    Thus, $e(\mathcal R(T))\geq \lambda\geq \kappa^+>\kappa$.
    This shows that $L(\mathcal R(T))\leq e(\mathcal R(T))$, completing the proof.
\end{proof}

We take the opportunity to prove a simple lemma which is somewhat related to the previous result we will need later.
Corollary~2.17 of \cite{pitz2023characterisingpathraybranch} states that a pruned, rooted tree $T$ has compact ray space if, and only if, every node of $T$ has finitely many immediate successors.
The proof is a consequence of Proposition~2.16 of \cite{pitz2023characterisingpathraybranch}, and the nontrivial direction of the proof relies on the ultraparacompactness of $\mathcal R(T)$.
Below, we show that the hypothesis of prunedness is not needed for this direction, and present a much simpler proof.

\begin{lemma}\label{compactFiniteSuccessors}
    Let $T$ be a rooted tree such that every node of $T$ has finitely many immediate successors.
    Then $\mathcal R(T)$ is compact.
\end{lemma}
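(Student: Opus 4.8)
The plan is to realize $\mathcal{R}(T)$ as a closed subspace of the compact space $2^T\approx\mathcal P(T)$ and then invoke Tychonoff's theorem, the finite-branching hypothesis being needed only at one precise spot. Throughout, write $[t]=\{A\in 2^T: t\in A\}$ for the corresponding clopen subset of $2^T$. First I would consider the set $\mathcal C$ of all downward-closed chains of $T$, as a subspace of $2^T$. Being a downward-closed chain is a conjunction of conditions each constraining only one or two coordinates: downward closure requires, for every pair $s<t$, that $t\in R\Rightarrow s\in R$ (the clopen set $(2^T\setminus[t])\cup[s]$), and being a chain requires, for every incomparable pair $s,t$, that not both lie in $R$ (the clopen set $(2^T\setminus[s])\cup(2^T\setminus[t])$). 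Intersecting all these clopen constraints exhibits $\mathcal C$ as closed in $2^T$, hence compact.

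Next I would identify the difference $\mathcal C\setminus\mathcal R(T)$. A downward-closed chain $R$ with a maximal element $t$ must equal $\downward{t}$: downward closure gives $\downward{t}\subseteq R$, while being a chain with maximum $t$ forbids any element incomparable to $t$ or strictly above it. Conversely each $\downward{t}$ is a downward-closed chain with maximum $t$, so it is not a ray. Thus $\mathcal C\setminus\mathcal R(T)=\{\downward{t}:t\in T\}$ precisely (the empty chain has no maximal element, hence is a ray, and so causes no trouble).

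The heart of the argument, and the only place where finite branching is used, is to show that each $\downward{t}$ is isolated from $\mathcal R(T)$ inside $\mathcal C$. The key observation is that any ray $R$ containing $t$ must contain an immediate successor of $t$: since $R$ is a downward-closed chain it is well-ordered, and since it has no maximal element the set $\{s\in R: s>t\}$ is nonempty and so has a least element $s_0$, which must be an immediate successor of $t$ in $T$ (anything strictly between $t$ and $s_0$ would lie in $R$ by downward closure, contradicting minimality of $s_0$). Now, because $t$ has only finitely many immediate successors $t_1,\dots,t_k$, the set $[t]\cap\bigcap_{i\le k}(2^T\setminus[t_i])$ is a genuine open neighborhood of $\downward{t}$; by the observation it meets no ray, so $\downward{t}\notin\overline{\mathcal R(T)}$.

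Putting these together, $\mathcal C\setminus\mathcal R(T)$ is open in $\mathcal C$, so $\mathcal R(T)$ is closed in $\mathcal C$, hence closed in $2^T$, hence compact. The main obstacle is exactly this third step: if $t$ had infinitely many immediate successors, the set $[t]\cap\bigcap_i(2^T\setminus[t_i])$ would be an infinite intersection of subbasic sets and need not be open, so $\downward{t}$ could genuinely lie in $\overline{\mathcal R(T)}$ and the argument would (correctly) break down. This pinpoints where the hypothesis is indispensable.
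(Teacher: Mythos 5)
Your proof is correct and follows essentially the same route as the paper's: realize $\mathcal R(T)$ as a closed subspace of the compact space $2^T$ via Tychonoff, with the finite-branching hypothesis invoked at exactly the same point as in the paper, namely to produce an open neighborhood of a downward-closed chain with maximum $t$ that avoids every ray. The only divergence is a harmless convention about whether $\emptyset$ counts as a ray (the paper excludes it via the clopen set $\set{a : r\notin a}$, you include it); the argument goes through either way.
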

\begin{proof}
    Consider $\mathcal P(T)$, the set of all subsets of $T$, with the topology induced by the product topology on $2^T$ -- thus, the sets of the form $\{a \in \mathcal P(T): t \in a\}$ and $\{a \in \mathcal P(T): t \notin a\}$, for $t \in T$, form a subbasis of $\mathcal P(T)$.
    As $\mathcal P(T)\approx 2^T$ is compact by Tychonoff's theorem, it suffices to show that $\mathcal R(T)$ is a closed subset of $\mathcal P(T)$.

    Let $r$ be the root of $T$.

    Let $b \in \mathcal P(T)\setminus \mathcal R(T)$.
    Then $b$ is not a ray.

    \textbf{Case 1:} $b=\emptyset$. Then $\{a \in \mP(T): r \notin a\}$ is a neighborhood of $b$ disjoint from $\mathcal R(T)$.

    \textbf{Case 2:} $b\neq\emptyset$ and is not closed downwards. Then there exists $t \in b$ and $s < t$ such that $s \notin b$.
    Then $\{a \in \mP(T): t \in a, s \notin a\}$ is a neighborhood of $b$ disjoint from $\mathcal R(T)$.

    \textbf{Case 3:} $b\neq \emptyset$ is closed downwards, but is not a chain.
    Then there exist $t, s \in b$ incomparable.
    Then $\{a \in \mP(T): t \in a, s \in a\}$ is a neighborhood of $b$ disjoint from $\mathcal R(T)$.

    \textbf{Case 4:} $b\neq \emptyset$ is closed downwards, a chain, but has a maximum element.
    Let $m$ be the maximum element of $b$.
    Let $F$ be the set of all immediate successors of $m$ in $T$, which is finite.
    Then 
    $$\{a \in \mP(T): m \in a, s \notin a \text{ for all } s \in F\}$$ 
    is a neighborhood of $b$ disjoint from $\mathcal R(T)$.

    This completes the proof.
\end{proof}

We finish this section with a simple application of Lemma~\ref{compactFiniteSuccessors} to prove another fact that will be needed later.
Given $X\subseteq \mR(T)$, let $T_X = \bigcup_{x\in X} x = \bigcup X$.
Clearly, $T_X$ is a subtree of $T$, and therefore $\mR(T_X)\subseteq \mR(T)$ and the ray space topology of $\mR(T_X)$ agrees with the subspace topology inherited from $\mR(T)$.
\begin{corollary}\label{LEMMA_Compact-DownClosure}
    If $T$ is a rooted tree and $K\subseteq\mR(T)$ is compact, $\mR(T_K)$ is compact.
\end{corollary}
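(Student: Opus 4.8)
The plan is to reduce the statement to \cref{compactFiniteSuccessors}. That lemma produces compactness of a ray space from a purely combinatorial hypothesis, so it suffices to verify that every node of $T_K$ has only finitely many immediate successors \emph{in $T_K$}, and then apply it to the rooted tree $T_K$. (Here $T_K=\bigcup K$ is downward closed in $T$, hence an order tree under the induced order; if $K\neq\emptyset$ it is rooted, since every nonempty ray contains the root of $T$, and if $K=\emptyset$ the claim is trivial.)

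So I would fix a node $t\in T_K$ and argue by contradiction, assuming $t$ has infinitely many pairwise distinct immediate successors $(s_n:n\in\omega)$ in $T_K$. For each $n$ I pick a ray $x_n\in K$ with $s_n\in x_n$; then $t\in x_n$ by downward-closedness. The combinatorial heart of the argument is a rigidity observation: for any $s>t$ the set $\{u:t<u\le s\}$ is a nonempty subset of the well-ordered set $\downward{s}$ and therefore has a least element $m(s)$, and whenever $s\in x_n$ one necessarily has $s_n=m(s)$. Indeed, $s\in x_n$ forces $s_n\le s$ (otherwise $s$ would be a point of $T_K$ strictly between $t$ and $s_n$), while $m(s)\le s\in x_n$ gives $m(s)\in x_n$ by downward-closedness; minimality of $s_n$ among immediate successors of $t$ in $T_K$ then yields $s_n=m(s)$. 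Since $m(s)$ does not depend on $n$ and the $s_n$ are distinct, it follows that for each fixed $s>t$ the subbasic open set $[s]$ contains $x_n$ for at most one index $n$.

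Next I would play compactness of $K$ against an explicit cover. For each $y\in K$ with $t\notin y$ set $U_y=\mathcal R(T)\setminus[t]$; for each $y\in K$ with $t\in y$ choose some $s_y\in y$ with $s_y>t$ (possible since $y$ has no maximal element) and set $U_y=[s_y]$. These are open and cover $K$, so there is a finite subcover. The set $\mathcal R(T)\setminus[t]$ contains none of the $x_n$ (each has $t\in x_n$), and each $[s_y]$ contains at most one $x_n$ by the rigidity observation; hence the finite subcover meets $\{x_n:n\in\omega\}$ in only finitely many points. This contradicts $\{x_n:n\in\omega\}\subseteq K$, so $t$ has finitely many immediate successors in $T_K$, and \cref{compactFiniteSuccessors} finishes the proof.

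The step I expect to be most delicate is exactly the rigidity observation, because immediate successors in the subtree $T_K$ need not be immediate successors in $T$, so one cannot directly read off $s_n$ from a single ray. The resolution is that membership $s\in x_n$ with $t<s$ pins down the immediate successor of $t$ along $x_n$ through the well-ordering of $\downward{s}$, making each subbasic set $[s]$ isolate at most one index; the remainder is routine bookkeeping with downward-closedness and the chain property of rays.
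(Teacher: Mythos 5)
Your proof is correct and follows essentially the same route as the paper: both reduce to \cref{compactFiniteSuccessors} by showing each $t\in T_K$ has finitely many immediate successors in $T_K$, using compactness of $K$ and the fact that each ray of $K$ through $t$ determines a unique such successor. The paper just packages your contradiction-and-cover argument more compactly, by observing that $([s]\cap K: s\in F)$ is a partition of the compact set $K\cap[t]$ into nonempty clopen pieces and hence finite.
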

\begin{proof}
    It suffices to see that every node of $T_K$ has only finitely many successors in $T_K$.
    Let $t \in T_K$. Then $K\cap [t]$ is a closed subset of $K$, hence, $K\cap [t]$ is compact.
    Let $F$ be the set of all successors of $t$ in $T_K$.
    Notice that $([s]\cap K: s\in F)$ is a partition of $K\cap [t]$ into pairwise disjoint nonempty clopen sets, and therefore must be finite.
    Thus, $F$ is finite.
\end{proof}
\section{Ray spaces are \texorpdfstring{$D$}{D}-spaces}

In \cite{pitz2023characterisingpathraybranch}, it has been shown that every ray space is paracompact -- that is, that every open cover has a locally finite refinement.
More specifically, the following result was proved:
\begin{proposition}[{\cite[Proposition~2.15.]{pitz2023characterisingpathraybranch}}]\label{LEMMA_Cover-Partition}
    Suppose $T$ is an order theoretic tree and $\mathcal{U}$ is an open cover of $\mathcal{R}(T)$.
    Then there is a partition $\mathcal{P}$ of $\mathcal{R}(T)$ comprised with standard basic sets refining $\mathcal{U}$ (that is, for every $A \in \mathcal P$ there exists $U \in \mathcal{U}$ such that $A \subseteq U$).
\end{proposition}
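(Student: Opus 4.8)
The plan is to reduce the statement to the problem of extracting a partition from a cover by standard basic sets, and then to build that partition by transfinite recursion along the tree order. First I would invoke the cited local-basis description of $\mathcal{R}(T)$: since $\mathcal{U}$ is an open cover, for every ray $R$ there are a member $U_R \in \mathcal{U}$ and a standard basic set $[t_R, F_R]$ with $R \in [t_R, F_R] \subseteq U_R$ (with $F_R$ a finite set of tops of $R$). Letting $\mathcal{S}$ be the family of all standard basic sets contained in some member of $\mathcal{U}$, the set $\mathcal{S}$ covers $\mathcal{R}(T)$, and it suffices to extract from $\mathcal{S}$ a partition of $\mathcal{R}(T)$. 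The structural fact that makes this feasible is that standard basic sets are \emph{clopen} and behave well under intersection: if $[t,F]$ and $[t',F']$ meet, then $t$ and $t'$ are comparable in the tree order, say $t \le t'$, and their intersection is again a standard basic set based at $t'$ (only the tops of $F$ lying above $t'$ contribute). Thus $\mathcal{S}$ is, modulo nonempty intersections, organized by the tree order on its base points, which is precisely what permits a tree-recursion.

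Next I would construct the partition by transfinite recursion on the rank (level) $\alpha$ of nodes, maintaining the invariant: a family $\mathcal{P}$ of pairwise disjoint members of $\mathcal{S}$ (the pieces decided so far) together with an antichain $A_\alpha$ of level-$\alpha$ nodes, such that the decided pieces and the cones $\{[t] : t \in A_\alpha\}$ together partition $\mathcal{R}(T)$; equivalently, every ray not yet placed in $\bigcup\mathcal{P}$ meets $A_\alpha$ in exactly one node. One starts with $A_0$ the set of minimal nodes of $T$ and $\mathcal{P}=\emptyset$. When processing an active node $t \in A_\alpha$, I would peel off, whenever possible, a single piece $[t,F]\subseteq U$ for some $U\in\mathcal{U}$, commit it to $\mathcal{P}$, and promote the excluded directions — the cones over the tops in $F$ together with the cones over the successors of $t$ along which $[t]$ splits — to the next active level, thereby advancing the recursion. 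The remaining verifications (that committed pieces stay pairwise disjoint and inside members of $\mathcal{U}$, which follows from the intersection structure of the first step) are routine transfinite-induction checks, and at the end $A=\emptyset$ leaves $\mathcal{P}$ as the desired partition.

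The main obstacle is the treatment of rays possessing a \emph{top}, i.e. rays of the form $R = \odownward{s}$. Such a ray consists only of nodes of rank strictly below $\operatorname{rank}(s)$, so $R$ ``disappears'' at the limit level $\operatorname{rank}(s)$ and therefore must already have been committed to some piece of $\mathcal{P}$ at a strictly lower level — before the recursion reaches its top. Maintaining the invariant across limit stages is exactly the delicate point: at a limit level $\alpha$ an unplaced ray with a top at $\alpha$ would have no node in $A_\alpha$ to represent it, so the construction must guarantee that every such ray was placed earlier. This is where the initial choices pay off: each ray $R$ with a top is placed no later than the processing of $t_R$ (of rank $< \operatorname{rank}(s)$), since $[t_R,F_R]\subseteq U_R$ isolates $R$ from the sibling directions inside a single cover element; one must check that $R$ is indeed absorbed into $\mathcal{P}$ by the time that level is completed, whether $t_R$ is still active or has already been swallowed by an earlier piece. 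The bookkeeping ensuring that no ray survives past its top is the real heart of the argument; it is a Sorgenfrey-line–type phenomenon already visible when $T$ is a single chain of limit order type, and it is the step I expect to require the most care.
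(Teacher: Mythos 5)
First, a point of reference: the paper does not prove this proposition at all --- it is imported verbatim from the cited source --- so there is no in-paper argument to measure yours against, and your proposal has to stand on its own. As written, it has a genuine gap, and it is exactly the one you flag in your last paragraph: the instruction ``peel off, whenever possible, a single piece $[t,F]\subseteq U$'' does not say \emph{which} piece, and without a selection rule the recursion can commit itself into a corner from which no partition exists. Concretely, let $T$ be a single chain $\{t_\beta:\beta<\omega\cdot\omega\}$ with the ordinal order. Its rays are $R_n=\{t_\beta:\beta<\omega\cdot n\}$ for $1\le n<\omega$ together with $R_\omega=T$, and $\mathcal R(T)$ is homeomorphic to $\omega+1$ with $R_\omega$ the unique limit point. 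Take $\mathcal U=\{\{R_n\}:n\ge 1\}\cup\{[t_0]\}$, a legitimate open cover (note $\{R_n\}=[t_{\omega\cdot(n-1)},\{t_{\omega\cdot n}\}]$). At the root a legitimate peel is $[t_0,\{t_\omega\}]=\{R_1\}\in\mathcal U$, which promotes $[t_\omega]$; there a legitimate peel is $[t_\omega,\{t_{\omega\cdot 2}\}]=\{R_2\}$; and so on. After $\omega$ steps every $R_n$ has been committed as a singleton, yet $R_\omega$ lies in no committed piece, and since every standard basic set containing $R_\omega$ is of the form $[t_\beta]$ (the only tops in $T$ are the $t_{\omega\cdot n}$, all of which lie in $R_\omega$) and hence contains infinitely many already-committed $R_n$'s, the partial partition cannot be completed. (Observe that $R_\omega$ has no top, so the obstruction is not confined to rays with tops at limit levels, which is how you frame it; any ray whose level set has limit order type can be starved in this way.) The proposition is of course true for this $T$ --- one must peel $[t_0]$ itself --- but the example shows that the rule deciding which piece to peel, together with a proof that under that rule every ray is captured before its levels run out, is not ``bookkeeping'': it is the entire content of the proposition, and your proposal does not supply it.

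A smaller inaccuracy: the intersection of two meeting standard basic sets need not again be a standard basic set. If $s$ and $s'$ are tops of two \emph{different} rays through a node $t'$, then $[t',\{s\}]\cap[t',\{s'\}]=[t']\setminus([s]\cup[s'])$ can be nonempty while failing to be of the form $[u,G]$ with $G$ a finite set of tops of a single ray, which is what the paper's definition of a standard basic set requires. In your scheme this claim is not actually needed for the disjointness of the committed pieces (that follows from the active cones being pairwise disjoint at every stage), but it should not be asserted. The overall architecture --- reduce to standard basic sets, then recurse up the tree maintaining a decided part and an active antichain --- is a reasonable skeleton; what is missing is its heart.
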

A related property is that of being a $D$-space.
A topological space $X$ is a \emph{$D$-space} if for every open neighborhood assignment $(\mathcal{U}_x:x\in X)$ (that is, for every $x\in X$, $\mathcal{U}_x$ is an open set containing $x$), there exists a closed discrete set $D\subseteq X$ such that $\bigcup_{x\in D} \mathcal{U}_x = X$.

The concept of $D$-spaces originated in the 1970s \cite{van1979some}, and has since become a central topic in set-theoretic topology.
For a comprehensive overview, see \cite{gruenhage2011survey}.

Despite significant progress, several fundamental questions about $D$-spaces remain unresolved. Notably, it is still an open problem whether every regular paracompact space is a $D$-space.
Even more surprisingly, whether every regular Lindelöf space is a $D$-space is also unanswered.
This longstanding problem appears as Problem 14 in \cite{hrusak2007twenty}.
Moreover, under the combinatorial principle $\diamondsuit$, there exists a hereditarily Lindelöf Hausdorff space that fails to be a $D$-space \cite{soukup2012counterexample}, highlighting the subtlety of the property.

In this section, we show that every ray space is a $D$-space, thereby identifying a new and natural class of paracompact spaces that satisfy this property.
In particular, this result implies that every end space of a graph is a $D$-space, further enriching the landscape of known examples.

\begin{proposition}\label{PROP_RaySpacesAreDSpaces}
Every ray space is a $D$-space.
\end{proposition}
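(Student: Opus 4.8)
The plan is to use the partition result, Proposition~\ref{LEMMA_Cover-Partition}, as the main tool. Given an open neighborhood assignment $(\mathcal{U}_x : x \in \mathcal{R}(T))$, the family $\{\mathcal{U}_x : x \in \mathcal{R}(T)\}$ is an open cover of $\mathcal{R}(T)$. By Proposition~\ref{LEMMA_Cover-Partition}, there is a partition $\mathcal{P}$ of $\mathcal{R}(T)$ into standard basic sets that refines this cover. The idea is to pick, from each cell $A \in \mathcal{P}$, a single witnessing point whose assigned neighborhood covers $A$, and to argue that the resulting set of chosen points is closed and discrete. Since the cells of $\mathcal{P}$ partition the space and each chosen neighborhood contains its cell, the union of the chosen neighborhoods will be all of $\mathcal{R}(T)$, which is exactly what the $D$-space condition demands.

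\medskip

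More precisely, for each $A \in \mathcal{P}$, since $\mathcal{P}$ refines the cover, there is some $x_A \in \mathcal{R}(T)$ with $A \subseteq \mathcal{U}_{x_A}$; I would like to choose the witness point to actually lie in $A$, so first I would arrange that the refining point can be taken inside its own cell — this is where I must be careful, since a priori $x_A$ need not belong to $A$. The cleanest route is to select any ray $d_A \in A$ (each cell is nonempty) and set $D = \{d_A : A \in \mathcal{P}\}$. Because each standard basic cell $A$ is itself an open set containing $d_A$ and the cells are pairwise disjoint, the set $D$ meets each cell in exactly one point, so every point of $D$ is isolated in $D$ by the clopen cell containing it; hence $D$ is discrete. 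For closedness, note that any ray $R \in \mathcal{R}(T)$ lies in a unique cell $A_0$, and $A_0$ is an open neighborhood of $R$ meeting $D$ in the single point $d_{A_0}$, so $R$ cannot be a limit point of $D$ unless $R = d_{A_0}$; thus $D$ is closed and discrete.

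\medskip

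Finally, to verify the covering condition $\bigcup_{d \in D} \mathcal{U}_d = \mathcal{R}(T)$: given any $R \in \mathcal{R}(T)$, let $A_0 \in \mathcal{P}$ be the cell containing $R$, with witness $d_{A_0} \in A_0$. By the refinement property $A_0 \subseteq \mathcal{U}_{y}$ for the refining point $y$, but I need the containment to hold for $\mathcal{U}_{d_{A_0}}$ specifically. \textbf{This is the crux and the main obstacle}: the partition refines the cover in the sense that each cell sits inside \emph{some} $\mathcal{U}_y$, but the $D$-space definition forces me to cover using neighborhoods assigned to the chosen points themselves. The resolution is to choose the witness point carefully: since $A_0 \subseteq \mathcal{U}_y$ and $d_{A_0} \in A_0 \subseteq \mathcal{U}_y$, I should instead select $d_{A_0}$ to be precisely such a $y$ whenever $y \in A_0$, or more robustly, observe that when applying Proposition~\ref{LEMMA_Cover-Partition} to the cover $\{\mathcal{U}_x\}$ one obtains for each cell $A$ an index $x$ with $A \subseteq \mathcal{U}_x$, and I can take the witness $d_A := x$ provided $x \in A$. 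To guarantee $x \in A$, I would apply the partition proposition not to the bare cover but track the assignment: each cell $A$ arising in the construction is contained in $\mathcal{U}_{x}$ for a point $x$ that can be taken in $A$ itself, because the standard basic set $[t,F]$ refining $\mathcal{U}_x$ is a neighborhood of $x$ and one may re-center the partition so that $x$ lies in its own refining cell. With the witnesses so chosen, $\mathcal{U}_{d_{A_0}} \supseteq A_0 \ni R$, giving $R \in \bigcup_{d \in D}\mathcal{U}_d$, and the proof concludes.
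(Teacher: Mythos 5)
You have correctly located the crux, but your resolution of it is not a proof. Proposition~\ref{LEMMA_Cover-Partition} only guarantees that each cell $A$ of the partition is contained in $\mathcal{U}_y$ for \emph{some} $y$, and nothing forces such a $y$ to lie in $A$. Your final paragraph asserts that ``one may re-center the partition so that $x$ lies in its own refining cell,'' but no argument is given, and this assertion is exactly the difficulty that separates the $D$-space property from paracompactness (which is why the paper devotes a nontrivial recursion to it rather than quoting Proposition~\ref{LEMMA_Cover-Partition}). To see the failure concretely, take $\mathcal{R}(T)$ homeomorphic to a convergent sequence $\omega+1$, assign $\mathcal{U}_n=\{n\}$ to each isolated point $n$ and $\mathcal{U}_\omega=$ the whole space to the limit; a clopen cell such as $\{1,2,3\}$ refines the cover (it sits inside $\mathcal{U}_\omega$) yet contains no point $d$ with $\{1,2,3\}\subseteq\mathcal{U}_d$. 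So a partition produced by Proposition~\ref{LEMMA_Cover-Partition} may well contain cells with no internal witness, and choosing $d_A\in A$ arbitrarily then fails to cover $A$ by $\mathcal{U}_{d_A}$. Of course in this example one can refine further, but proving that a suitable refinement always exists \emph{is} the theorem; it cannot be dismissed in one sentence.

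The paper's proof supplies precisely the missing ingredient: it runs a transfinite recursion choosing, at each stage, a $\subseteq$-\emph{maximal} ray $e_\alpha$ in the part of $\mathcal{R}(T)$ not yet covered by the previously assigned neighborhoods $[t_{e_\beta},F_{e_\beta}]$. Maximality is what makes the argument go through: it yields the structural claim that any two of the chosen basic sets are either disjoint or strictly nested (with control on where $t_\beta$ sits relative to $e_\alpha$), which in turn produces a subfamily of pairwise disjoint cells, each of which contains its own chosen point $e_\xi$ and is covered by $\mathcal{U}_{e_\xi}$. That is the step your proposal needs and does not have; without some substitute for the maximal-ray selection (or another argument that witnesses can be internalized), the proof is incomplete.
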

\begin{proof}
    Let $(\mathcal{U}_x:x\in \mathcal{R}(T))$ be an open neighborhood assignment for $\mathcal{R}(T)$.
    We may assume that, for each $x \in \mathcal R(T)$, $\mathcal{U}_x$ is a standard basic open set of the form $[t_x,F_x]$.

    We recursively define $(e_\alpha:\alpha<\kappa^+)$, where $\kappa$ is the cardinality of $\mathcal{R}(T)$, as follows:
    \begin{enumerate}
        \item If $\bigcup_{\beta<\alpha} [t_{e_\beta},F_{e_\beta}] = \mathcal{R}(T)$, let $e_\alpha$ be arbitrary.
        \item Otherwise, let $e_\alpha$ be an arbitrary $\subseteq$-maximal ray of $\mathcal{R}(T)\setminus \bigcup_{\beta<\alpha} [t_{e_\beta},F_{e_\beta}]$ (which exists by Zorn's Lemma)
    \end{enumerate}
    Let $\delta<\kappa^+$ be the first ordinal such that $\bigcup_{\beta<\delta} [t_{e_\beta},F_{e_\beta}] = \mathcal{R}(T)$.
    
    Write $t_\beta=t_{e_\beta}$ and $F_\beta=F_{e_\beta}$ for each $\beta<\delta$.

    \begin{claim}
        For every $\beta<\alpha$, the following are complementary:
        \begin{itemize}
            \item $t_\alpha$ and $t_\beta$ are incompatible (in particular, $[t_{\beta},F_{\beta}]\cap [t_{\alpha},F_{\alpha}]= \emptyset$)
            \item $[t_{\beta},F_{\beta}]\subsetneq [t_{\alpha},F_{\alpha}]$ and $t_\beta \notin e_\alpha$ (in particular, $t_\alpha<t_\beta$).
        \end{itemize}
    \end{claim}
    \begin{proof}
            Suppose that $t_\alpha$ and $t_\beta$ are compatible, so that either $t_\beta\leq t_\alpha$ or $t_\alpha<t_\beta$.
            Since $e_\alpha\notin [t_{\beta},F_{\beta}]$, it follows that either $e_\alpha \cap F_\beta\neq \emptyset$ or $t_\beta \notin e_\alpha$.
            However, by the maximality of $e_\beta$, the first case cannot happen, therefore $t_\beta \notin e_\alpha$.
            This implies that $t_\alpha < t_\beta$.

            Now, given $x \in [t_{\beta},F_{\beta}]\cap [t_{\alpha},F_{\alpha}]$, we have $t_\alpha \in x$.
            If $x\cap F_\alpha\neq \emptyset$ then $x$ extends $e_\alpha$, which implies that $t_\beta\in x\setminus e_\alpha$, thus, $t_\beta$ is above some element of $F_\alpha$, implying that $[t_{\beta},F_{\beta}]\cap [t_{\alpha},F_{\alpha}]=\emptyset$, a contradiction.
    \end{proof}
    
    Now let 
    $$I=\{\xi<\delta: \forall \beta\in \delta\, \left([t_\beta, F_\beta]\cap [t_\xi,F_\xi]=\emptyset \;\vee\; [t_\beta, F_\beta]\subsetneq [t_\xi,F_\xi] \;\vee\;\xi=\beta\right)\}.$$

    We claim that $\{[t_\xi,F_\xi]:\xi\in I\}$ covers $\mathcal{R}(T)$.
    Indeed, given $x\in \mathcal{R}(T)$, let $\xi<\delta$ be the maximum such that $x\in [t_\xi,F_\xi]$.
    Such a $\xi$ exists as, otherwise, the above claim tells us that we would have an infinite decreasing sequence of elements of $T$.
    Again, by the claim, $\xi \in I$.

    Moreover, by the definition of $I$, if $\xi, \xi' \in I$ are distinct then $[t_\xi,F_\xi]\cap [t_{\xi'},F_{\xi'}]=\emptyset$.
    Then $\bigcup_{\xi\in I} [t_\xi,F_\xi]$ is a covering of $\mathcal{R}(T)$ by pairwise disjoint open sets.
    Finally, $\{e_\xi:\xi\in I\}$ is a closed discrete set as, for every $x \in \mathcal R(T)$, there exists $\xi\in I$ such that $x\in [t_\xi,F_\xi]$, and this open set intersects exactly one point of $\{e_\xi:\xi\in I\}$, namely $e_\xi$.
\end{proof}

It is well known and easy to verify that $e(X)=L(X)$ holds for $D$-spaces.
Thus, this gives us an alternate proof of Proposition~\ref{proposition:extentTrees}.

Furthermore, in view of the representation \myref{theorem:representation}:
\begin{corollary}\label{PROP_EndSpacesAreDSpaces}
Every end space is a $D$-space.
\end{corollary}
\section{On the Rothberger property and scattered ray spaces} \label{sec:rothberger}

While it is not known whether the Lindelöfness guarantees the $D$-space property, several stronger covering properties are known to do so.
The Menger property generalizes $\sigma$-compactness and is strictly stronger than Lindelöfness and, as shown in \cite{aurichi2010d}, it also implies the $D$-space property.
It was invented by Menger in \cite{menger2011einige}, who conjectured that, for metric spaces, it is equivalent to $\sigma$-compactness -- a once longstanding problem whose answer is now known to be false \cite{miller1988some}.

The Rothberger property was defined in \cite{rothberger1938verscharfung} in the context of the study of strengthenings of measure zero sets, and can be seen as a natural strengthening of the Menger property.
For metrizable spaces, it is equivalent to having strong measure zero with respect to every metric which induces its topology \cite{miller1988some}.

In this section, we investigate when ray spaces and end spaces of graphs are Rothberger, providing combinatorial and topological characterizations of this property.
We postpone the discussion on the Menger property to the next section.

\begin{definition}
    We say that a topological space $X$ is \emph{Rothberger} if for every sequence $(\mathcal{U}_n: n\in\omega)$ of open covers of $X$, there exists a sequence $(U_n: n\in\omega)$ such that $U_n\in \mathcal{U}_n$ for all $n\in\omega$ and $X=\bigcup_{n\in\omega} U_n$.

    We say that $X$ is \emph{Menger} if for every sequence $(\mathcal{U}_n: n\in\omega)$ of open covers of $X$, there exists a sequence $(\mathcal{F}_n: n\in\omega)$ such that each $\mathcal{F}_n\subset \mathcal{U}_n$ is finite and $X=\bigcup_{n\in\omega} \bigcup \mathcal{F}_n$.
\end{definition}

One readily sees that Rothberger spaces are Menger, and Menger spaces are Lindelöf.
The Rothberger property is actually much stronger than the Menger property: every compact space is easily seen to be Menger, however, the compact space $2^\omega$ is not Rothberger.

We start by showing the following characterization, which will be useful later.

\begin{theorem}\label{theorem:ScatteredRays}
    Let \( T \) be a rooted tree. The following are equivalent:
    \begin{itemize}
        \item[(a)] $\mathcal{R}(T)$ is scattered (that is, every non-empty subset of $\mathcal R(T)$ has an isolated point).
        \item[(b)] $\mathcal{R}(T)$ does not contain a copy of the Cantor space.
        \item[(c)] $T$ contains no subset which is order-isomorphic to the binary tree $2^{<\omega}$.
    \end{itemize}
\end{theorem}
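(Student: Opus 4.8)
The plan is to prove the three equivalences by establishing the cycle $(c) \Rightarrow (a) \Rightarrow (b) \Rightarrow (c)$, where the first and third implications carry the combinatorial weight and the middle one is a soft topological fact.

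The implication $(a) \Rightarrow (b)$ is immediate: the Cantor space $2^\omega$ has no isolated points, so it is not scattered, and scatteredness is hereditary (every subspace of a scattered space is scattered). Hence if $\mathcal{R}(T)$ is scattered it cannot contain a homeomorphic copy of the Cantor space. I would dispatch this in a line or two, perhaps recalling why scatteredness passes to subspaces (a nonempty subset $A$ of a copy $C$ of $2^\omega$ inside $\mathcal{R}(T)$ is a nonempty subset of the scattered space $\mathcal{R}(T)$, hence has a point isolated relative to $\mathcal{R}(T)$, which is then isolated relative to $A$).

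For $(b) \Rightarrow (c)$, I would argue the contrapositive: assuming $T$ contains a subset $S$ order-isomorphic to $2^{<\omega}$, I construct a copy of the Cantor space inside $\mathcal{R}(T)$. The natural candidate is the set of rays $\{R_f : f \in 2^\omega\}$, where for each branch $f$ of the embedded binary tree, $R_f$ is a minimal ray containing the chain $\{s_{f\restriction n} : n \in \omega\}$ (the images under the embedding $s\colon 2^{<\omega} \to T$ of the initial segments of $f$). Using the subbasic clopen sets $[t]$ and their complements, the map $f \mapsto R_f$ should be a continuous injection, and since $2^\omega$ is compact and $\mathcal{R}(T)$ is Hausdorff, it is an embedding. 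The verification that distinct branches give distinct rays separated by appropriate $[s_w]$ sets, and that the map is continuous, is the technical heart here; I would lean on the branching at each node $s_w$ producing two incompatible successors $s_{w^\frown 0}, s_{w^\frown 1}$ to separate the images. One mild subtlety is that the embedded $2^{<\omega}$ need not be a \emph{subtree} of $T$ (the nodes $s_w$ need not be at successor heights nor have the chains below them match up), but working with minimal rays through the chains $\bigcup_n \odownward{s_{f\restriction n}}$ sidesteps this.

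The implication $(c) \Rightarrow (a)$ is where I expect the main obstacle, and I would again argue the contrapositive: if $\mathcal{R}(T)$ is not scattered, produce an order-isomorphic copy of $2^{<\omega}$ in $T$. Non-scattered means there is a nonempty $A \subseteq \mathcal{R}(T)$ with no isolated point, i.e.\ $A$ is dense-in-itself (crownless). The goal is to build, by recursion on $2^{<\omega}$, a family of nodes $(s_w)_{w \in 2^{<\omega}}$ in $T$ together with nested standard basic sets $[t_w, F_w]$ so that each $[t_w, F_w]$ meets $A$ and the two children $[t_{w^\frown 0}, F_{w^\frown 0}], [t_{w^\frown 1}, F_{w^\frown 1}]$ are disjoint and both meet $A$ — the branching being possible precisely because no point of $A$ is isolated, so no standard basic neighborhood containing a point of $A$ can be a singleton within $A$, forcing two rays in $A \cap [t_w, F_w]$ that diverge at some node. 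That divergence node supplies the pair of incompatible successors that realizes the binary branching, and reading off the nodes $t_w$ (or the divergence points) yields the embedding of $2^{<\omega}$ into $T$ under the tree order. The delicate points will be: ensuring at each splitting step that the two successor basic sets are genuinely disjoint (achieved by choosing $t_{w^\frown 0}, t_{w^\frown 1}$ to be incompatible nodes lying on two distinct rays of $A$, possibly after deleting finitely many tops), confirming the resulting $2^{<\omega}$-indexed nodes are pairwise correctly ordered so the map is an order-embedding rather than merely an injection, and handling tops correctly so that the local basis description from the cited \Cref{theorem:lindelof}-adjacent lemma applies cleanly. I would isolate the branching step as the crux and verify it carefully before assembling the recursion.
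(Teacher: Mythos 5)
Your implications (a)$\Rightarrow$(b) and (b)$\Rightarrow$(c) coincide with the paper's proof: the first is the same hereditary-scatteredness remark, and for the second the paper constructs exactly the map $f\mapsto R_f$ you describe, checks injectivity and continuity, and concludes via compactness of $2^\omega$. Where you genuinely diverge is (c)$\Rightarrow$(a): the paper argues this \emph{directly}, by a Cantor--Bendixson-type derivative on the tree itself (iteratively deleting the nodes $s$ for which the set of nodes above $s$ is not a chain, showing under (c) that the derivative stabilizes at $\emptyset$, and using the resulting rank to locate an isolated point of an arbitrary nonempty $X\subseteq\mathcal{R}(T)$), whereas you take the contrapositive and fuse a copy of $2^{<\omega}$ out of a dense-in-itself set $A$. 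Your route is viable and arguably more standard topologically, while the paper's yields a rank decomposition that foreshadows the $\partial_K^\alpha$ machinery of its Menger section; but as written your key step has a real gap.

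The gap is in the branching step. From ``no point of $A\cap[t_w,F_w]$ is isolated'' you infer the existence of ``two rays in $A\cap[t_w,F_w]$ that diverge at some node.'' Two distinct rays of an order tree need not diverge: they can be nested, $x\subsetneq y$, in which case there are no incomparable nodes $t_x\in x\setminus y$, $t_y\in y\setminus x$ on which to branch. What you must show is that if \emph{every} pair of rays in $A\cap[t,F]$ is nested, then some point of $A$ is isolated, contradicting density-in-itself. This is true but needs an argument you have not supplied: such a family is well-ordered by $\subseteq$ (chains in trees are well-ordered, and the family consists of initial segments of the chain $\bigcup(A\cap[t,F])$); let $x_0$ be its $\subseteq$-minimum; for every other $y$ the least element $s_y$ of $y\setminus x_0$ is a top of $x_0$; and all these $s_y$ coincide (if $y\subseteq y'$ then $s_{y'}\le s_y$ and both are tops of $x_0$, forcing equality), so adding that single common top to $F$ isolates $x_0$ in $A$. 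Without this observation the recursion can stall: finitely many tops cannot separate $x_0$ from infinitely many extensions of it passing through \emph{distinct} tops, so the mere failure of isolation does not by itself produce divergence. Once this is inserted, the remaining points you flag (trimming tops to make the two child basic sets disjoint, and the fact that incomparability propagates upward so that $w\mapsto t_w$ is an order embedding) do go through.
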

\begin{proof}
    The proof that (a) implies (b) is straightforward.
    
    (b) implies (c): assume that $T$ has a subset $P$ order-isomorphic to the binary tree $2^{<\omega}$ and let $\phi: 2^{<\omega}\rightarrow T$ be an isomorphism.
    Given $f \in 2^{\omega}$, let $\psi(f)=\{r \in R: \exists n \in \omega\, r\leq \phi(f\restrict n)\}$.
    As $(\phi|n: n\in\omega)$ is a strictly increasing sequence in $T$, $\psi(f)$ is a ray in $T$, thus $\psi$ is well-defined.

    $\psi$ is injective: if $f\neq g$, let $n$ be the least such that $f(n)\neq g(n)$.
    Then $t_f=f|(n+1)$ and $t_g=g|(n+1)$ are incomparable in $2^{\omega}$, thus, $\phi(t_f)$ and $\phi(t_g)$ are incomparable in $T$.
    Notice that $\phi(t_f)\in \psi(f)$.
    However, $\phi(t_f)\notin \psi(g)$ as we would have $\phi(t_f)\leq \phi(g|m)$ for some $m\in\omega$, which entails that $t_f\subseteq g|m$, thus $n+1\leq m$ and $f(n)=t_f(n)=g(n)$, a contradiction.
    Notice that we have proved, in fact, that if $f\neq g$, then $\psi(f)\not \subseteq \psi(g)$.
    By a symmetrical argument, $\psi(g)\not \subseteq \psi(f)$.

    $\psi$ is continuous: given $f\in 2^{\omega}$, let $t \in \psi(f)$ and $F$ be a set of tops of $\psi(f)$.
    We must verify that there exists an open set $V\subseteq 2^{\omega}$ such that $f\in V$ and $\psi[V]\subseteq [t,F]$.
    As $t \in \psi(f)$, there exists $n \in \omega$ such that $t \leq \phi(f|n)$.
    Let $V=\{g \in 2^{\omega}: g|n=f|n\}$. Then $f \in V$ and $\psi[V]\subseteq [t,F]$, since if $g \in V$ and $g\neq f$, then $\psi(f)\not\subseteq \phi(g)$, so $\phi(g)\cap F=\emptyset$.

    As $\psi$ is continuous, injective and its domain is compact, it is a homeomorphism onto its range, completing the proof that (c) implies (b).
    \vspace{.3cm}
    
    Now we prove that (c) implies (a).
    For $S\subseteq T$, we will say that $s \in T$ is \emph{$S$-trivial} if $\{r \in S: r\geq s\}$ is a chain.
    
    As $T$ contains no subset order-isomorphic to $2^{<\omega}$, for every nonempty subtree $S$ of $T$ and $t \in S$ there exists an $S$-trivial $s\geq t$ (otherwise we may recursively construct a copy of $2^{<\omega}$ in $S\subseteq T$).

    Given a subtree $S$ of $T$, let $P(S)=\{s \in S: s \text{ is not $S$-trivial}\}$.

    Recursively define, for $\alpha<|T|^+$, $T_0=T$, $T_\alpha=P(T_\beta)$ if $\alpha=\beta+1$ and $T_\alpha=\bigcap_{\beta<\alpha} T_\beta$ if $\alpha$ is a limit ordinal.

    There must exist (the smallest) $\delta<|T|^+$ such that $T_\delta=T_{\delta+1}$.
    $T_\delta$ has no $T_\delta$-trivial element. Thus, $T_\delta=\emptyset$.

    For each $\beta\leq \delta$, let $V_\beta=\bigcup\{[s]: s \in T\setminus T_\beta\}$.
    $(V_\beta: \beta\leq \delta)$ is clearly an increasing family of open subsets of $\mathcal R(T)$ such that $V_0=\emptyset$, $V_\delta=\mathcal R(T)$ and $V_\xi=\bigcup_{\beta<\xi} V_\beta$ if $\xi$ is a limit ordinal.

    Now, given a non-empty $X\subseteq \mathcal R(T)$, let $\beta$ be the smallest ordinal such that $X\cap V_\beta\neq \emptyset$ and fix a minimal ray $x \in X\cap V_\beta$.
    Notice that $\beta$ must be a successor ordinal, say $\beta=\alpha+1$.
    We claim that $x$ is isolated in $X$.
    Indeed, let $s \in T_\alpha\setminus T_{\alpha+1}$ be such that $x \in [s]$.
    Let $F$ be the set of all tops of $x$ in $T_\alpha$.
    Then $|F|\leq 1$.

    We claim that $[s, F]\cap X=\{x\}$.
    Indeed, if $y \in [s, F]\cap X$, then $s \in y$.

    If $x, y$ are not contained in one another, then they give us two incompatible points $t_x \in x$, $t_y \in y$.
    As $s$ is $T_\beta$-trivial, one of $t_x, t_y$ is in $V_\xi$ for some $\xi<\beta$, which shows that $V_\xi\cap X\neq \emptyset$, contradicting the minimality of $\beta$.

    Thus, $x, y$ are contained in one another.
    By the minimality of $x$, $x\subseteq y$.
    If $y$ contains a top $t_y$ of $x$ which is not in $F$, then, again, $y$ violates the minimality of $\beta$.
    Thus, as $y \in [s, F]$, $y=x$.
\end{proof}

Now, let us consider two technical lemmas:

\begin{lemma}[Particular case of Corollary 5.5 in \cite{koloschin2023end}]\label{lemma:compactComponents}
    Let $G$ be a graph and $F$ be a closed subspace of $\Omega(G)$.
    Then there exists an induced subgraph $H$ of $G$ such that the natural inclusion mapping $i:\Omega(H)\to \Omega(G)$ is a topological embedding with image $F$.
\end{lemma}

\begin{lemma}[Particular case of Theorem 4.2 in \cite{kurkofka2022countablydeterminedends}]\label{lemma:SecCountableNormalTree}
    For a graph $G$, $\Omega(G)$ is second countable if, and only if, there exists a countable normal tree $T\subseteq G$ such that the natural inclusion mapping $i:\Omega(T)\to \Omega(G)$ is a topological homeomorphism.
\end{lemma}

Recall that a subgraph $H$ of a graph $G$ is \emph{end-faithful} if two $H$-rays are equivalent in $H$ if, and only if, they are equivalent in $G$.
Furthermore, recall that a graph $H$ is a \emph{subdivision} of a graph $G$ if $H$ can be obtained from $G$ by replacing edges of $G$ by internally vertex-disjoint paths.

With that, we thus obtain:

\begin{corollary}\label{corollary:scatteredends}
    Let $G$ be a graph. Then the following are equivalent:
    
    \begin{enumerate}[label=(\alph*)]
        \item\label{item_ScatteredEnds} $\Omega(G)$ is scattered.
        \item\label{item_NoCantorEnds} $\Omega(G)$ contains no topological copy of the Cantor space $2^\omega$.
        \item\label{item_NoEndFaithBinaryTree} $G$ contains no end-faithful subgraph which is a subdivision of the binary tree.
    \end{enumerate}
\end{corollary}
\begin{proof}
    Equivalence between \ref{item_ScatteredEnds} and \ref{item_NoCantorEnds} follows directly from Theorem~\ref{theorem:ScatteredRays} and Theorem~\ref{theorem:representation}. 

    It is clear that \ref{item_NoCantorEnds} implies \ref{item_NoEndFaithBinaryTree}, since the set of ends of an end-faithful subdivision of the binary tree contained in $G$ must necessarily be, as a subspace of $\Omega(G)$, homeomorphic to the Cantor space.
    
    So, striving to prove that not \ref{item_NoCantorEnds} implies not \ref{item_NoEndFaithBinaryTree},  suppose that $\Omega(G)$ contains a topological copy of the Cantor space, say $K\subseteq \Omega(G)$.

    By Lemma~\ref{lemma:compactComponents}, there exists an induced subgraph $H$ of $G$ such that the natural inclusion mapping $i:\Omega(H)\to \Omega(G)$ is a topological embedding with image $K$.
    Since $\Omega(H)$ is homeomorphic to the Cantor space, which is second countable, it follows from \myref{lemma:SecCountableNormalTree} that there exists a countable normal tree $T\subseteq H$ such that the natural inclusion mapping $i:\Omega(T)\to \Omega(H)$ is a topological homeomorphism.

    Thus, since $\Omega(T)$ is homeomorphic to the Cantor space, it follows that $T$ contains a subdivision of the binary tree, which concludes the proof.
\end{proof}

We can prove the following technical lemmas, which will be useful for our main result:

\begin{lemma}\label{LEMMA_SplitingPartition}
Let $P=([t_i, F_i]: i\in I)$ be a partition of $\mathcal R(T)$ into standard basic sets and $\mathcal U$ be an open cover of $\mathcal R(T)$.

Then there exists a partition $P' = ([s_j, G_j]: j\in J)$ of $\mathcal R(T)$ into standard basic sets refining $P$ and $\mathcal U$ such that for every $j\in J$, the unique $i \in I$ with $[s_j, G_j]\subseteq [t_i, F_i]$ satisfies that $t_i<s_j$.
\end{lemma}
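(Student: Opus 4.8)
The plan is to combine Pitz's partition theorem (\Cref{LEMMA_Cover-Partition}) with a splitting by immediate successors, carried out in that order. The point is that \Cref{LEMMA_Cover-Partition} produces a partition refining any given open cover, but when we refine $P$ this way a refining cell $[a,H]\subseteq[t_i,F_i]$ may have its root $a$ lying at or below $t_i$, so the requirement $t_i<s_j$ must be enforced by a second step. That second step only shrinks the cells, so it preserves any refinement already achieved; doing the two steps in the opposite order would fail, since a subsequent application of \Cref{LEMMA_Cover-Partition} could again lower the roots.

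First I record that every standard basic set $[t,F]=[t]\setminus\bigcup_{s\in F}[s]$ is clopen (being a finite Boolean combination of the clopen sets $[s]$), so $P$ is a partition of $\mathcal R(T)$ into clopen sets and each cell of any refining partition lies in a \emph{unique} $[t_i,F_i]$. Next I form the open cover $\mathcal W=\{[t_i,F_i]\cap U:\ i\in I,\ U\in\mathcal U\}$ (discarding empty intersections) and apply \Cref{LEMMA_Cover-Partition} to obtain a partition $\mathcal Q=([a_k,H_k]:k\in K)$ of $\mathcal R(T)$ into standard basic sets refining $\mathcal W$. By construction each $[a_k,H_k]$ is contained in some $U\in\mathcal U$ and in a unique cell $[t_{i(k)},F_{i(k)}]\in P$; thus $\mathcal Q$ refines both $\mathcal U$ and $P$, and it remains only to correct the roots.

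The combinatorial heart of the second step is the decomposition $[t]=\bigsqcup_{u}[u]$, the disjoint union over the immediate successors $u$ of $t$: a ray is a downward-closed chain with no maximum, hence (each $\odownward{r}$ being well-ordered) is itself well-ordered, so every $R\in[t]$ has a least element above $t$, which is an immediate successor of $t$ and is the unique one in $R$. Applying this with $t=t_{i(k)}$ and intersecting with $[a_k,H_k]\subseteq[t_{i(k)}]$, I replace each cell of $\mathcal Q$ by the nonempty sets $[a_k,H_k]\cap[u]$. A short computation — which uses that no $h\in H_k$ satisfies $h\le t_{i(k)}$ (else $[a_k,H_k]\subseteq[t_{i(k)}]\subseteq[h]$ would be empty) and that any top $h>u$ automatically has $u\in\odownward{h}$ — shows each such intersection is again a standard basic set: if $a_k>t_{i(k)}$ only the successor $u_0\le a_k$ contributes and the cell equals $[a_k,H_k]$, whereas if $a_k\le t_{i(k)}$ each contributing $u$ gives $[u,\{h\in H_k:\ h>u\}]$.

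Collecting these cells over all $k$ yields a partition $P'$ of $\mathcal R(T)$ into standard basic sets that refines $\mathcal Q$, and hence $P$ and $\mathcal U$. Every cell of $P'$ has root either $a_k$ (when $a_k>t_{i(k)}$) or an immediate successor $u$ of $t_{i(k)}$ (when $a_k\le t_{i(k)}$), and in both cases this root is strictly greater than $t_{i(k)}$, which is exactly the required condition. I expect the main obstacle to be precisely the verification in the third paragraph: checking that intersecting a standard basic set with an immediate-successor cell $[u]$ yields a standard basic set, which forces one to track how the finitely many tops of $H_k$ sit relative to $u$ and to use nonemptiness of the cells to rule out tops below $t_{i(k)}$.
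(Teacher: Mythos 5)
Your proof is correct and follows essentially the same route as the paper's: apply \Cref{LEMMA_Cover-Partition} to the common refinement $\{[t_i,F_i]\cap U\}$ and then push the roots strictly above $t_i$ by splitting along immediate successors, using that $[t]$ is the disjoint union of the $[u]$ over immediate successors $u$ of $t$. The only cosmetic difference is that the paper always splits each cell $[r_k,H_k]$ at the immediate successors of $\max(r_k,t_{i_k})$, whereas you leave the cell untouched when its root already lies strictly above $t_{i(k)}$; both variants yield the required partition.
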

\begin{proof}

    First, let 
    $$\mathcal U' = \set{U\cap V: U\in \mathcal U \text{ and } V\in P}$$ 
    and note that $\mU'$ is an open cover of $\mathcal R(T)$ refining both $P$ and $\mathcal U$.

    By Proposition~\ref{LEMMA_Cover-Partition}, there exists a partition $Q=([r_k, H_k]: k\in K)$ of $\mathcal R(T)$ into standard basic sets refining $\mathcal U'$.
    Now fix $k \in K$.
    Let $R_k \in \mathcal R(T)$ be a ray containing $r_k$ such that every member of $H_k$ is a top of $R_k$ (such ray is uniquely determined if $H_k\neq \emptyset$, otherwise we may pick any ray containing $r_k$).
    Also, let $i_k\in I$ be such that $[r_k, H_k]\subseteq [t_{i_k}, F_{i_k}]$.

    For our fixed $k$, we now define $r_k^*$ as follows.
    If $r_k\leq t_{i_k}$, notice that $[r_k, H_k]=[t_{i_k}, H_k]$ as $[r_k, H_k]\subseteq [t_{i_k}, F_{i_k}]$.
    In this case, let $r_k^*=t_{i_k}$.
    Otherwise, let $r_k^*=r_k$.
    In any case, let $(s_j: j \in J_k)$ be an enumeration of all direct successors of $r_k^*$ in $T$.
    Here, we assume that $(J_k: k\in K)$ is a family of pairwise disjoint sets of indices.

    Given $j \in J_k$, if $s_j \in R$, let $G_j=H_k$.
    Otherwise, let $G_j=\emptyset$.
    Thus, $P'_k=([s_j, G_j]: j\in J_k)$ is a partition of $[r_k, H_k]$ into standard basic sets refining $[r_k, H_k]$.

    Thus, $P'=\bigcup_{k \in K} P'_k$ is a partition of $\mathcal R(T)$ into standard basic sets refining $Q$, and therefore $P$ and $\mathcal U$.

    Moreover, given $j \in J_k$, $r_j>r_k^*\geq t_{i_k}$, so $t_{i_k}<s_j$, and $[s_j, G_j]\subseteq [r_k^*, H_k]\subseteq [r_k, H_k]\subseteq [t_{i_k}, F_{i_k}]$.

    Then, the family $P'$ is as required.
\end{proof}

\begin{lemma}\label{lemma:split}
    Let $X$ be a topological space and $(\mathcal{U}_n: n\in A)$, where $A$ is countable, be a sequence of countable open covers of $X$ which shows that $X$ is not Rothberger -- that is, for every sequence $(U_n: n\in A)$ such that $U_n\in \mathcal{U}_n$ for all $n\in A$, there exists $x\in X$ such that $x\notin \bigcup_{n\in A} U_n$.

    Then for every $N \in A$ there exists an infinite $B\subseteq A$ and distinct $U, V \in \mathcal{U}_N$ such that ($\mathcal{U}_n: n\in B$) shows that both $U$ and $V$ are not Rothberger.
\end{lemma}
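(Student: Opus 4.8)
The plan is to argue by contradiction: fix $N$, assume the conclusion fails, and manufacture from this a selection $(U_n\in\mathcal{U}_n:n\in A)$ whose union is all of $X$, directly contradicting the hypothesis that $(\mathcal{U}_n:n\in A)$ witnesses non-Rothbergerness. For an open set $W\subseteq X$ and $B\subseteq A$, I would call $W$ \emph{$B$-coverable} if some selection $(U_n\in\mathcal{U}_n:n\in B)$ satisfies $W\subseteq\bigcup_{n\in B}U_n$; thus ``$(\mathcal{U}_n:n\in B)$ shows $W$ is not Rothberger'' is exactly the negation of $B$-coverability. Negating the conclusion says: for \emph{every} infinite $B\subseteq A$, at most one member of $\mathcal{U}_N$ fails to be $B$-coverable. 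I would first record that $|\mathcal{U}_N|\geq 2$, since otherwise $\mathcal{U}_N=\{X\}$ and selecting $X$ at coordinate $N$ already covers $X$, contradicting the hypothesis. I would also use that $A$ is countably infinite to split $A\setminus\{N\}$ into pairwise disjoint infinite pieces $(B_k:k\in\omega)$.

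Next, for each $k$ let $E_k$ be the \emph{unique} member of $\mathcal{U}_N$ that is not $B_k$-coverable, if one exists (by the negated conclusion applied to the infinite set $B_k$ there is at most one), and leave $E_k$ undefined otherwise. The key consequence is that every member $W\neq E_k$ of $\mathcal{U}_N$ is $B_k$-coverable. The main obstacle is that a single ``stubborn'' member could be non-coverable over \emph{all} of the pieces, so that it can never be covered using the leftover coordinates and must instead be the member selected at coordinate $N$. I would resolve this with a pigeonhole observation: at most one member $V^{\ast}\in\mathcal{U}_N$ can have $\{k:E_k=V^{\ast}\}$ cofinite in $\omega$, since two distinct such members would force some $k$ with $E_k$ equal to two different members. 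I then designate $U_N:=V^{\ast}$ when such a member exists, and $U_N$ arbitrary otherwise; this is the delicate choice on which the whole argument turns.

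Finally, I assemble the covering. Enumerate $\mathcal{U}_N\setminus\{U_N\}=\{W_m:m<\mu\}$ with $\mu\leq\omega$. Each $W_m\neq V^{\ast}$, so $\{k:E_k=W_m\}$ is not cofinite and hence $W_m$ is $B_k$-coverable for infinitely many $k$. A greedy construction then yields an injection $\sigma$ assigning to each $m$ a distinct piece $B_{\sigma(m)}$ over which $W_m$ is coverable, since at each step only finitely many pieces have been used while infinitely many admissible ones remain. Fixing for each $m$ a selection over $B_{\sigma(m)}$ that covers $W_m$, filling in the (disjoint, unused) remaining coordinates arbitrarily, and putting $U_N$ at coordinate $N$, I obtain a global selection whose union contains $U_N\cup\bigcup_{m<\mu}W_m=\bigcup\mathcal{U}_N=X$, the desired contradiction. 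Beyond the cofinite pigeonhole of the previous step, the only remaining care is to keep the pieces disjoint so that the per-piece selections combine into a single well-defined selection over $A$.
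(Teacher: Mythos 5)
Your proof is correct. It follows the same overall contradiction strategy as the paper --- partition the index set into infinitely many disjoint infinite pieces, cover all but one member of $\mathcal{U}_N$ piece-by-piece, and select the one exceptional member at coordinate $N$ --- but the mechanism for isolating that exceptional member is genuinely different. The paper proceeds in two stages: it first shows (by the same partition trick) that \emph{some} $U\in\mathcal{U}_N$ admits an infinite $B$ witnessing its non-Rothbergerness, and then, after fixing that $U$ and $B$, it splits $B$ itself into infinite pieces $B_k$ and uses the heredity of the witnessing property under passing to infinite subsets (if $(\mathcal{U}_n:n\in B)$ witnesses that $U$ is not Rothberger, so does $(\mathcal{U}_n:n\in B_k)$) to conclude that every other member of $\mathcal{U}_N$ is $B_k$-coverable. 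You instead negate the full conclusion at once, define the partial map $k\mapsto E_k$ picking out the unique non-coverable member over each piece, and use the observation that the fibers of this map are pairwise disjoint, so at most one can be cofinite; the cofinite-fiber pigeonhole plus the greedy injection then replace the paper's heredity argument. Your route avoids the two-stage structure and the monotonicity lemma at the cost of the slightly more delicate bookkeeping (pigeonhole plus injectivity of $\sigma$); both are complete and of comparable length. One cosmetic remark: your separate treatment of $|\mathcal{U}_N|=1$ is not actually needed, since in that case your enumeration of $\mathcal{U}_N\setminus\{U_N\}$ is empty and the final union is already $U_N=X$.
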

\begin{proof}
    First, assume that for no $U \in \mathcal U_N$ there is an infinite $B\subseteq A$ such that $(\mathcal U_n: n\in B)$ shows that $U$ is not Rothberger.
    
    Let $(B_k: k \in \omega)$ be a partition of $A$ and write $\mathcal U_N=\{U_k: k \in \omega\}$.
    By assumption, for each $k \in \omega$ there exists a sequence $(V_n^k: n \in B_k)$ such that $V_n^k\in \mathcal U_n$ for all $n\in B_k$ and $U_k\subseteq \bigcup_{n\in B_k} V_n^k$.
    Then $\bigcup_{k\in\omega} \bigcup_{n\in B_k} V_n^k = \bigcup_{k \in \omega} U_k$, a contradiction.

    Now assume that for every $B\subseteq A$ and for every $U, V \in \mathcal U_N$, $(\mathcal U_n: n\in B)$ does not show that both $U, V$ are not Rothberger.
    By the previous argument, there exists an infinite $B\subseteq A$ such that $(\mathcal U_n: n\in B)$ shows that some $U \in \mathcal U_N$ is not Rothberger.
    Fix $U$ and $B$.
    Of course, $N\notin B$.

    Let $(B_k: k \in \omega)$ be a partition of $B$ into infinitely many infinite sets and enumerate $\mathcal U_N=\setminus\{U\}=\{U_k: k \in \omega\}$.
    By assumption, for each $k \in \omega$ there exists a sequence $(V_n^k: n \in B_k)$ such that $V_n^k\in \mathcal U_n$ for all $n\in B_k$ and $U_k\subseteq \bigcup_{n\in B_k} V_n^k$. But then $U \cup \bigcup_{k \in \omega} \bigcup_{n\in B_k} V_n^k \supseteq U \cup \bigcup_{k \in \omega} U_k =X$, a contradiction as now $(\mathcal U_n: n\in A)$ does not show that $X$ is not Rothberger.
\end{proof}

The following theorem encapsulates these relationships and highlights the interplay between Lindelöfness, scatteredness, and the absence of Cantor subspaces.

\begin{theorem}\label{THM_RaySpaceRothberger}
Let \( T \) be a rooted tree. The following are equivalent:
\begin{itemize}
    \item[(a)] \(\mathcal{R}(T)\) is Rothberger.
    \item[(b)] $\mathcal{R}(T)$ is Lindelöf and contains no topological copy of the Cantor space.
    \item[(c)] $\mathcal{R}(T)$ is Lindelöf and $T$ contains no subset which is order-isomorphic to the binary tree $2^{<\omega}$.
    \item[(d)] $\mathcal R(T)$ is Lindelöf and scattered.
\end{itemize}

Moreover, if $T$ is a pruned tree, then the ``Lindel\"of'' condition in all items may be swapped by ``every node has at most countably many successors''.
\end{theorem}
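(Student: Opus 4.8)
The plan is to reduce everything to Theorem~\ref{theorem:ScatteredRays} together with the two splitting lemmas. First, the equivalences (b)$\Leftrightarrow$(c)$\Leftrightarrow$(d) are immediate: each is the conjunction of ``$\mathcal R(T)$ is Lindelöf'' with one of the three properties (scattered, no Cantor copy, no order-copy of $2^{<\omega}$) that Theorem~\ref{theorem:ScatteredRays} already shows to be equivalent. Thus the theorem reduces to proving (a)$\Leftrightarrow$(b). For (a)$\Rightarrow$(b): a Rothberger space is Menger and hence Lindelöf, so the Lindelöf clause is automatic; moreover, any topological copy $K\subseteq \mathcal R(T)$ of the Cantor space is compact, hence closed in the Hausdorff space $\mathcal R(T)$, and since the Rothberger property is inherited by closed subspaces, $K$ would be Rothberger, contradicting that $2^{\omega}$ is not Rothberger. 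So no such $K$ exists.

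The substance is (c)$\Rightarrow$(a), which I would prove by contraposition: assuming $\mathcal R(T)$ is Lindelöf but not Rothberger, I would construct an order-embedding of $2^{<\omega}$ into $T$. By Lindelöfness I may fix a witnessing sequence $(\mathcal U_n:n\in\omega)$ of countable open covers, and---using Proposition~\ref{LEMMA_Cover-Partition} together with the fact that refining each cover preserves the failure of Rothbergerness---I may assume each $\mathcal U_n$ is a partition of $\mathcal R(T)$ into standard basic sets, so that distinct members of a single $\mathcal U_n$ are disjoint. I would then recurse on $s\in 2^{<\omega}$, producing standard basic sets $B_s=[t_s,F_s]$ and infinite index sets $A_s\subseteq\omega$ with $A_{s^\frown i}\subseteq A_s$, such that $(\mathcal U_n:n\in A_s)$ witnesses that $B_s$ is not Rothberger, with $B_{s^\frown 0},B_{s^\frown 1}\subseteq B_s$ disjoint and $t_s<t_{s^\frown i}$.

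The splitting step is where the two lemmas enter (applied inside $B_s$, which is itself homeomorphic to the ray space of a subtree of $T$). Lemma~\ref{lemma:split} bisects the failure of Rothbergerness, handing back an infinite $B\subseteq A_s$ and two distinct---hence, for a partition, disjoint---pieces each still non-Rothberger along $B$, which drives the index-set bookkeeping; Lemma~\ref{LEMMA_SplitingPartition} is then used to push the roots of the pieces strictly above $t_s$ to immediate successors of a common node, so that the two children can be taken to sit above \emph{incompatible} nodes. Granting strictly increasing roots along branches and incompatible children at each split, a short argument shows that $s\mapsto t_s$ is an order-isomorphism of $2^{<\omega}$ onto a subset of $T$: if $s\perp s'$ with meet $u$, then $t_s\ge t_{u^\frown 0}\perp t_{u^\frown 1}\le t_{s'}$ forces $t_s\perp t_{s'}$, while $t_s<t_{s'}$ whenever $s\subsetneq s'$. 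This contradicts (c).

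The hard part will be this splitting step, and specifically the simultaneous control of three things: keeping the invariant ``$B_s$ is not Rothberger as witnessed by the infinite index set $A_s$'' as one passes to children (the content of Lemma~\ref{lemma:split}); forcing the two children to be rooted at genuinely incompatible nodes rather than merely disjoint pieces---note that disjointness of standard basic sets can be caused entirely by their finite top-sets, so incompatibility does not come for free, and supplying it is exactly the role of Lemma~\ref{LEMMA_SplitingPartition}; and choosing the finite top-sets $F_s$ so the pieces nest correctly. Finally, for the ``moreover'' in the pruned case no new construction is needed: Proposition~\ref{proposition:extentTrees} gives $L(\mathcal R(T))$ equal to the least infinite cardinal bounding the number of immediate successors of a node, so for pruned $T$ the clause ``$\mathcal R(T)$ is Lindelöf'' is literally equivalent to ``every node has at most countably many immediate successors,'' and the substitution is immediate.
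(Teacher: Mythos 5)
Your overall architecture matches the paper's: (b)$\Leftrightarrow$(c)$\Leftrightarrow$(d) from Theorem~\ref{theorem:ScatteredRays}, the easy (a)$\Rightarrow$(b) via closed-heredity of the Rothberger property, and the contrapositive of (c)$\Rightarrow$(a) by a binary splitting driven by Lemma~\ref{lemma:split}, with Lemma~\ref{LEMMA_SplitingPartition} turning the covers into nested partitions of standard basic sets with strictly increasing roots; the pruned case is handled identically. However, there is a genuine gap at exactly the point you flag as the hard part: you assert that Lemma~\ref{LEMMA_SplitingPartition} lets you take the two children $U_{s^\frown 0},U_{s^\frown 1}$ to be rooted at incompatible nodes. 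It does not, and no refinement lemma can. That lemma only guarantees $t_s<t_{s^\frown i}$; the two non-Rothberger pieces returned by Lemma~\ref{lemma:split} are merely distinct members of a partition, and if they happen to contain rays $x\subsetneq y$ respectively, their roots are \emph{necessarily} comparable: any standard basic neighbourhood of $x$ has root in $x$, while any standard basic neighbourhood of $y$ has root in $y$, which either lies in $x$ or lies above a top of $x$ and hence above all of $x$. So the disjointness of $[t_{s^\frown 0},F_{s^\frown 0}]$ and $[t_{s^\frown 1},F_{s^\frown 1}]$ can be carried entirely by the top-sets, with $t_{s^\frown 0}<t_{s^\frown 1}$, and you have no control over which two pieces the splitting lemma hands you.

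The paper closes this gap with a second, nontrivial extraction that your proposal is missing: after building the system $(U_s)_{s\in 2^{<\omega}}$ with only the increasing-roots property, it constructs an order embedding $\theta\colon 2^{<\omega}\to 2^{<\omega}$ so that $t_{\theta(s)}$ and $t_{\theta(s')}$ are comparable exactly when $s\subseteq s'$ or $s'\subseteq s$. The existence of the required incomparable pair above each $\theta(s)$ is proved by contradiction: if $\{t_{s'}:s'\supseteq\theta(s)\}$ were a chain, its downward closure would be a single ray $R$, and the continuum many pairwise distinct rays $R_f\in\bigcap_n U_{f\restrict n}$ (for $f\supseteq \theta(s)$, distinct because the $U_{f\restrict n}$ eventually separate distinct branches) would form a $\subsetneq$-well-ordered family of initial segments of $R$, each separated from the next by some $t_u$ with $u\in 2^{<\omega}$; this injects an ordinal of size at least $\mathfrak c$ into the countable set $2^{<\omega}$. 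Without this step (or an equivalent one), the map $s\mapsto t_s$ you produce need not be an order embedding of $2^{<\omega}$ into $T$, and the contradiction with (c) is not reached.
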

\begin{proof}

    (a) implies (b):
    If $\mathcal R(T)$ contains a subspace $\mathcal C$ homeomorphic to the Cantor space, then $\mathcal C$ is closed, as the Cantor space is compact and $\mathcal R(T)$ is Hausdorff.
    Thus, as the Rothberger property is hereditary for closed subspaces, $\mathcal R(T)$ is not Rothberger for the Cantor space is not Rothberger.
    \vspace{.3cm}

    (c) implies (a).
    We proceed by the contrapositive.

    Suppose that \(\mathcal{R}(T)\) is Lindel\"of, but not Rothberger, so that we may fix a sequence $S = \seq{\mathcal{U}_n:n\in\omega}$ of open covers attesting this fact. Since $\mathcal{R}(T)$ is Lindelöf, we can assume, by recursively applying \myref{LEMMA_SplitingPartition}, that:
    
    \begin{enumerate}[label=(\alph*)]
        \item Each $\mathcal{U}_n$ is a countable partition comprised of standard basic sets indexed as $([t_i, F_i]: i \in I_n)$, where the $I_n$'s are pairwise disjoint sets of indices.
        \item Each $\mathcal{U}_{n+1}$ is a refinement of $\mathcal{U}_n$ and for all $i \in I_n$, $J \in I_{n+1}$, if $[t_j, F_j]\subseteq [t_i, F_i]$, then $t_i < t_j$.
    \end{enumerate}
We will recursively define, for each $s\in 2^{<\omega}$, a natural number $N_s$, an infinite $A_s\subseteq \omega$ and $U_s\in \mathcal{U}_{\min A_s}$ such that:
    \begin{enumerate}[label=(\roman*)]
        \item $N_s=0$,
        \item $N_s<N_t$ if $s\subsetneq t$,
        \item $A_t\subseteq A_s$ if $s\subseteq t$,
        \item $U_{s}\in \mathcal{U}_{N_s}$,
        \item $U_{s^\frown (0)}$ and $U_{s^\frown (1)}$ are distinct and disjoint,
        \item $(\mathcal U_n: n\in A_s)$ attests that $U_{s}$ and $U_{s^\frown(1)}$ are not Rothberger.
    \end{enumerate}
    
    To do that, we proceed as follows.
    Let $N_s=0$. By Lemma \ref{lemma:split}, there exists $U_\emptyset \in \mathcal U_0$ and an infinite $A_\emptyset \subseteq \omega$ such that $(\mathcal U_n: n\in A_\emptyset)$ attests that $U_\emptyset$ is not Rothberger.

    Suppose we have defined $U_s$, $A_s$ for some $s\in 2^{<\omega}$.
    By Lemma \ref{lemma:split} applied to any $M \in A_s$ with $M>N_s$, there exist distinct $U_{s^\frown(0)}, U_{s^\frown(1)} \in \mathcal U_M$ and  infinite $A_{s^\frown(0)}=A_{s^\frown(1)}\subseteq A_s$ such that $(\mathcal U_n: n\in A_{s^\frown(i)})$ attests that both $U_{s^\frown(0)}, U_{s^\frown(1)}$ are not Rothberger.
    Let $N_{s^\frown(i)}=M$ for $i\in\{0,1\}$.
    \vspace{.2cm}

    For each $s\in 2^{<\omega}$, let $i_s$ be such that $U_s=[t_{i_s}, F_{i_s}]$ and write $t_s=t_{i_s}$.
    Thus, by (b) above, $t_s < t_{s^\frown(i)}$ for $i\in\{0,1\}$, and as in the proof of (c) implies (b), $t_{s^\frown(0)}$ and $t_{s^\frown(1)}$ are incomparable and for every $f \in 2^\omega$, the set $R_f=\{s\in T: \exists n \in \omega\, s\leq t_{f|n}\}$ is a ray in $T$ and $R_f\in \bigcap_{n\in\omega} U_{f|n}$.
    Thus, if $f\neq g$, then $R_f\neq R_g$.

    We now recursively construct an order embedding $\theta:2^{<\omega}\rightarrow 2^{<\omega}$ so that:
    \begin{enumerate}[label=(\roman*)]
        \item $\theta(\emptyset)=\emptyset$,
        \item $\theta(s)\subsetneq \theta(t)$ if, and only if $s\subsetneq t$,
        \item $t_{\theta(s)}< t_{\theta(t)}$ if, and only if $s\subsetneq t$.
    \end{enumerate}
    
    Assume we have defined $\theta(s)$ for some $s\in 2^{<\omega}$.
    We need to find two incomparable $s_0, s_1\in 2^{<\omega}$ such that $\theta(s)< s_i$, $t_{\theta(s)}< t_{s_i}$ for $i\in\{0,1\}$, and such that $t_{\theta(s_0)}$ and $t_{\theta(s_1)}$ are incomparable.
    With that done, we may let $\theta(s^\frown(i))=s_i$ for $i\in\{0,1\}$.
    Thus, the recursion will be complete, and we may finish the proof by defining $\phi(s)=t_{\theta(s)}$ for all $s\in 2^{<\omega}$.

    Assume that there are no such $s_0, s_1$.
    Then $C=\{t_{s'}: s'\in 2^{<\omega}, s'\geq \theta(s)\}$ is a chain in $T$.
    Let $R=\{t \in T: \exists s'\in 2^{<\omega}, s'\geq \theta(s), t\leq t_{s'}\}$, that is, let $R$ be the smallest ray containing $C$.
    Notice that $R=\bigcup\{ R_f: \theta(s)\subseteq f\in 2^\omega\}$.

    As $R$ is well-ordered and each $R_f$ is a different initial segment (possibly equal to $R$), write $(f_\alpha: \alpha<\delta)$ an enumeration of $\{f\in 2^\omega: \theta(s)\subseteq f\}$ such that $R_{f_\alpha}\subsetneq R_{f_\beta}$ if $\alpha<\beta<\delta$.
    Thus, $\delta$ is some ordinal greater or equal to $\mathfrak c$.

    For each ordinal $\alpha<\delta$, $\{t_{u}: u\in 2^{<\omega}, \subseteq u\subseteq f_\alpha\}$ is unbounded in $R_{f_\alpha}$, so we may fix $u_\alpha \in 2^{<\omega}$ such that $t_{u_\alpha}\in R_{f_{\alpha+1}}\setminus R_{f_\alpha}$.
    Thus, if $\alpha<\beta<\delta$, then $t_{u_\alpha}$ and $t_{u_\beta}$ are distinct, and therefore $u_\alpha\neq u_\beta$.
    Thus, the mapping $\alpha\mapsto u_\alpha$ is injective, so $\delta\leq \omega$, a contradiction.

    Finally, (c) is equivalent to (d) by \myref{theorem:ScatteredRays}.
\end{proof}

We should emphasize that there are scattered end spaces which are not Rothberger, for instance, consider an uncountable discrete space.
Such space is not Rothberger as it is not Lindelöf, and it can be attained as the end space of a star with uncountably many rays.

Now, turning our attention back to end spaces, we have the following combinatorial and topological characterizations.
\begin{corollary}\label{COR_RothbergerEnds}
Let $G$ be a graph. The following are equivalent:
\begin{enumerate}[label=(\alph*)]
    \item $\Omega(G)$ is Rothberger.

    \item $\Omega(G)$ is Lindel\"of and does not contain a copy of the Cantor space.
    
    \item $\Omega(G)$ is Lindel\"of and scattered.
    
    \item $\Omega(G)$ is Lindel\"of and $G$ contains no end-faithful subgraph which is a subdivision of the binary tree.

    \item For every $\subseteq$-increasing sequence $(F_n:n\in\omega)$ of finite subsets of $\mathrm{V}(G)$ there is a sequence $(C_n:n\in\omega)$, where each $C_n$ is a connected component of $G\setminus F_n$, so that every ray of $G$ has a tail in $C_n$ for some $n\in\omega$.

\end{enumerate}
\end{corollary}
\begin{proof}
    It is clear that (a) implies (e), while equivalence between (b), (c) and (d) follows directly from Corollary~\ref{corollary:scatteredends}.
    Furthermore, since every end space is homeomorphic to the ray space of a special rooted tree (Theorem~\ref{theorem:representation} in \cite{pitz2023characterisingpathraybranch}), equivalence between (a), (b) follows directly from Theorem~\ref{THM_RaySpaceRothberger}.

    Thus, we only need to show that (e) implies (b) (we do so by its contrapositive).

    In this proof, we denote the constant sequence $(0, 0, \dots)$ by $\mathbf 0$ and the constant sequence $(1, 1, \dots)$ by $\mathbf 1$.
    Moreover, if $n \in \omega$, let $[0]_n=(0, \ldots, 0)\in 2^n$ and $[1]_n=(1, \ldots, 1)\in 2^n$.
    Finally, if $s_0, \dots, s_k \in 2^{<\omega}$, we write $\bigsqcup_{i\leq k} s_i$ for the concatenation of the sequences $s_0^\frown \dots ^\frown s_k$, and $\bigsqcup_{i<0} s_i$ is the empty sequence.
    \vspace{.2cm}

    It is clear, in view of \myref{theorem:lindelof}, that (e) fails if $\Omega(G)$ is not Lindelöf.
    On the other hand, suppose $\psi\colon 2^\omega\to \Omega(G)$ is a topological embedding.
    We will show that (e) fails.

    We recursively construct, for $k \in \omega$, $F_k$, $n_k$, for all $k \in \omega$:
    \begin{enumerate}[label=(\roman*)]
        \item $F_k\subsetneq F_{k+1}$.
        \item $n_k<n_{k+1}$.
        \item For $s \in 2^{k}$, $\Omega_G\left(\psi\left(\bigsqcup_{i<k} [s(i)]_{n_k}^\frown \mathbf 0\right), F_k\right)\cap \Omega_G\left(\psi\left(\bigsqcup_{i<k} [s(i)]_{n_k}^\frown \mathbf 1\right), F_k\right)=\emptyset$.
        \item For $m<2$, $\left\{f \in 2^\omega: \bigsqcup_{i<k+1} [f(i)]_{n_k}\subseteq f\right\}\subseteq \psi^{-1}\left[\Omega_G\left(\psi\left(\bigsqcup_{i<k+1} [f(i)]_{n_k}^\frown \mathbf m\right), F_k\right)\right]$.
    \end{enumerate}

    This is easily achieved by the injectivity and continuity of $\psi$.
    Then $(F_k : k\in\omega)$ is an increasing sequence of finite subsets of $\mathrm{V}(G)$ as desired.

    Indeed, let $(C_n:n\in\omega)$ be such that each $C_n$ is a connected component of $G\setminus F_n$.
    Recursively, define $f|(n_0+\dots+n_k)$ so that the connected component of $G\setminus F_k$ containing rays of $\psi\left[\left\{f \in 2^\omega: \bigsqcup_{i<k+1} [f(i)]_{n_k}\subseteq f\right\}\right]$ is not $C_k$.
    Then $\psi(f)$ has no tail in $C_k$ for all $k\in\omega$.
\end{proof}

\section{Countable ray and end spaces}
In this section, we characterize the countability of ray and end spaces in terms of combinatorial properties of the underlying graph or tree, relating it to the Rothberger property and its combinatorial characterization developed in the previous section.

Obviously, for countable ray spaces all points are $G_\delta$ -- something that happens for every $T_1$ countable topological space -- and every point has a countable local neighborhood -- which fails in general.
We take the opportunity to show that, in such spaces, the \emph{character} and the \emph{pseudocharacter} of each point coincide.

Recall that, given a $T_1$ topological space $X$ and a point $x\in X$, the \emph{character} of $x$ in $X$, denoted by $\chi(x,X)$, is the minimum cardinality of a local base for $x$ in $X$, and the pseudocharacter of $x$ in $X$, denoted by $\psi(x,X)$, is the minimum cardinality of a local \emph{pseudobase} for $x$ in $X$, i.e., a family $\mU$ of open sets in $X$ such that $\bigcap\mU=\set{x}$.
Then the \emph{character} of $X$ is defined as $\chi(X)=\sup\set{\chi(x,X):x\in X}$, and the \emph{pseudocharacter} of $X$ is defined as $\psi(X)=\sup\set{\psi(x,X):x\in X}$.
Of course, for every $x\in X$, we have that $\psi(x,X)\leq \chi(x,X)$, and thus $\psi(X)\leq \chi(X)$.

\begin{theorem}\label{THM_CharPseudoCharRays}
    For every rooted tree $T$ and $x \in \mR(T)$, we have that $\chi(x,\mR(T))=\psi(x,\mR(T))$.
    Thus, $\chi(\mR(T))=\psi(\mR(T))$.
\end{theorem}
\begin{proof}
    Let $\kappa=\psi(x,\mR(T))$ and let $(U_\alpha:\alpha<\kappa)$ be a collection of open sets in $\mR(T)$ such that $\bigcap_{\alpha<\kappa} U_\alpha=\set{x}$.

    For each $\alpha<\kappa$, let $[t_\alpha, F_\alpha]$ be a basic open set such that $x\in [t_\alpha, F_\alpha]\subseteq U_\alpha$.

    Note that if $u$ is a top of $x$ which is not in $\bigcup_{\alpha<\kappa} F_\alpha$, then no ray of $T$ can contain $u$, for if $y$ is such ray, then $y \in \bigcap_{\alpha<\kappa} [t_\alpha, F_\alpha] =\{x\}$, but $y\neq x$ as $u\in y$ and $u\notin x$.

    \textbf{Case 1}: the set $\{t_\alpha:\alpha<\kappa\}$ is unbounded in $x$.

    In this case, $\mathcal B=\{[t_\alpha, F]: \alpha<\kappa, F=\bigcup_{\beta \in I} F_\beta \text{ for some finite } I\subseteq \kappa\}$ has cardinality $\leq \kappa$ and is a local base for $x$ in $\mR(T)$.
    To see that, let $[s, K]$ be a basic open set such that $x\in [s, K]$.

    Let $K'=K\cap \bigcup_{\alpha<\kappa} F_\alpha$, which is finite, so fix $I\subseteq \kappa$ finite such that $K' \subseteq \bigcup_{\beta \in I} F_\beta$.
    Since $\{t_\alpha:\alpha<\kappa\}$ is unbounded in $x$, there exists some $\alpha<\kappa$ such that $s\leq t_\alpha$.

    Then $[t_\alpha, \bigcup_{\beta \in I} F_\beta] \in \mathcal B$.
    We show that $[t_\alpha, \bigcup_{\beta \in I} F_\beta] \subseteq [s,K]$.
    Indeed, given $y\in [t_\alpha, \bigcup_{\beta \in I} F_\beta]$, as $s\leq t_\alpha\in y$, we have $s\in y$.
    Moreover, if $y\cap K\neq \emptyset$, then either $y\cap K'\neq \emptyset$, so $y\cap \bigcup_{\beta \in I} F_\beta \neq \emptyset$, which gives us a contradiction, or $y\cap (K\setminus K')\neq \emptyset$, so $y\cap \bigcup_{\alpha<\kappa} F_\alpha \neq \emptyset$, which is also a contradiction by our previous observation.

    \textbf{Case 2}: the set $\{t_\alpha:\alpha<\kappa\}$ is bounded in $x$.

    In this case, let $\bar t \in x$ be such that $t_\alpha\leq \bar t$ for every $\alpha<\kappa$.
    Then the family $\mathcal B=\{[\bar t, F]: F=\bigcup_{\beta \in I} F_\beta \text{ for some finite } I\subseteq \kappa\}$ has cardinality $\leq \kappa$ and is a local base for $x$ in $\mR(T)$.

    To see that, let $[s, K]$ be a basic open set such that $x\in [s, K]$ and define $F=\bigcup_{\beta \in I} F_\beta$ exactly as in Case 1.

    Let $y \in [\bar t, F]$.
    We show that $y\in [s,K]$.
    As in Case 1, $y\cap K\neq \emptyset$ would lead to $y \cap F\neq \emptyset$, a contradiction.

    If $s\leq \bar t$, then $s \in y$ and we are done.
    Otherwise, $\bar t<s$.
    If $s \notin y$, then $y\cap \bigcup_{\alpha<\kappa} F_\alpha=\emptyset$ (or we would have $x\subseteq y$ and therefore $s \in y$).
    Thus, $y\in \bigcap_{\alpha<\kappa} [t_\alpha, F_\alpha]=\{x\}$, again, a contradiction.
\end{proof}

One of the equivalences shown in Theorem 3.2 of \cite{kurkofka2022countablydeterminedends} is that, for all $\varepsilon \in \Omega(G)$, $\chi(\varepsilon, \Omega(G))$ is countable if, and only if, $\psi(\varepsilon, \Omega(G))$ is countable.
We thus highlight that the following corollary of \myref{THM_CharPseudoCharRays} generalizes that equivalence to arbitrary cardinals:

\begin{corollary}\label{CharPseudoCharEnds}
    For every graph $G$ and $\varepsilon\in \Omega(G)$, we have that $\chi(\varepsilon,\Omega(G))=\psi(\varepsilon,\Omega(G))$.
    Thus, $\chi(\Omega(G))=\psi(\Omega(G))$.
\end{corollary}

Now let us turn our attention to countable ray spaces:

\begin{lemma}\label{lemma:countablerays}
    A pruned rooted tree $T$ has countably many rays if, and only if, all the following conditions hold:
    \begin{enumerate}[label=(\roman*)]
        \item\label{item_counttops}  every ray of $T$ has at most countably many tops,
        \item\label{item_countbranch}  every branch of $T$ is countable,
        \item\label{item_countsucc} every node of $T$ has at most countably many successors,
        \item\label{item_no binary} $T$ contains no subset which is order-isomorphic to the binary tree $2^{<\omega}$.    
    \end{enumerate}
\end{lemma}
\begin{proof}
    It is clear that if $T$ has countably many rays, then \ref{item_counttops}, \ref{item_countbranch}, \ref{item_countsucc} and \ref{item_no binary} hold.

    On the other hand, suppose that $T$ has uncountably many rays and that \ref{item_counttops}, \ref{item_countbranch} and \ref{item_countsucc} hold. 
    
    We will show that \ref{item_no binary} fails.
    To start off, we claim that there exist two incomparable nodes $s_0, s_1 \in T$ such that each of the rooted trees $\lfloor s_0\rfloor$ and $\lfloor s_1\rfloor$ has uncountably many rays.
    Indeed, if this were not the case, then the set 
    \begin{equation*}
        \set{s\in T:  \mR(\lfloor s\rfloor) \text{ is uncountable}}= R
    \end{equation*}
    would be a ray of $T$.
    Furthermore, since every ray in $T$ has countably many tops, it follows that $R$ would actually have to be a branch of $T$.
    However, by \ref{item_countbranch}, $R$ is countable, so it follows from \ref{item_countsucc} that there must be some $t\in T\setminus R$ such that $\mR(\lfloor t\rfloor)$ is uncountable, a contradiction.

    It is clear that we may recursively proceed this way to construct an order embedding $\phi:2^{<\omega}\rightarrow T$, as desired.
\end{proof}

We note that the hypothesis that $T$ is pruned can be removed, as long as in (i) and (iii), we consider only the tops and successors which lie in some ray of $T$.
With that in mind, we have the following topological characterization of countable ray spaces:

\begin{corollary}\label{corollary:CountableRayRothberger}
    Let $T$ be a rooted tree.
    The following are equivalent:
    \begin{enumerate}[label=(\alph*)]
        \item\label{item_countableRays} $T$ has countably many rays.
        \item\label{item_RaysGdelta} $\mR(T)$ is Rothberger and $\psi(\mR(T))=\aleph_0$ (that is, every ray in $T$ is the intersection of countably many open sets).
        \item \label{item_RaysRoth} $\mR(T)$ is Rothberger and $\chi(\mR(T))= \aleph_0$ (that is, every ray in $T$ has a countable local base).
    \end{enumerate}
\end{corollary}
\begin{proof}
    The equivalence between \ref{item_RaysGdelta} and \ref{item_RaysRoth} follows directly from \myref{THM_CharPseudoCharRays}.

    The equivalence between \ref{item_countableRays} and \ref{item_RaysGdelta} follows from \myref{lemma:countablerays} and the trivial observation that conditions \ref{item_counttops} and \ref{item_countbranch} in \myref{lemma:countablerays} are, together, equivalent to every point in $\mR(T)$ being the intersection of countably many open sets, while conditions \ref{item_countsucc} and \ref{item_no binary} in \myref{lemma:countablerays} are, in view of \myref{THM_RaySpaceRothberger}, equivalent to  $\mR(T)$ being Rothberger.
\end{proof}

At last:

\begin{corollary}\label{COR_CountableEnds_SeqFin}
    A graph $G$ has countably many ends if, and only if, all the following conditions hold:
    \begin{enumerate}[label=(\roman*)]
        \item\label{item_EndRoth} For every $\subseteq$-increasing sequence $(F_n:n\in\omega)$ of finite subsets of $\mathrm{V}(G)$ there is a sequence $(C_n:n\in\omega)$, where each $C_n$ is a connected component of $G\setminus F_n$, so that every ray of $G$ has a tail in $C_n$ for some $n\in\omega$.
        \item\label{item_EndGdelta} For every ray $R$ in $G$ there exists a $\subseteq$-increasing sequence $(F_n:n\in\omega)$ of finite subsets of $\mathrm{V}(G)$ such that, for every ray $R'$ in $G$ which is not equivalent to $R$, there exists an $n\in\omega$ such that $R$ and $R'$ lie in different connected components of $G\setminus F_n$.
    \end{enumerate}
\end{corollary}
\begin{proof}
    It is clear that \ref{item_EndGdelta} is equivalent to every end of $G$ being the intersection of countably many basic open sets in $\Omega(G)$.
    Thus, the proof follows directly from \myref{COR_RothbergerEnds} and \myref{corollary:CountableRayRothberger}, in view of \myref{theorem:representation}.
\end{proof}

\begin{corollary}\label{COR_CountableEnds_BinTree}
    A graph $G$ has countably many ends if, and only if, all the following conditions hold:
    \begin{enumerate}[label=(\roman*)]
        \item\label{item_EndSubDiv} $G$ contains no end-faithful subgraph which is a subdivision of the binary tree.
        \item\label{item_EndGdelta1} For every ray $R$ in $G$ there exists a $\subseteq$-increasing sequence $(F_n:n\in\omega)$ of finite subsets of $\mathrm{V}(G)$ such that, for every ray $R'$ in $G$ which is not equivalent to $R$, there exists an $n\in\omega$ such that $R$ and $R'$ lie in different connected components of $G\setminus F_n$.
    \end{enumerate}
\end{corollary}

As a footnote of this section, we note that condition \ref{item_EndGdelta} above could, by a diagonalization argument, be changed by the apparently strongly condition:

\begin{enumerate}[label=(\roman*')]
    \setcounter{enumi}{1}
    \item\label{item_EndCountBase} There exists a $\subseteq$-increasing sequence $(F_n:n\in\omega)$ of finite subsets of $\mathrm{V}(G)$ such that for every two distinct rays $R$ and $R'$ in $G$, there exists an $n\in\omega$ such that $R$ and $R'$ lie in different connected components of $G\setminus F_n$.
\end{enumerate}
\section{On the Menger property}
In this section, we adapt the techniques used in the previous section to study the Menger property for end spaces and ray spaces.
In particular, we show that for these spaces, the Menger property is equivalent to $\sigma$-compactness --  showing that Menger's conjecture holds for these classes of spaces -- and that these properties are equivalent to the spaces not having a closed copy of the irrationals.

To prove the results in this section we will need the following auxiliary definitions and results.

\begin{definition}\label{DEF_K-BaireRank}
    Suppose $T$ is a rooted tree and $S\subseteq T$ is a subtree. We will say that $t\in T$ is \emph{$S$-compactly trivial} if every $s\in S$ above $t$ has finitely many successors in $S$.

    Moreover, we recursively define, for each ordinal $\alpha$, a subtree $\partial_K^\alpha(T)$ of $T$ as follows:
    \begin{itemize}
        \item $\partial_K^0(T)=T$;
        \item If $\partial_K^\alpha(T)$ has been defined, let $\partial_K^{\alpha+1}(T)$ be the subtree of $\partial_K^\alpha(T)$ obtained by removing all $\partial_K^\alpha(T)$-compactly trivial nodes from $\partial_K^\alpha(T)$;
        \item If $\beta$ is a limit ordinal and $\partial_K^\alpha(T)$ has been defined for all $\alpha<\beta$, let $\partial_K^\beta(T) = \bigcap_{\alpha<\beta} \partial_K^\alpha(T)$.
    \end{itemize}
    If there exists $\gamma$ such that $\partial_K^\gamma(T)=\emptyset$, then we call the least such $\gamma$ the \emph{$K$-Baire rank} of $T$ and denote the smallest such $\gamma$ by $\rk_K(T)$.
    In this case, for every $t \in T$, the supremum of all $\alpha$'s for which $t \in \partial_K^\alpha(T)$ is called the \emph{$K$-Baire rank} of $t$ and denoted by $\rk_K(T, t)$.
    Notice that the supremum is attained, so it is, in fact, a maximum.
\end{definition}

For the next result, notice that if $S\subseteq T$ is a subtree of $T$, then $\mathcal R(S)\subseteq \mathcal R(T)$ and the ray space topology of $\mathcal R(S)$ agrees with the subspace topology inherited from $\mathcal R(T)$.
\begin{theorem}\label{THM_RayClosedBaire}
    For a rooted tree $T$, the following are equivalent:
    \begin{enumerate}[label=(\alph*)]
        \item $\mR(T)$ does not contain a closed topological copy of the Baire space $\omega^\omega$;
        \item There is no order embedding $\varphi\colon \omega^{<\omega}\to T$ such that for every $s\in \omega^{<\omega}$ there is a $t_s\in T$ with $t_s\geq\varphi(s)$ for which, for every $n\in\omega$, $\varphi(s^\smallfrown n)$ is a successor of $t_s$.
        \item There exists an ordinal $\gamma$ such that $\partial_K^\gamma(T)= \emptyset$.
        \item For every nonempty closed $X\subseteq \mR(T)$ there exists some point $x\in X$ and a set $V\subseteq X$ with $x\in V$ which is open and compact in $X$.
    \end{enumerate}
\end{theorem}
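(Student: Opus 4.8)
The plan is to establish the equivalences by proving a cycle, roughly (b)$\Leftrightarrow$(c), then (a)$\Leftrightarrow$(b), and finally (c)$\Leftrightarrow$(d), since each pairing controls a different aspect: (b) and (c) are the combinatorial/ordinal-rank translations of one another, (a) is the topological meaning of the absence of the embedding, and (d) is the ``local compactness everywhere'' reformulation that will be the useful tool later for the Menger/$\sigma$-compactness results. I expect (c)$\Rightarrow$(d) to be the technical heart, and the construction of a closed copy of $\omega^\omega$ in (b)$\Rightarrow$(a) (contrapositive) to be the second main obstacle.

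\textbf{The combinatorial equivalence (b)$\Leftrightarrow$(c).} For (b)$\Rightarrow$(c) I would argue the contrapositive: if no $\gamma$ has $\partial_K^\gamma(T)=\emptyset$, then since the $\partial_K^\alpha(T)$ form a decreasing chain of subtrees, they must stabilize at some nonempty $S=\partial_K^\delta(T)=\partial_K^{\delta+1}(T)$. Stabilization means $S$ has no $S$-compactly trivial node, so \emph{every} $t\in S$ has some $s\geq t$ in $S$ with infinitely many successors in $S$. This is exactly the branching condition needed to recursively build the order embedding $\varphi\colon\omega^{<\omega}\to T$ demanded in (b): starting from any node, find a node above it with infinitely many $S$-successors, pick $\omega$-many of them for the immediate children, and iterate. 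For (c)$\Rightarrow$(b) I would again use the contrapositive: given such a $\varphi$, its image realizes the pattern that survives every stage of the $\partial_K$ derivation, so by transfinite induction one checks $\operatorname{im}(\varphi)\subseteq\partial_K^\alpha(T)$ for all $\alpha$, forcing every $\partial_K^\alpha(T)$ to be nonempty.

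\textbf{The topological equivalence (a)$\Leftrightarrow$(b).} For (b)$\Rightarrow$(a), contrapositively, suppose $\mathcal R(T)$ contains a closed copy of $\omega^\omega$; I would seek to extract the forbidden embedding $\varphi$ by examining the tree $T_C=\bigcup_{x\in C} x$ generated by a closed copy $C\cong\omega^\omega$, mimicking the argument in \myref{theorem:ScatteredRays} that turned a copy of the Cantor space into a copy of $2^{<\omega}$, but now tracking the infinite-branching points that a copy of $\omega^\omega$ forces. For (b)$\Rightarrow$(a) in the forward direction — i.e. $\varphi$ exists $\Rightarrow$ $\mathcal R(T)$ has a closed copy of $\omega^\omega$ — I would send $f\in\omega^\omega$ to the ray $\psi(f)=\{r\in T: \exists n\; r\leq \varphi(f{\restrict} n)\}$, exactly as $\psi$ was built in \myref{theorem:ScatteredRays}; injectivity and continuity follow by the same branching/incomparability bookkeeping, and the key extra point is that the range is \emph{closed}, which should follow from the fact that the $t_s$ witnesses give infinitely many pairwise-incomparable successor-directions at each level, so any ray outside the range eventually leaves the image in a way detectable by a basic open set.

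\textbf{The equivalence (c)$\Leftrightarrow$(d), the crux.} For (c)$\Rightarrow$(d) I would, given nonempty closed $X\subseteq\mathcal R(T)$, look at $T_X=\bigcup X$ and use the $K$-Baire rank of its nodes. Choosing a ray $x\in X$ whose top node (or a suitable node along it) has \emph{minimal} $K$-Baire rank, the minimality should force that node $t$ to be $S$-compactly trivial at the relevant stage, meaning every node of $T_X$ above $t$ has finitely many successors in $T_X$; then \myref{compactFiniteSuccessors} (applied to the subtree of $T_X$ above $t$) yields that the basic neighborhood $[t]\cap X$ is compact, and by shaving off finitely many tops it is also open in $X$ — giving the desired open-compact $V\ni x$. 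The delicate part is handling tops and ensuring the chosen node really sits at a finitely-branching stratum, which is where the maximum-is-attained clause in the definition of $\operatorname{rk}_K$ is essential. For (d)$\Rightarrow$(c) I would argue the contrapositive once more: if (c) fails then by (c)$\Leftrightarrow$(b)$\Leftrightarrow$(a) the space $\mathcal R(T)$ contains a \emph{closed} copy $X$ of $\omega^\omega$, and since $\omega^\omega$ has no nonempty open compact subset (it is nowhere locally compact), neither does any closed copy, so (d) fails for this $X$.
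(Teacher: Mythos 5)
Your proposal follows the paper's proof in all the load-bearing steps, just packaged as pairwise equivalences instead of the paper's single cycle (a)$\Rightarrow$(b)$\Rightarrow$(c)$\Rightarrow$(d)$\Rightarrow$(a). Your contrapositive of (a)$\Rightarrow$(b) (the map $f\mapsto\psi(f)$ with closedness of the image checked level by level), your contrapositive of (b)$\Rightarrow$(c) (build $\varphi$ inside a stage of the derivation that never empties), your (c)$\Rightarrow$(d) (take the least $\alpha$ at which $\bigcup X$ leaves $\partial_K^\alpha(T)$, note the witnessing node is compactly trivial one level down, and invoke Lemma~\ref{compactFiniteSuccessors}), and your use of the nowhere local compactness of $\omega^\omega$ for the (d)-direction are exactly the paper's arguments, and your reorganization is logically sound because your (d)$\Rightarrow$(c) only consumes implications you actually establish.

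The one genuine departure is your plan to prove ``closed copy of $\omega^\omega$ implies $\varphi$ exists'' directly, by extracting the embedding from $T_C$. Two cautions here. First, the step is redundant: the implication $\neg$(a)$\Rightarrow\neg$(b) already follows from (b)$\Rightarrow$(c)$\Rightarrow$(d) together with the observation that a closed copy of $\omega^\omega$ violates (d), all of which you prove; the paper deliberately routes around the direct extraction for this reason. Second, the argument you propose to mimic does not exist: Theorem~\ref{theorem:ScatteredRays} never extracts a copy of $2^{<\omega}$ from a Cantor set sitting in $\mR(T)$ --- it produces the tree-copy from the \emph{failure of scatteredness} via the derivative $P(S)$, and builds the Cantor set only in the opposite direction. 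A direct extraction from an arbitrary closed copy of $\omega^\omega$ is delicate, since the copy need not sit as a ``spine'' of $T$ and the infinite branching of $\omega^\omega$ as a topological space does not obviously localize at single nodes $t_s$ carrying infinitely many successors; making this precise would essentially force you to redo the derivative argument anyway. Drop that step and let the chain of implications do the work, and your proof coincides with the paper's.
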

\begin{proof}
    The fact that (d) implies (a) holds as a closed copy of $\omega^\omega$ would contradict (a), and the fact that (b) implies (c) holds as the range of $\varphi$ would be contained in $T_\gamma$ for every ordinal $\gamma$.

    Now we prove that (a) implies (b) by its contrapositive.
    Suppose that $T$ contains an order-isomorphic copy $S$ of $\omega^{<\omega}$ as in (b), with order isomorphism $s\in \omega^{<\omega} \mapsto t_s\in S$.
    Then it is clear that the map $f\in \omega^\omega \mapsto R_f\in \mR(T)$, where $R_f$ is the (unique) ray in which $\set{t_{f\restrict n}:n\in\omega}$ is cofinal, is a topological embedding.
    Furthermore, we claim that the union 
    \begin{equation*}
    \bigcup_{s\in \omega^{n}}[t_s]
    \end{equation*}
    is closed in $\mR(T)$ for every $n\in\omega$.
    Indeed, this is clear for $n=0$, as $[t_{\emptyset}]$ is clopen in $\mR(T)$.
    Now suppose that the claim holds for $n\ge 1$ and let $R\in \overline{\bigcup_{s\in \omega^{n+1}}[t_s]}$.
    Since $$\bigcup_{s\in \omega^{n+1}}[t_s] \subseteq \bigcup_{r\in \omega^{n}}[t_r],$$
    it follows from our induction hypothesis that there is an $r\in \omega^n$ such that $R\in [t_r]$.
    On the other hand, by our assumption over $S$, it is clear that $\bigcup_{k\in\omega}[t_{r^\smallfrown k}]$ is closed in $\mR(T)$, so it follows that $R\in [t_{r^\smallfrown m}]$ for some $m\in\omega$, concluding the proof of our claim.

    At last, in order to show that the image of the defined map is closed, suppose that $R\in \overline{\set{R_f:f\in \omega^\omega}}$.
    We recursively define $g\in \omega^\omega$ such that $R\in[t_{g\restrict n}]$ for every $n\in\omega$ as follows.
    Firstly, it is clear that $R\in [t_{\seq{\,}}]$.
    Now suppose that we have found an $s=g|n\in \omega^{n}$ for which $R\in [t_s]$.
    Since 
    $$\bigcup_{r\in \omega^{n+1}}[t_r]$$ 
    is a closed set in $\mR(T)$ containing $\set{R_f:f\in \omega^\omega}$, there must be a (unique) $m\in \omega$ such that $R\in [t_{s^\smallfrown m}]$.
    Thus, letting $g(n)=m$ concludes the recursive construction of $g$.

    Clearly, $R_g\subseteq R$. If $R_g\subseteq R$, let $x \in R$ be a top of $R_g$.
    Then $[x]$ is a standard basic open neighborhood of $R$ which is disjoint from $\set{R_f:f\in \omega^\omega}$, contradicting our assumption on $R$.
    Thus, $R=R_g\in \{R_f: f \in \omega^\omega\}$.
    This finishes the proof that (a) implies (b).

    Now we prove that (b) implies (c) by its contrapositive.
    Assume $t \in \partial_K^\gamma(T)$.
    Then, by the definition of $\partial_K^{\gamma+1}(T)$, it follows that $t$ is not $\partial_K^\gamma(T)$-compactly trivial, so there exists $s \in \partial_K^\gamma(T)$ above $t$ with infinitely many successors in $\partial_K^\gamma(T)$.
    Let $\phi(\emptyset)=t$ and $t_\emptyset=s$.

    Continuing in this manner, we thus recursively construct the desired $\varphi\colon \omega^{<\omega}\to T$.

    Finally, we prove that (c) implies (d).
    Suppose that $X\subseteq \mR(T)$ is nonempty and closed. 
    Let $\alpha\leq\gamma$ be the least ordinal such that $\bigcup X\not\subseteq \partial_K^\alpha(T)$.
    Notice that $\alpha\neq 0$ and $\alpha$ is not a limit, so there exists $\beta<\gamma$ such that $\alpha=\beta+1$.
    Let $t \in \bigcup X\setminus \mR(\partial_K^\alpha(T))$ and fix $x \in X$ such that $t\in x$.
    We claim that the clopen neighborhood $[t]\cap X$ of $x \in X$ is compact.
    
    Let $S=\{s \in \partial_K^\alpha(T): s\leq t\, \vee\, t\leq s\}$. Then $S$ is a subtree of $T$, therefore $\mathcal R(S)\subseteq\mathcal R(T)$ and the ray space topology of $\mathcal R(S)$ agrees with the subspace topology inherited from $\mathcal R(T)$.
    Since $t$ is $\partial_K^\alpha(T)$-compactly trivial in $\partial_K^\alpha(T)$, it follows from Lemma~\ref{compactFiniteSuccessors} that $\mathcal R(S)$ is a compact subspace of $\mathcal R(T)$.
    As $\bigcup X\subseteq \partial_K^\alpha(T)$, we have $X\cap [t]\subseteq \mathcal R(S)$.
    Moreover, both $X$ and $[t]$ are closed in $\mathcal R(T)$, so $X\cap [t]$ is closed in $\mathcal R(S)$, which is compact.
    Thus, $X\cap [t]$ is compact.
\end{proof}

\begin{lemma}\label{lemma:RaylessPartition}
    Let $T$ be a rooted tree and suppose $([t_i,F_i]:i\in I)$ is a partition of $\mathcal{R}(T)$ comprising standard basic open sets of $\mathcal{R}(T)$.
    Then the tree $\{t_i:i\in I\}$ with the induced suborder is rayless.
\end{lemma}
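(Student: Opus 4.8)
The plan is to argue by contradiction: suppose the tree $\{t_i : i \in I\}$ (equipped with the suborder inherited from $T$) contains a ray $\rho$, and derive a contradiction with the fact that $([t_i, F_i] : i \in I)$ is a \emph{partition} of $\mathcal{R}(T)$. The key observation is that a ray $\rho$ in $\{t_i : i \in I\}$ is an infinite chain $t_{i_0} < t_{i_1} < t_{i_2} < \cdots$ of indexing nodes, which itself generates (or is cofinal in) an honest ray $R \in \mathcal{R}(T)$ of the ambient tree. I would then locate which block $[t_j, F_j]$ of the partition contains this $R$, and show that this forces two distinct blocks to overlap, violating disjointness.

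\medskip

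\textbf{Key steps, in order.} First I would fix a ray $\rho$ in $\{t_i : i \in I\}$ and let $R$ be the downward closure $\downward{\,}$ of the chain $\{t_{i_n} : n \in \omega\}$ inside $T$, which is a genuine ray of $T$ (downward closed chain with no maximal element, since the chain is strictly increasing and infinite). Since $P = ([t_i, F_i] : i \in I)$ covers $\mathcal{R}(T)$, there is a unique $j \in I$ with $R \in [t_j, F_j]$; in particular $t_j \in R$ and no element of $F_j$ lies on $R$. Because $t_j \in R$ and the $t_{i_n}$ are cofinal in $R$, there is some $n$ with $t_j \le t_{i_n}$, so $t_{i_n} \in [t_j]$. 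The second step is to show that $t_{i_n}$, as the base node of its \emph{own} block $[t_{i_n}, F_{i_n}]$, witnesses a nonempty intersection $[t_{i_n}, F_{i_n}] \cap [t_j, F_j]$: I would produce a single ray $S$ extending $\downward{t_{i_n}}$ that avoids both $F_{i_n}$ and $F_j$. Avoiding $F_{i_n}$ is possible because $F_{i_n}$ consists of tops of rays through $t_{i_n}$ and one can always choose a cofinal branch above $t_{i_n}$ missing any prescribed finite set of tops (the tree is not required to be pruned, but the existence of the ray $R$ itself guarantees cofinal branches above $t_{i_n}$ exist); avoiding $F_j$ follows since one may simply take $S = R$, which by choice of $j$ misses $F_j$ and contains $t_{i_n} \ge t_j$, hence also contains $t_{i_n}$ and misses $F_{i_n} \cap R = \emptyset$ provided $F_{i_n}$ is disjoint from $R$. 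The final step is to observe that $R$ therefore lies in both $[t_j, F_j]$ and $[t_{i_n}, F_{i_n}]$, and since $i_n \ne j$ is forced (as $t_{i_n} \ge t_j$ with $t_{i_n}$ strictly higher up the chain than $t_j$ whenever we pick $n$ large enough), this contradicts the pairwise disjointness of the partition.

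\medskip

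\textbf{The main obstacle} I anticipate is the bookkeeping around the tops: I must ensure that the ray $R$ I construct from the chain genuinely misses $F_j$, and that $t_{i_n}$ can be chosen so that $i_n \ne j$, i.e.\ so that $t_{i_n}$ is \emph{strictly} above $t_j$ rather than equal to it. The clean way to handle this is to note that $R \in [t_j, F_j]$ means $t_j \in R$ and $R \cap F_j = \emptyset$; since the chain $(t_{i_n})$ is strictly increasing and cofinal in $R$, for all sufficiently large $n$ we have $t_j < t_{i_n}$, so $i_n \ne j$. Then $R$ itself already lies in $[t_{i_n}]$, and since $R \cap F_{i_n} = \emptyset$ must hold (otherwise $R \notin [t_{i_n}, F_{i_n}]$, but then which block of the partition contains $R$ other than $[t_j,F_j]$?), the single ray $R$ belongs to two distinct blocks — contradiction. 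The subtlety to get right is simply confirming that $R$ belongs to exactly one block and tracing why membership in $[t_{i_n}, F_{i_n}]$ is compatible with membership in $[t_j, F_j]$ only if $i_n = j$; this is where the partition hypothesis does the real work, and I would present it carefully rather than gloss over it.
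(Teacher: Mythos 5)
Your overall strategy --- extract the ray $R$ generated by an infinite chain $t_{i_0}<t_{i_1}<\cdots$ in $\{t_i:i\in I\}$, locate the unique block $[t_j,F_j]$ of the partition containing $R$, pick $n$ with $t_j<t_{i_n}$, and play the two blocks $[t_j,F_j]$ and $[t_{i_n},F_{i_n}]$ against each other --- is the same as the paper's. But your final step has a genuine gap. You claim that $R\cap F_{i_n}=\emptyset$, so that $R$ itself lies in $[t_{i_n},F_{i_n}]$, and your only justification is the rhetorical question ``otherwise $R\notin[t_{i_n},F_{i_n}]$, but then which block of the partition contains $R$ other than $[t_j,F_j]$?'' That is not a contradiction: the partition requires $R$ to lie in exactly one block, namely $[t_j,F_j]$, and nothing forces $R$ into a second one --- on the contrary, disjointness guarantees $R\notin[t_{i_n},F_{i_n}]$ once $i_n\neq j$. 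Moreover $R\cap F_{i_n}\neq\emptyset$ can genuinely occur: $F_{i_n}$ is a finite set of tops of a ray through $t_{i_n}$, and such a top may well be a later element of $R$ (for instance $t_{i_{n+1}}$, if it sits at a limit level with $\odownward{t_{i_{n+1}}}\ni t_{i_n}$). Your fallback suggestion of building some other ray $S$ above $t_{i_n}$ avoiding both $F_{i_n}$ and $F_j$ is likewise unsubstantiated: avoiding $F_{i_n}$ alone is easy (the block is nonempty), but avoiding both simultaneously is exactly the point in question.

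The paper closes the argument without ever placing $R$ in the second block: it derives the contradiction from the disjointness alone. Concretely, if $t_j<t_{i_n}$ and $[t_j,F_j]\cap[t_{i_n},F_{i_n}]=\emptyset$, then $t_{i_n}$ must lie above some $s\in F_j$. (If not, take any $y\in[t_{i_n},F_{i_n}]$; then $t_j\in y$, and if $y$ meets $F_j$ at some top $s$ of the ray $x_j$ underlying $F_j$, comparability of $s$ and $t_{i_n}$ inside the chain $y$ forces $s>t_{i_n}$, whence $x_j=\odownward{s}\subseteq y$ contains $t_j$ and $t_{i_n}$ and avoids both $F_j$ and $F_{i_n}$; so either $y$ or $x_j$ lies in the intersection of the two blocks, a contradiction.) But then $s\leq t_{i_n}\in R$ and $R$ is downward closed, so $s\in R\cap F_j$, contradicting $R\in[t_j,F_j]$. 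This deduction is the step missing from your write-up.
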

\begin{proof}
    Striving for a contradiction, suppose $R\subseteq\{t_i:i\in I\}$ is a minimal ray. 
    Then $R$ is cofinal in some ray $R'\in\mathcal{R}(T)$.
    Since $\{[t_i,F_i]:i\in I\}$ is a partition of $\mathcal{R}(T)$, there must be a $\delta\in I$ such that $R'\in [t_\delta,F_\delta]$. 
    On the other hand, as $R$ is cofinal in $R'$, there must be a $\gamma\in I$ such that $t_\gamma\in R$ and $t_\gamma>t_\delta$.
    But, since $[t_\delta,F_\delta]\cap [t_\gamma,F_\gamma]=\emptyset$, this means that $t_\gamma$ is above some $s\in F_\delta$, contradicting the fact that $R'\in [t_\delta,F_\delta]$.
\end{proof}

The following lemma is similar to Lemma~\ref{lemma:split}.
\begin{lemma}\label{lemma:notMenger}
    Let $X$ be a topological space and $(\mathcal{U}_n: n\in A)$, where $A$ is countable, be a sequence of countable open covers of $X$ which shows that $X$ is not Menger -- that is, for every sequence $(\mathcal{F}_n: n\in A)$ such that $\mathcal{F}_n\subset \mathcal{U}_n$ is finite for all $n\in A$, there exists $x\in X$ such that $x\notin \bigcup_{n\in A} \bigcup\mathcal{F}_n$.

    Then for every $N\in A$ there exists an infinite $\mathcal{V}\subset \mathcal{U}_N$ such that for each $V\in \mathcal{V}$ there is an infinite  $B_V\subset A$ such that ($\mathcal{U}_n: n\in B$) attests that $V$ is not Menger.
\end{lemma}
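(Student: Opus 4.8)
The statement is the Menger analogue of \myref{lemma:split}, and the plan is to follow the same contradiction strategy; in fact the argument is cleaner here, because in the Menger game one is allowed to select \emph{finitely many} members of each cover rather than a single one. Concretely, let $\mathcal{W}$ denote the collection of all $V\in\mathcal{U}_N$ for which there is an infinite $B\subseteq A$ such that $(\mathcal{U}_n:n\in B)$ attests that $V$ is not Menger. The conclusion asserts precisely that $\mathcal{W}$ is infinite, so I would assume toward a contradiction that $\mathcal{W}$ is finite and derive that $(\mathcal{U}_n:n\in A)$ does \emph{not} attest that $X$ is not Menger.

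First I would record two preliminary remarks. Since each $\mathcal{U}_n$ is a cover of $X$, if $\mathcal{U}_N$ were finite we could take $\mathcal{F}_N=\mathcal{U}_N$ and cover $X$ with a single finite selection, contradicting the hypothesis; hence $\mathcal{U}_N$ is (countably) infinite, and as in \myref{lemma:split} we read $A$ as countably infinite, so that $A\setminus\{N\}$ may be partitioned into infinitely many pairwise disjoint infinite sets $(B_k:k\in\omega)$. Enumerating $\mathcal{U}_N\setminus\mathcal{W}=\{U_k:k\in\omega\}$, which is infinite as $\mathcal{W}$ is assumed finite, the defining property of $\mathcal{W}$ gives, for each $k$, that $(\mathcal{U}_n:n\in B_k)$ does not attest that $U_k$ is not Menger; that is, there is a finite selection $(\mathcal{F}^k_n:n\in B_k)$ with each $\mathcal{F}^k_n\subseteq\mathcal{U}_n$ finite and $U_k\subseteq\bigcup_{n\in B_k}\bigcup\mathcal{F}^k_n$.

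To close the argument I would assemble a global finite selection $(\mathcal{F}_n:n\in A)$ by setting $\mathcal{F}_N=\mathcal{W}$ and $\mathcal{F}_n=\mathcal{F}^k_n$ whenever $n\in B_k$; each $\mathcal{F}_n$ is then a finite subset of $\mathcal{U}_n$. Consequently,
\[
\bigcup_{n\in A}\bigcup\mathcal{F}_n \;\supseteq\; \bigcup\mathcal{W}\cup\bigcup_{k\in\omega}U_k \;=\; \bigcup\mathcal{U}_N \;=\; X,
\]
since $\mathcal{W}$ together with $\{U_k:k\in\omega\}$ exhausts $\mathcal{U}_N$ and $\mathcal{U}_N$ covers $X$. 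This contradicts the assumption that $(\mathcal{U}_n:n\in A)$ attests that $X$ is not Menger, so $\mathcal{W}$ must be infinite, and any infinite $\mathcal{V}\subseteq\mathcal{W}$ witnesses the conclusion.

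The only real subtlety, and what makes the Menger case strictly easier than \myref{lemma:split}, is that dumping all of the finitely many ``bad'' sets $\mathcal{W}$ into the single selection $\mathcal{F}_N$ is harmless precisely because finite selections are permitted; in the Rothberger setting one may pick only a single $U\in\mathcal{U}_N$, which is exactly why that proof required the extra two-stage bookkeeping. I therefore expect no genuine obstacle beyond ensuring that $\mathcal{U}_N$ and $A$ are infinite, both of which are forced by (or tacitly built into) the hypotheses, so that the partition into infinitely many infinite blocks is available.
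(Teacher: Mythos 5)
Your proof is correct and follows essentially the same route as the paper's: assume the set $\mathcal{W}$ of ``bad'' members of $\mathcal{U}_N$ is finite, partition $A\setminus\{N\}$ into infinitely many infinite blocks, use one block per element of $\mathcal{U}_N\setminus\mathcal{W}$ to cover it by finite selections, and absorb $\mathcal{W}$ itself into the single finite selection $\mathcal{F}_N$ to contradict the hypothesis. The only difference is that you make explicit a couple of details the paper leaves tacit (that $\mathcal{U}_N$ and $\mathcal{U}_N\setminus\mathcal{W}$ are infinite), which is harmless.
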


\begin{proof}
    Assume the contrary, that is, there is an $N\in A$ such the set of all $V \in \mathcal U_N$ for which there are infinite $B_V\subset A$ such that ($\mathcal{U}_n: n\in B_V$) attests that $V$ is not Menger is finite, say $F$
    
    Enumerate $\mathcal U_N\setminus F$ as $(V_k:k\in\omega)$.
    Consider a partition $(B_k:k\in\omega)$ of $A\setminus \{N\}$ into infinite sets.
    For each $k\in\omega$, since $V_k$ is not in the aforementioned finite set, there is a sequence $(\mathcal{F}_n:n\in B_k)$ such that each $\mathcal{F}_n\subset \mathcal{U}_n$ is finite and $V_k\subseteq \bigcup_{n\in B_k} \bigcup \mathcal{F}_n$.

    Then $X=\bigcup_{V\in F} V \cup \bigcup_{k\in\omega} \bigcup_{n\in B_k} \bigcup \mathcal{F}_n$, showing that $(\mathcal{U}_n: n\in A)$ does not witness that $X$ is not Menger, a contradiction.
\end{proof}

If $T$ is a rooted tree and $s \in T$, let $T_{(s)}=\{a \in T: s\leq a\, \vee \, a\leq s\}$ be the subtree of $T$ consisting of all nodes comparable with $s$.

\begin{lemma}\label{LEMMA_Compact-Sigma_glueing}
    Suppose $T$ is a rooted tree and that $S$ is a subtree of $R$.
    For each $s\in S$, let $F_s$ be a countable collection of direct successors of $s$ in $T\setminus S$
    For each ray $x\subseteq S$, let $F_x$ be a collection (of any cardinality) of tops of $x$ in $T\setminus S$.

    Assume that:
    \begin{enumerate}[label=(\alph*)]
        \item For every $s\in S$ and $u \in F_s$, $S_u\subseteq T_{(u)}$ is a subtree of $T$ such that $\mR(S_u)$ is $\sigma$-compact.
        \item For every $x \in \mathcal R(S)$ and $u \in F_x$, $S_u\subseteq T_{(u)}$ is a subtree of $T$ such that $\mR(S_u)$ is $\sigma$-compact.
        \item $\mathcal R(S)$ is compact.
   
    \end{enumerate}
    Then $\mathcal R(S\cup\bigcup_{s \in S}\bigcup_{u \in F_s} S_u\cup \bigcup_{x \in \mathcal R(S)}\bigcup_{u \in F_x} S_u)$ is $\sigma$-compact.
\end{lemma}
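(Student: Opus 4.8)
The plan is to exhaust $\mathcal R(T^\ast)$, where $T^\ast=S\cup\bigcup_{s\in S}\bigcup_{u\in F_s}S_u\cup\bigcup_{x\in\mathcal R(S)}\bigcup_{u\in F_x}S_u$, by countably many compact sets. The first step is a clean decomposition of the rays of $T^\ast$. I note that any two distinct attachment nodes are incomparable in $T$: if $u,u'$ are successor- or top-attachment nodes with $u<u'$, then $u$ lies in $\odownward{u'}$, which is contained in $S$ (it is either $\downward{s'}$ for the predecessor $s'\in S$ of $u'$, or the ray $x'\subseteq S$ of which $u'$ is a top), contradicting $u\notin S$. Since a ray is a chain, every $R\in\mathcal R(T^\ast)$ contains at most one attachment node. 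If it contains none then $R\subseteq S$, i.e.\ $R\in\mathcal R(S)$; if it contains one, say $u$, then $\odownward{u}\subseteq R$ and, by the same incomparability, every node of $R$ above $u$ must lie in $S_u$, whence $R\subseteq S_u$ and $R\in[u]\cap\mathcal R(S_u)$. Thus $\mathcal R(T^\ast)=\mathcal R(S)\;\sqcup\;\bigsqcup_{u}\bigl([u]\cap\mathcal R(S_u)\bigr)$, the union ranging over all attachment nodes $u$.

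Next I would fix the data for the exhaustion. For each attachment node $u$, since $\mathcal R(S_u)$ is $\sigma$-compact I write $\mathcal R(S_u)=\bigcup_{n\in\omega}K_u^n$ with each $K_u^n$ compact. For each $s\in S$ I enumerate the countable set $F_s=\{u_s^k:k\in\omega\}$. For $m,n\in\omega$ define
\[
P_{m,n}:=\mathcal R(S)\ \cup\ \bigcup_{s\in S}\bigcup_{k\le m}K_{u_s^k}^{\,n}\ \cup\ \bigcup_{x\in\mathcal R(S)}\bigcup_{u\in F_x}K_u^{\,n}\ \subseteq\ \mathcal R(T^\ast).
\]
Every ray of $T^\ast$ lies in some $P_{m,n}$ (a base ray is in $\mathcal R(S)$; a ray through a top-attachment node $u$ is in $K_u^n$ for some $n$; a ray through $u_s^k$ is in $K_{u_s^k}^n$ for some $n$ and is captured once $m\ge k$), so $\mathcal R(T^\ast)=\bigcup_{m,n\in\omega}P_{m,n}$. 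Hence it suffices to prove that each $P_{m,n}$ is compact.

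The heart of the argument — and the step I expect to be the main obstacle — is this compactness. Given an open cover, I first refine it to standard basic sets and, using the compactness of $\mathcal R(S)$, extract a finite subfamily $\mathcal U_0=\{[t_1,G_1],\dots,[t_p,G_p]\}$ covering $\mathcal R(S)$. The nodes $t_1,\dots,t_p$ form a finite barrier: since $\bigcup_j[t_j]\supseteq\mathcal R(S)$, every ray of $S$ contains some $t_j$. Using crucially that $S$ is a subtree in the sense of this paper, so that every node of $S$ lies on a ray of $S$, it follows that the set of nodes of $S$ lying strictly below the barrier is exactly $\bigcup_j\odownward{t_j}$, which is finite. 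I then check that every bush not sitting below the barrier is already swallowed by $\mathcal U_0$: if an attachment node $u$ has some base ray through it meeting the barrier at a node $\le u$, then (using that no top $g\in G_j$ can lie below $u$, again by downward-closure of $S$ and of the rays defining the basic sets together with $u\notin S$) the whole set $[u]\cap\mathcal R(S_u)$ is contained in the corresponding $[t_j,G_j]$. Consequently the part of $P_{m,n}$ missed by $\mathcal U_0$ is contained in the finite union of compact bushes $\{K_{u_s^k}^n:\ s\in\bigcup_j\odownward{t_j},\ k\le m\}\cup\{K_g^n:\ g\in\bigcup_j G_j\}$, so it is compact and can be covered by finitely many further members of the cover. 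This yields a finite subcover, proving $P_{m,n}$ compact and hence $\mathcal R(T^\ast)$ $\sigma$-compact.

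I expect the bookkeeping in this last paragraph to be the delicate part: verifying that ``above-barrier'' bushes are entirely contained in a single $[t_j,G_j]$ (the containments rest on the pairwise incomparability of attachment nodes and on downward-closure of $S$ and of the rays defining the basic sets), and isolating precisely the finitely many bushes — those blocked by the finitely many tops in $\bigcup_j G_j$, or rooted at the finitely many nodes of $\bigcup_j\odownward{t_j}$ — that remain. The role of the compactness of $\mathcal R(S)$ is exactly to force both of these collections to be finite; without the hypothesis that every node of $S$ lies on a ray, the second collection could be infinite and the conclusion would fail.
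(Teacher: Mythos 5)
Your reduction to the sets $P_{m,n}$ contains a genuine gap: these sets need not be compact, so the cover-chasing argument in your third paragraph cannot be completed. Both facts you flag as ``crucial'' are unavailable. A subtree in this paper is merely a downward-closed subset, so a node of $S$ need not lie on any ray of $S$, yet may still carry attachments from $F_s$; and in an order-theoretic tree $\odownward{t}$ is a well-ordered set that can be infinite, so $\bigcup_j\odownward{t_j}$ need not be finite. For a concrete failure, let $y=\{v_0<v_1<\cdots\}$, let $g$ be a top of $y$, let $d_0,d_1,\dots$ be pairwise incomparable successors of $g$, and set $S=y\cup\{g\}\cup\{d_i:i\in\omega\}$. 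Then $\mathcal R(S)=\{y\}$ is compact (each $\downward{d_i}$ has a maximum, hence contributes no ray), and the hypotheses permit $F_{d_i}=\{u_i\}$ with $S_{u_i}$ a single branch through $u_i$, so that each $\mathcal R(S_{u_i})=\{y,Q_i\}$ is compact. But the resulting space $\mathcal R(T^\ast)=\{y\}\cup\{Q_i:i\in\omega\}$ is an infinite discrete space: $[u_i]=\{Q_i\}$, while $[v_0,\{g\}]=\{y\}$ because every $Q_i$ contains $g$. Here $P_{m,n}$ equals the whole space for every $m,n$, so it is not compact. Your barrier argument breaks at exactly this point: the infinitely many bushes at the $d_i$ all sit above the blocking top $g\in G_1$, yet the $d_i$ are neither in $\odownward{t_1}$ nor elements of $G_1$, so these bushes land outside your finite exceptional list and are not swallowed by $[t_1,G_1]$. (The lemma itself survives in this example --- the space is countable, hence $\sigma$-compact --- but not via compactness of the $P_{m,n}$.)

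Your first two paragraphs (the incomparability of attachment nodes, the resulting decomposition of $\mathcal R(T^\ast)$, and the exhaustion $\bigcup_{m,n}P_{m,n}=\mathcal R(T^\ast)$) are correct and agree in spirit with the paper. Where the paper diverges is precisely at the compactness step: it first replaces each compact piece $K_u^n$ by a subtree $S_{u,n}$ with compact ray space via Corollary~\ref{LEMMA_Compact-DownClosure}, glues only \emph{one} successor-bush per node of $S$ into each piece (forming $T_{k,n}=S\cup\bigcup_{s}S_{u_{s,k},n}$ and, separately, $T'_n$ for the top-attachments), and then applies the purely combinatorial finite-successor criterion of Lemma~\ref{compactFiniteSuccessors} to the glued trees, avoiding any open-cover bookkeeping. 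Note that even this route implicitly uses that every node of $S$ has finitely many successors in $S$, which again requires every relevant node of $S$ to lie on a ray of $S$; so if you want to repair your argument you should both restrict to one bush per node of $S$ per piece and address (or explicitly assume away) attachments at nodes of $S$ lying on no ray of $S$, at which point you have essentially reconstructed the paper's proof.
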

\begin{proof}
    For each $x \in S \cup \mathcal R(S)$ and $u \in F_x$, as $\mathcal R(S_u)$ is $\sigma$-compact, let $(K_u^n: n\in\omega)$ be a sequence of compact subsets of $\mathcal R(S_u)$ such that $\mathcal R(S_u)=\bigcup_{n\in\omega} K_u^n$.
    By Lemma~\ref{LEMMA_Compact-DownClosure}, we may assume that each $K_u^n$ is of the form $\mathcal R(S_{u, n})$, where $S_{u,n}\subseteq S_u$ is a subtree of $S_u$.
    Thus, $\mathcal R(S_u)=\bigcup_{n\in\omega} \mathcal R(S_{u, n})$, where $S_{u,n}\subseteq S_u$ is a subtree of $S_u$ such that $\mathcal R(S_{u,n})$ is compact.

    Let $S'=\{s \in S: F_s\neq \emptyset\}$.
    For each $s \in S'$, let $F_s=\{u_{s,k}: k\in\omega\}$ be an enumeration of $F_s$ (repeating if necessary).

    For each $k, n \in \omega$, let $T_{k, n}=S\cup \bigcup_{k, n}S_{u_{s,k}, n}$.
    Each $T_{k, n}$ is compact: given $s \in S$, $s$ has finitely many successors in $S$, and at most one successor not in $S$ (that is, $u_{s, k}$).
    Moreover, given $s\in T_{k, n}\setminus s$, $s$ in exactly one of the $S_{u_{s, k}, n}$'s, so it has finitely many successors in $S_{u_{s, k}, n}$, and therefore in $T_{k, n}$.
    By Lemma~\ref{compactFiniteSuccessors}, $\mathcal R(T_{k, n})$ is compact.

    Also, for each $n \in \omega$, let  $T'_{n}=S\cup \bigcup_{x \in \mathcal R(S)}\bigcup_{u \in F_x} S_{u, n}$.
    We verify that $\mathcal R(T'_{n})$ is compact, again by Lemma~\ref{compactFiniteSuccessors}.
    Given $s \in S$, $s$ has finitely many successors in $S$ and no successor in $T'_{n}\setminus S$.
    Given $t\in T'_{n}\setminus S$, $t$ has no successor in $S$ and is in exactly one of the $S_{u, n}$'s, so it has finitely many successors in $S_{u, n}$, and therefore in $T'_{n}$.
    Thus, $\mathcal R(T'_{n})$ is compact.

    Now it suffices to see that $\mathcal R(S\cup\bigcup_{s \in S}\bigcup_{u \in F_s} S_u\cup \bigcup_{x \in \mathcal R(S)}\bigcup_{u \in F_x} S_u)=\bigcup_{k, n \in \omega} \mathcal R(T_{k, n})\cup \bigcup_{n \in \omega} \mathcal R(T'_{n})\cup \mathcal R(S)$.

    The inclusion from right to left is trivial.
    For the other inclusion, notice that if $x$ is in the left and $x\notin \mathcal R(S)$, then $x$ intersects some $F_x$ for $x\in S\cup \mathcal R(S)$ at some point $u$.
    Then $x \in \mathcal R(S_u)$, so $x\in \mathcal R(S_{u, n})$ for some $n\in\omega$.
    If $x \in S$, then $x=u_{x, k}$ for some $k\in\omega$, so $x\in \mathcal R(T_{k, n})$.
    If $x \in \mathcal R(S)$, then $x\in \mathcal R(T'_{n})$.
\end{proof}

Now we are ready to present the main result of this section.
To motivate it, we recall the classical result in descriptive set theory due to Hurewicz \cite[Theorem 7.10]{kechris2012classical} stating that a Polish space is $\sigma$-compact if, and only if it does not contain a closed topological copy of the irrationals, $\omega^\omega$.
\begin{theorem}\label{THM_RayMengerCharacterization}
    For every rooted tree \( T \), the following are equivalent:
    \begin{enumerate}[label=(\alph*)]
        \item $\mR(T)$ is $\sigma$-compact.  \label{theorem:RayMengerCharacterization_locallyPiCompact}
        \item $\mathcal{R}(T)$ is Menger.\label{theorem:RayMengerCharacterization_menger}
        \item $\mR(T)$ is Lindel\"of and there is no order embedding $\varphi\colon \omega^{<\omega}\to T$ such that for every $s\in \omega^{<\omega}$ there is a $t_s\in T$ with $t_s\geq\varphi(s)$ for which, for every $n\in\omega$, $\varphi(s^\smallfrown n)$ is a successor of $t_s$.\label{theorem:RayMengerCharacterization_combinatorial}
        \item $\mathcal{R}(T)$ is Lindel\"of and does not contain a closed topological copy of the Baire space $\omega^\omega$. \label{theorem:RayMengerCharacterization_copy}
        \item $\mathcal{R}(T)$ is Lindel\"of and, for every nonempty closed $X\subseteq \mR(T)$, there is an $x\in X$ with an open neighborhood $V_x$ which is compact in $X$.
        \item $\mathcal{R}(T)$ is Lindel\"of and there exists an ordinal $\gamma$ such that $\partial_K^\gamma(T)= \emptyset$.
    \end{enumerate}

    Moreover, if $T$ is a pruned tree, then the ``Lindel\"of'' condition in all items may be swapped by ``every node has at most countably many successors''.
\end{theorem}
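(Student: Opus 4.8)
The plan is to exploit the fact that \Cref{THM_RayClosedBaire} already establishes the mutual equivalence of the four ``structural'' conditions that appear (without the Lindel\"of hypothesis) in items (c)--(f): the non-existence of the order embedding $\varphi\colon\omega^{<\omega}\to T$, the absence of a closed copy of $\omega^\omega$, the locally-compact-core property for closed subsets, and the existence of $\gamma$ with $\partial_K^\gamma(T)=\emptyset$. Since each of (c), (d), (e) and (f) is precisely ``$\mathcal R(T)$ is Lindel\"of'' conjoined with one of these equivalent conditions, the equivalences (c) $\Leftrightarrow$ (d) $\Leftrightarrow$ (e) $\Leftrightarrow$ (f) are immediate. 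It therefore suffices to close a cycle through (a) and (b); I would prove (a) $\Rightarrow$ (b) $\Rightarrow$ (d) and then (f) $\Rightarrow$ (a).

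The two implications (a) $\Rightarrow$ (b) $\Rightarrow$ (d) are the soft ones. Every $\sigma$-compact space is Menger (a compact space is trivially Menger and a countable union of Menger spaces is Menger), giving (a) $\Rightarrow$ (b). For (b) $\Rightarrow$ (d), recall that Menger spaces are Lindel\"of and that the Menger property is inherited by closed subspaces; since the Baire space $\omega^\omega$ is not Menger, a Menger $\mathcal R(T)$ can contain no closed copy of $\omega^\omega$, which is exactly condition (d). Alternatively, one could argue (b) $\Rightarrow$ (c) directly by building the forbidden embedding from a sequence of covers witnessing the failure of Menger, using \Cref{lemma:notMenger} in place of the role played by \Cref{lemma:split} in \Cref{THM_RaySpaceRothberger}; but the closed-hereditary argument is shorter.

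The heart of the matter is (f) $\Rightarrow$ (a), for which I would run a transfinite induction on the $K$-Baire rank of the root, feeding the pieces into the gluing lemma \Cref{LEMMA_Compact-Sigma_glueing}. First I would record two structural facts. From Lindel\"ofness: for every node $s$ one has the disjoint clopen decomposition $[s]=\bigsqcup\{[u]: u \text{ an immediate successor of } s\}$, so the cover $\{[u]\}\cup\{\mathcal R(T)\setminus[s]\}$ forces at most countably many successors $u$ of $s$ to carry rays. From the derivative: since a node that is compactly trivial at some stage stays compactly trivial above it, the $K$-Baire rank is non-increasing along the tree order, whence $\partial_K^\alpha(T)=\{t:\rk_K(T,t)\geq\alpha\}$ is downward closed and $\rk_K(T)=\rk_K(T,r)+1$ is always a successor, say $\beta+1$. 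For the induction, the base case $\beta=0$ makes $T$ finitely branching above the root, so $\mathcal R(T)$ is compact by \Cref{compactFiniteSuccessors}. For the successor step I would take the spine $S=\partial_K^\beta(T)$; as $\partial_K^1(S)=\partial_K^{\beta+1}(T)=\emptyset$, the tree $S$ is finitely branching and $\mathcal R(S)$ is compact. Every node of $T$ outside $S$ lies above an immediate successor, or a top $u\notin S$, of a node (respectively, of a ray) of $S$, and above such a $u$ the comparability subtree $T_{(u)}$ has strictly smaller $K$-Baire rank (since $\rk_K(T,u)<\beta$) and a closed, hence Lindel\"of, ray space, so the induction hypothesis makes each $\mathcal R(T_{(u)})$ $\sigma$-compact. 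Collecting the (countably many per node) successor attachments as the families $F_s$ and the (arbitrarily many per ray) top attachments as $F_x$, \Cref{LEMMA_Compact-Sigma_glueing} then yields that $\mathcal R(T)=\mathcal R\big(S\cup\bigcup_u T_{(u)}\big)$ is $\sigma$-compact.

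The main obstacle---and the reason the gluing lemma is shaped the way it is---is reconciling possibly uncountable branching with $\sigma$-compactness. The resolution is the observation above that uncountable branching can occur only at the tops of the compact spine, where a single limiting ray $x$ has neighborhoods $[t]$, $t\in x$, that already swallow all attachments above it; this is precisely the regime that $F_x$ is allowed to handle with no cardinality restriction, while Lindel\"ofness caps successor-branching at $\aleph_0$, matching the countability demanded of $F_s$. Finally, for the ``moreover'' clause I would invoke \Cref{proposition:extentTrees}: for a pruned tree $L(\mathcal R(T))=\aleph_0$ exactly when every node has at most countably many immediate successors, so the Lindel\"of hypothesis may be replaced by this combinatorial condition throughout.
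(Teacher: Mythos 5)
Your proposal is correct and follows essentially the same route as the paper: the soft implications (a)$\Rightarrow$(b)$\Rightarrow$(d), the equivalences (c)$\Leftrightarrow$(d)$\Leftrightarrow$(e)$\Leftrightarrow$(f) via \Cref{THM_RayClosedBaire}, the ``moreover'' clause via \Cref{proposition:extentTrees}, and (f)$\Rightarrow$(a) by transfinite induction with spine $S$ a level of the derivative $\partial_K^\alpha$ and the pieces $\mR(T_{(u)})$ fed into \Cref{LEMMA_Compact-Sigma_glueing}, with Lindel\"ofness supplying the countability of each $F_s$. The only point to tighten is the induction variable: the quantity that provably decreases is the ambient rank $\rk_K(T,u)$ of the attachment points within the fixed tree $T$ (which is how the paper runs the recursion, over all $t\in T$ with $\rk_K(T,t)=\alpha$), not the standalone $K$-Baire rank of the subtree $T_{(u)}$, which need not drop strictly; your parenthetical ``since $\rk_K(T,u)<\beta$'' shows you are in fact tracking the right quantity.
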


Remark: Items (a) and (b) say that in the realm of ray spaces, Menger spaces are exactly the $\sigma$-compact spaces.
Item (c) may be seen as a combinatorial characterization of Menger ray spaces, while items (d), (e) and (f) are topological characterizations.
\begin{proof}
    Property (a) implies (b) in every topological space, and the fact that (b) implies (d) is clear, as $\omega^\omega$ is not Menger and closed subspaces of Menger spaces are Menger. 
    Moreover, the equivalences between (c), (d), (e) and (f) are direct corollaries of \myref{THM_RayClosedBaire}.
    The final sentence of the theorem follows from \cite[Proposition~2.16]{kurkofka2024representationtheoremendspaces} (as stated in Proposition~\ref{proposition:extentTrees}).
    Thus, it suffices to prove that (f) implies (a).

    For each $t \in T$, let $T_{(t)}=\{u \in T: u\leq t \, \vee t \leq u\}$.
    We recursively prove that for every $\alpha\leq \rk_K(T)$ and every $t \in T$ with $\rk_K(T,t)=\alpha$, the space $\mR(T_{(t)})$ is $\sigma$-compact.
    Then we get our conclusion by letting $t$ be the root of $T$.

    Assume that $\rk_K(T,t)=\alpha$ and that the claim holds for every $\beta<\alpha$.
    Let $S=T_{(t)}\cap \partial_{K}^\alpha(T)$.
    As $t$ is compactly trivial in $\partial_K^\alpha(T)$, it follows from Lemma~\ref{compactFiniteSuccessors} that $\mR(S)$ is compact.

    For each $s \in S$, let $F_s$ be the collection of all direct successors of $s$ in $T_{(t)}\setminus S$ which belong to some ray in $T_{(t)}$.
    For each $x \in \mathcal R(S)$, let $F_x$ be the collection of all tops of $x$ in $T_{(t)}\setminus S$.

    As $\mathcal R(T)$ is Lindel\"of and $\mathcal R(T_{(t)})$ is a closed subspace of $\mathcal R(T)$, the latter is also Lindelöf.
    Given $s \in S$, the set $C=\{x \in \mathcal R(T_t): s \in x\}$ is closed in $\mathcal R(t_T)$, thus, is also Lindelöf.
    It may be partitioned as $C=\bigcup_{u \in F_s} \{x \in \mathcal R(T_{(t)}): u \in x\}$, thus, by Lindelöfness, $F_s$ is countable.

    As every element $u$ of each $F_x$ and $F_s$ is not in $\partial_K^\alpha(T)$, it follows from the induction hypothesis that for each such $u$, $\mR(T_{(u)})$ is $\sigma$-compact.
    Thus, by Lemma~\ref{LEMMA_Compact-Sigma_glueing}, the following set is $\sigma$-compact:

    \begin{equation*}
    \mR\left(S\cup\bigcup_{s \in S}\bigcup_{u \in F_s} T_{(u)}\cup \bigcup_{x \in \mathcal R(S)}\bigcup_{u \in F_x} T_{(u)}\right).
    \end{equation*}

    As this ray space is a subspace of $\mR(T_{(t)})$, we only need to verify that $\mR(T_{(t)})$ is contained in it.

    Let $y \in \mR(T_{(t)})$.
    If $y\subseteq S$, then $y \in \mR(S)$ and we are done.
    If not, let $u$ be the first point of $y$ not in $S$.
    Then $u$ is either a top of some ray in $S$ or a direct successor of some $s \in S$ not in $S$ belonging to some ray (say, $y$ itself).
    In any case, it is clear that $y \in \mR(T_{(u)})$ and we are done.
\end{proof}
Thus, the representation theorem for end spaces gives us the following corollary.
\begin{corollary}\label{COR_EndMengerCharacterization}
    If \( G \) is a graph, then the following are equivalent:
    
    \begin{enumerate}[label=(\alph*)]
        \item $\Omega(G)$ is $\sigma$-compact.
        \item $\Omega(G)$ is Menger.
        \item $\Omega(G)$ is Lindelöf and does not contain a closed topological copy of the Baire space $\omega^\omega$.
    \end{enumerate}
\end{corollary}
\begin{proof}
    This follows from \myref{THM_RayMengerCharacterization} and the fact that every end space can be represented as the ray space of a rooted tree (as shown in \myref{theorem:representation}).
\end{proof}

We should highlight the importance of the ``closed'' assumption in Theorem \ref{THM_RayMengerCharacterization}(d) and Corollary~\ref{COR_EndMengerCharacterization}(c), as the following examples illustrate:

\begin{example}
    The tree $2^{<\omega}$ is atomless, countable and nonempty, therefore contains a dense subset order-isomorphic to $\omega^{<\omega}$ (whose set of branches is thus a topological copy of $\omega^\omega$).
    However, the ray space of $2^{<\omega}$ is homeomorphic to the Cantor space, which is compact (and therefore Menger). 
\end{example}

\begin{example}\label{EX_MengerWithBaireSpace}
    Consider $G$ as the graph such that $\mathrm{V}(G) = \omega^{<\omega}$ and two vertices $s,t\in \mathrm{V}(G)$ are adjacent if, and only if, one of the following conditions apply:
    \begin{itemize}
        \item $s$ is a successor of $t$ or vice versa;
        \item $s$ and $t$ are both successors of some $r\in \omega^{<\omega}$ and $s\neq t$.
    \end{itemize}
    Then it is clear that $\Omega(G)$ is compact (and therefore Menger), but it nevertheless contains an obvious topological copy of the Baire space $\omega^\omega$.
\end{example}

Furthermore, one could be tempted to conjecture that the following combinatorial statement about a graph $G$ is, analogously to the Rothberger property, equivalent to $\Omega(G)$ being Menger:
\begin{itemize}
    \item[(F)] For every sequence \(\seq{F_n:n\in\omega}\) of increasing finite subsets of $\mathrm{V}(G)$ there is a sequence \(\seq{\mathcal{F}_n:n\in\omega}\) such that each $\mathcal{F}_n$ is a finite collection of connected components of $G\setminus F_n$ and every $G$-ray has a tail in $C\in \mathcal{F}_n$ for some $n\in\omega$.
\end{itemize}
However, the following result shows us that this is not the case.

\begin{theorem}\label{theorem:lindelofGraph2}
    Let $G$ be a graph. Then $\Omega(G)$ is Lindel\"of if, and only if, $G$ satisfies (F).
\end{theorem}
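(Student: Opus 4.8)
The plan is to deduce both implications from the combinatorial characterization of Lindelöfness already available, namely \myref{theorem:lindelof}, which says that $\Omega(G)$ is Lindelöf exactly when every $G\setminus F$ (with $F$ finite) has only countably many ray-containing components. The guiding structural observation is that, since the sequence is $\subseteq$-increasing, passing from level $n$ to level $n+1$ only removes the finite set $F_{n+1}\setminus F_n$; hence a component $C$ of $G\setminus F_n$ with $C\cap F_{n+1}=\emptyset$ is itself a component of $G\setminus F_{n+1}$, and only the (at most $|F_{n+1}\setminus F_n|$ many) components meeting $F_{n+1}$ can be ``split''. This finiteness is exactly what makes the finite-per-level selection in (F) feasible.

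For the implication (F) $\Rightarrow$ Lindelöf I would argue by contraposition. If $\Omega(G)$ is not Lindelöf, then by \myref{theorem:lindelof} there is a finite $F$ such that uncountably many components of $G\setminus F$ contain rays. Applying (F) to the constant sequence $F_n=F$ yields finite families $\mathcal{F}_n$ of components of $G\setminus F$, so $\bigcup_{n\in\omega}\mathcal{F}_n$ is countable and therefore misses some ray-containing component $C^\ast$. Since distinct components of $G\setminus F$ are disjoint, any ray inside $C^\ast$ has a tail in a member $C$ of some $\mathcal{F}_n$ only if $C=C^\ast$; as $C^\ast\notin\bigcup_n\mathcal{F}_n$, such a ray witnesses the failure of (F), a contradiction.

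The substantive direction is Lindelöf $\Rightarrow$ (F). Fix a $\subseteq$-increasing sequence $\seq{F_n:n\in\omega}$ and, for each $n$, let $\mathcal{C}_n$ be the set of ray-containing components of $G\setminus F_n$; by \myref{theorem:lindelof} each $\mathcal{C}_n$ is countable. I would split according to the branch $\seq{G(\epsilon,F_n):n\in\omega}$ of an end $\epsilon$. First set $D_n=\set{C\in\mathcal{C}_n: C\cap F_{n+1}\neq\emptyset}$, which is finite by the structural observation, and put $D_n\subseteq\mathcal{F}_n$; this covers every end $\epsilon$ for which $G(\epsilon,F_n)\cap F_{n+1}\neq\emptyset$ for some $n$, because then $G(\epsilon,F_n)\in D_n$ and every ray of $\epsilon$ has a tail in it. The remaining ends are those with $G(\epsilon,F_n)\cap F_{n+1}=\emptyset$ for all $n$; for such an end the component $C_0=G(\epsilon,F_0)$ is unaffected at every stage, so $G(\epsilon,F_n)=C_0$ for all $n$, and $C_0$ is a component of $G\setminus F_n$ for every $n$, disjoint from $F_\infty=\bigcup_n F_n$. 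The set $\mathcal{E}$ of such components is a subset of $\mathcal{C}_0$, hence countable; enumerating $\mathcal{E}=\set{C^{(j)}:j\in J}$ with $J\subseteq\omega$, I would add $C^{(j)}$ to $\mathcal{F}_j$, which is legitimate since $C^{(j)}$ is a component of $G\setminus F_j$. Then each $\mathcal{F}_n$ has at most $|D_n|+1$ elements, hence is finite, and every end — and therefore every ray — is caught at some level.

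The main obstacle is precisely the second type of end above: the ``escaping'' components never touched by the $F_n$ can be infinitely many, so they cannot all be placed at a single level, and a naive attempt to select at a fixed level fails. The resolution — and the crux of why (F) turns out to be equivalent to Lindelöfness rather than to the Menger property, contrary to the tempting analogy with Rothberger — is that Lindelöfness (via \myref{theorem:lindelof}) bounds the number of these components by $\aleph_0$, which is exactly enough to distribute them one per level across the countably many levels, while the genuinely branching behaviour is automatically finite at each level because only $F_{n+1}\setminus F_n$ is removed. Finally, I would confirm that the non-strict reading of ``increasing'' is intended in (F) so that the constant sequence used above is admissible; should strict monotonicity be required, I would instead enlarge the $F_n$ using vertices drawn from a single fixed component, leaving the uncountably many others untouched at every level.
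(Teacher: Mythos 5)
Your proof is correct in both directions. The easy direction ((F) implies Lindel\"of, via the constant sequence and \myref{theorem:lindelof}) is exactly what the paper leaves implicit, and your closing worry about strictness is moot: the paper reads ``increasing'' non-strictly, as its own proof confirms by reducing the strictly increasing case to the constant one. For the substantive direction your bookkeeping genuinely differs from the paper's. The paper enumerates only the ray-containing components $\set{C_n:n\in\omega}$ of $G\setminus F_0$, lets $k_n$ be the largest index of a component meeting $F_{n+1}\setminus F_0$, and places $C_{k_n+1},\dots,C_{k_{n+1}}$ into $\mathcal F_{n+1}$. You instead split the ends in two: at each level $n$ you put into $\mathcal F_n$ the finitely many components of $G\setminus F_n$ (not of $G\setminus F_0$) that meet $F_{n+1}$, and you distribute the countably many ``escaping'' components --- those disjoint from $\bigcup_n F_n$, which are permanent components of every $G\setminus F_n$ --- one per level. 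Both arguments rest on the same two facts, namely the countability granted by \myref{theorem:lindelof} and the observation that removing a finite vertex set can affect only finitely many components; but your decomposition is the more robust one. The paper's assignment only ever reaches the components $C_j$ with $j\le\sup_n k_n$, so if the sets $F_{n+1}\setminus F_0$ eventually stop meeting ray-containing components of $G\setminus F_0$ (or never meet any), infinitely many $C_j$ are never placed into any $\mathcal F_n$ and the rays inside them are left uncovered; your explicit handling of the never-touched components closes precisely this case. In short: same strategy, different and in fact more complete implementation.
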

\begin{proof}
    It is clear that if $\Omega(G)$ satisfies (F), then it is Lindel\"of.

    Now suppose that $\Omega(G)$ is Lindel\"of and let \(\seq{F_n:n\in\omega}\) be a sequence of increasing finite subsets of $\mathrm{V}(G)$.
    We may assume it is strictly increasing (otherwise it has a constant subsequence and the proof is easy).
    We construct \(\seq{\mathcal{F}_n:n\in\omega}\) as follows: 
    firstly, in view of \myref{theorem:lindelof}, we may fix an enumeration $\set{C_n:n\in\omega}$ for the connected components of $G\setminus F_0$ which contain rays.
    Then, for each $n\in\omega$, let $k_n\in\omega$ be the largest integer such that $C_{k_n}\cap \left(F_{n+1}\setminus F_0\right)\neq \emptyset$.
    Note that, in this case, $C_n$ is a connected component of $G\setminus F_m$ for every $n> k_m$.
    Thus, the sequence $\seq{\mF_n:n\in\omega}$ with
    \begin{align*}
        \mF_0 &= \set{C_0, \dotsc, C_{k_0}},\\
        \mF_{n+1} &= \set{C_{k_{n}+1}, \dotsc, C_{k_{n+1}}},
    \end{align*} 
    is as desired.
\end{proof}

We end this section with a final remark.
So far, we have focused our analysis in the Menger property.
Looking at $\sigma$-compactness instead, it is possible to adapt Lemma~\ref{compactFiniteSuccessors} to derive a combinatorial characterization (relying once again on \myref{lemma:compactComponents}):

\begin{proposition}\label{proposition:sigmaCompactEnds}
    Let $G$ be a graph.
    Then $\Omega(G)$ is $\sigma$-compact if, and only if, there exists a countable family $\{H_n:n\in\omega\}$ of induced subgraphs of $G$ such that:
    \begin{enumerate}[label=(\alph*)]
        \item For every $n\in\omega$ and finite $F\subseteq G$ the number of connected components of $H_n\setminus F$ which contain rays is finite.
        \item $H_n$ is end-faithful (i.e., the inclusion mapping $i:\Omega(H_n)\to \Omega(G)$ is well-defined and injective).
        \item Every ray of $G$ is equivalent to a ray of $H_n$ for some $n\in\omega$.
    \end{enumerate}
\end{proposition}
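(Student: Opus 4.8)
The plan is to prove both directions by passing through the characterization of $\sigma$-compactness as a countable union of compact sets, using Lemma~\ref{lemma:compactComponents} to realize closed subspaces of $\Omega(G)$ as end spaces of induced subgraphs, and Lemma~\ref{compactFiniteSuccessors} (or rather its graph-theoretic analogue, the compactness characterization from \cite{diestel2006end} stated just before Theorem~\ref{theorem:lindelof}) to translate compactness of $\Omega(H_n)$ into the finiteness condition (a).

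For the forward direction, I would start by assuming $\Omega(G)$ is $\sigma$-compact, so $\Omega(G)=\bigcup_{n\in\omega} K_n$ where each $K_n$ is compact. Since $\Omega(G)$ is Hausdorff, each $K_n$ is closed in $\Omega(G)$, so Lemma~\ref{lemma:compactComponents} provides an induced subgraph $H_n$ of $G$ such that the inclusion $i\colon\Omega(H_n)\to\Omega(G)$ is a topological embedding with image $K_n$. This immediately yields (b), since the embedding being well-defined and injective is exactly the statement that two rays of $H_n$ are equivalent in $H_n$ if and only if they are equivalent in $G$. For (c), every ray of $G$ lies in some end $\epsilon\in\Omega(G)=\bigcup_n K_n$, so $\epsilon\in K_n$ for some $n$, meaning $\epsilon$ is in the image of $\Omega(H_n)$, hence the ray is equivalent to a ray of $H_n$. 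Finally, for (a), since $\Omega(H_n)$ is homeomorphic to the compact space $K_n$, the compactness characterization of end spaces tells us that for every finite $F$, only finitely many components of $H_n\setminus F$ contain rays, which is precisely condition (a).

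For the converse, I would assume such a family $\{H_n:n\in\omega\}$ exists and show $\Omega(G)=\bigcup_n i[\Omega(H_n)]$ with each $i[\Omega(H_n)]$ compact. Condition (b) guarantees that each inclusion $i\colon\Omega(H_n)\to\Omega(G)$ is well-defined and injective; one checks (routinely, from the definitions of the topologies) that it is a topological embedding. Condition (a) combined with the compactness characterization gives that each $\Omega(H_n)$ is compact, hence each $i[\Omega(H_n)]$ is a compact subset of $\Omega(G)$. Condition (c) says every ray of $G$ is equivalent to a ray of some $H_n$, so every end of $G$ lies in some $i[\Omega(H_n)]$, giving $\Omega(G)=\bigcup_n i[\Omega(H_n)]$, a countable union of compact sets.

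The main obstacle I anticipate is verifying that the inclusion map $i\colon\Omega(H)\to\Omega(G)$ is genuinely a \emph{topological embedding} (not merely a well-defined injection) in the converse direction, and correctly invoking Lemma~\ref{lemma:compactComponents} in the forward direction to know that \emph{every} closed subspace arises this way. The subtlety is that the embedding property hinges on the interplay between the basic open sets $\Omega(C,F)$ of $H$ and those of $G$: one must confirm that the topology $\Omega(H)$ inherits as a subspace of $\Omega(G)$ coincides with its intrinsic end-space topology. This is exactly the content guaranteed by Lemma~\ref{lemma:compactComponents} in one direction, and in the converse it should follow from conditions (b) and (c) ensuring that the relevant components of $H_n\setminus F$ trace out the subspace structure correctly; the author likely dispatches this as "easily shown," which is why the proposition is stated as an immediate consequence of the two cited lemmas.
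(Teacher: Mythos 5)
Your proposal is correct and follows essentially the same route the paper intends: decompose $\Omega(G)$ into closed compact pieces, realize each as $\Omega(H_n)$ via Lemma~\ref{lemma:compactComponents}, and translate compactness of each $\Omega(H_n)$ into condition (a) via the combinatorial compactness characterization of end spaces (the graph-side analogue of Lemma~\ref{compactFiniteSuccessors}); the converse reverses this, and your worry about needing a full embedding there is over-cautious, since continuity of the well-defined inclusion already makes each image compact.
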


\section{On edge-end spaces}\label{sec:edges}

In this section, we adapt our results for end spaces to the context of edge-end spaces.

Edge-end spaces arise as a natural generalization of classical end spaces, particularly when considering graphs that are not locally finite.
While end spaces are defined via vertex separations, edge-end spaces instead use separations by finite sets of edges, leading to a coarser equivalence relation among rays.
This perspective is especially relevant in infinite graph theory, where edge-based connectivity often reveals subtler topological properties and distinctions not captured by vertex-based definitions.

The following definitions and results set the stage for this analysis.
\begin{definition}[Edge-end space]
    Let $G$ be a graph.
    \begin{enumerate}[label=(\alph*)]
        \item We say that two rays $R$ and $R'$ in $G$ are \emph{edge-equivalent} if for every finite set $F\subseteq \rmE(G)$, there are tails $T$ and $T'$ of $R$ and $R'$, respectively, such that both $T$ and $T'$ lie on the same connected component of $G\setminus F$.
        \item The \emph{edge-end space} of $G$, denoted by $\Omega_E(G)$, is the set of all equivalence classes of rays in $G$.
        An element $\epsilon$ of $\Omega_E(G)$ is called an \emph{edge-end} of $G$.
        \item Let $F\subset \rmE(G)$ be a finite set, $C$ be a connected component of $G\setminus F$, and $\varepsilon\in \Omega_E(G)$.
        We define $\Omega_E(C, F)$, $\rmC(G, F)$, $G(\epsilon, F)$ and $\Omega_E(\epsilon, F)=\Omega_E(G(\epsilon, F), F)$ as in \myref{def:endspace}.
        \item We say that $U\subseteq \Omega_E(G)$ is \emph{open} if for every edge-end $\epsilon$ in $U$, there exists a finite $F\subset\rmE(G)$ such that $\Omega_E(\epsilon, F)\subseteq U$.    
        \item Furthermore, we say a vertex $v$ \emph{edge-dominates a ray} $r$ if no finite set of edges separate them, and it \emph{edge-dominates an edge-end} when it edge-dominates one of its representatives.
        \item Finally, we say that a vertex is \emph{timid} if it does not edge-dominate any ray in $G$. 
        We denote by $\mathrm{t}(G)$ the set of timid vertices of $G$.
    \end{enumerate}
\end{definition}

It should be clear that for locally finite graphs the end space and the edge-end space coincide.
On the other hand, one can easily come up with examples of non-locally finite graphs in which the edge-end equivalence is coarser than the one in end spaces.
\begin{theorem}[Theorem 3.1.4 in \cite{Boska2025}]
\label{THM_compactnesscharacterization}
    The edge-end space of a graph $G$ is compact if, and only if, for every finite set of timid vertices $F$, the collection ${\mathrm{C}}(G,F)$ of non-rayless connected components is finite.
\end{theorem}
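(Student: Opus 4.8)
The plan is to study, for each finite set $F$ of timid vertices, the assignment that sends an edge-end to the component of $G\setminus F$ in which its rays eventually live, and to show that timidity turns this into a continuous surjection onto the discrete set $\mathrm C(G,F)$. First I would prove the key separation lemma: if $F\subseteq\mathrm t(G)$ is finite, then rays lying in distinct components of $G\setminus F$ are never edge-equivalent. Indeed, given a ray $R_1$ in a component $C_1$, timidity of each $f\in F$ yields a finite edge set $D_f$ edge-separating $f$ from $R_1$; in $G\setminus\bigcup_{f\in F}D_f$ the component $K$ containing a tail of $R_1$ then avoids $F$ entirely, whence $K\subseteq C_1$, and a ray in any other component is cut away from $R_1$ by this finite edge set. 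Consequently all rays of a given $\epsilon\in\Omega_E(G)$ have tails in one and the same component, yielding a well-defined map $c_F\colon\Omega_E(G)\to \mathrm C(G,F)$ with $c_F^{-1}(C)=\Omega_E(C,F)$; the same edge-cut construction shows each $\Omega_E(C,F)$ is open, so $c_F$ is continuous, and it is surjective because every member of $\mathrm C(G,F)$ contains a ray.

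The forward implication is then immediate: if $\Omega_E(G)$ is compact, its continuous image $c_F[\Omega_E(G)]=\mathrm C(G,F)$ is a compact discrete space, hence finite, for every finite $F\subseteq\mathrm t(G)$.

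For the converse I would argue by ultrafilter convergence. Assume every $\mathrm C(G,F)$ with $F\subseteq\mathrm t(G)$ finite is finite, and fix an ultrafilter $\mathcal U$ on $\Omega_E(G)$; we may assume $G$ connected. For each finite edge set $D$ the graph $G\setminus D$ has only finitely many components (deleting an edge raises the component count by at most one), so the ray-containing ones index a finite clopen partition $\{\Omega_E(C,D)\}$ of $\Omega_E(G)$, and $\mathcal U$ concentrates on a unique such $d(D)$; these cohere into an edge-direction $d$ (the edge analogue of the directions in \myref{theorem:directions}). The crux is to realise $d$ by an actual edge-end. The only obstruction is a \emph{spurious} direction, trying to converge to an infinite-degree vertex rather than to an end, and here timidity is decisive: a dominating vertex's direction is always realised by an edge-end it edge-dominates, so a spurious direction can only be concentrated at a timid vertex $w$ carrying infinitely many ray-containing components of $G\setminus\{w\}$ — exactly what the finiteness hypothesis applied to $\{w\}$ forbids. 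Thus, tracking the component selected by $\mathcal U$ over an increasing sequence of finite timid sets (at each stage a choice among finitely many possibilities) yields a thread that the hypothesis forces to pass through every timid vertex, from which one builds a ray whose edge-end $\epsilon$ satisfies $d=d_\epsilon$; then every basic neighbourhood $\Omega_E(\epsilon,D)=\Omega_E(d(D),D)$ lies in $\mathcal U$, so $\mathcal U\to\epsilon$ and $\Omega_E(G)$ is compact.

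I expect the main obstacle to be precisely this realisation step, because the timid level does not record the full edge-end topology: two edge-ends separated only by a single edge joining two dominating vertices induce the \emph{same} component in every $G\setminus F$ with $F$ timid, so $\Omega_E(G)$ is strictly finer than $\varprojlim_F \mathrm C(G,F)$ and the naive ``closed embedding into a product of finite discrete spaces'' fails. The delicate point is therefore to show that, under the finiteness hypothesis, no edge-direction is spurious and every direction is carried by a genuine ray; this is where the interaction between timidity (no timid vertex edge-dominates a ray) and the finiteness of $\mathrm C(G,\{w\})$ for timid $w$ must be exploited, most cleanly through an edge-end version of the direction theorem classifying edge-directions as arising either from edge-ends or from infinite-degree vertices.
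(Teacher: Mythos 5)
First, a point of reference: the paper does not prove this statement at all --- it is imported from \cite{Boska2025} --- so there is no in-paper proof to compare against. Judged on its own merits, your forward direction is correct and essentially complete: the separation lemma (timidity of each $f\in F$ yields a finite edge set $D_f$, and in $G\setminus\bigcup_f D_f$ the component of a tail of $R_1$ avoids $F$ and hence sits inside a single component of $G\setminus F$) does give a well-defined continuous surjection $c_F\colon\Omega_E(G)\to\mathrm{C}(G,F)$ onto a discrete set, and compactness then forces $\mathrm{C}(G,F)$ to be finite.

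The converse, however, has a genuine gap exactly where you locate it. Reducing compactness to the statement ``every ray-carrying edge-direction is realised by an edge-end'' is fine (for $G$ connected each $G\setminus D$ has at most $|D|+1$ components, so the clopen partitions are finite and an ultrafilter does select a coherent thread), but the realisation step is the entire content of the theorem and your argument for it is not a proof. The assertion that a non-realised direction ``can only be concentrated at a timid vertex $w$ carrying infinitely many ray-containing components of $G\setminus\{w\}$'' is never justified, and the phrase ``a thread that the hypothesis forces to pass through every timid vertex'' does not describe a construction: the direction is indexed by finite \emph{edge} sets while the hypothesis constrains finite \emph{timid vertex} sets, and bridging these two scales is precisely the difficulty you yourself identify in the last paragraph (the inverse limit $\varprojlim_F\mathrm{C}(G,F)$ over timid $F$ does not recover the topology, and no edge-analogue of \myref{theorem:directions} is proved or cited). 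One would at minimum need the case analysis: if some vertex $w$ lies in $d(D)$ for every finite edge set $D$, then either $w$ edge-dominates a ray (and that ray realises $d$) or $w$ is timid, in which case one must extract a contradiction with the finiteness of $\mathrm{C}(G,F)$ for finite $F\subseteq\mathrm{t}(G)$; and if no vertex survives in all $d(D)$, one must actually build a ray following $d$. None of this is carried out. A cleaner route, consistent with how the paper handles the Lindel\"of degree in \myref{COR_LindelofEdgeEnds}, is to transfer the problem through $\Omega_E(G)\cong\Omega(H_G)$ (\myref{THM_HGHomeo}), apply the known vertex-end compactness characterisation of \cite{diestel2006end} to $H_G$, and translate the component counts back via \myref{LEMMA_Compon_Bijec} and \myref{PROP_Dominant_FinComp}; this avoids having to develop a theory of edge-directions from scratch.
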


Given a graph $G$, consider the following graph $H_G$, which is obtained by expanding edge-dominating vertices to cliques. Formally, for each edge-dominating vertex $v\in \mathrm{V}(G)$ we will add a complete graph $K_v \doteq K_{\deg{v}}$ with $\deg(v)$ many vertices.
This construction appears in \cite{aurichi2024topologicalremarksendedgeend}.
Each $v$-neighbour $u$ corresponds to a vertex $u^v \in\mathrm{V}(K_v)$.
Denoting by $D$ the set of vertices of $G$ that edge-dominate some ray, the vertex set of $H_G$ is given by \[\mathrm{V}(H_G) = (\mathrm{V}(G)\setminus D) \cup\displaystyle \bigcup_{v\in D}\mathrm{V}(K_v),\] while its edge set is defined by the following conditions:
\begin{itemize}
    \item[(i)] Add the edge $\{u_1^v,u_2^v\}$ for each pair of neighbors $u_1,u_2$ of the edge-dominating $v$, as to make $K_v$ into a complete sub-graph.
    \item[(ii)] We maintain the $G$ edges $\{u,v\}$ for timid pairs.
    \item[(iii)] Add the edge $\{u,u^v\}$ if $u$ is a timid neighbor of the dominating $v$.
    \item[(iv)] Add the edge $\{v^u,u^v\}$ if $u,v \in D$ are neighboring edge-dominating vertices.
\end{itemize}

We will rely on throughout the remainder of this section in the following results:
\begin{theorem}[Theorem 2.1 in \cite{aurichi2024topologicalremarksendedgeend}]\label{THM_HGHomeo}
For every graph $G$, $\Omega(H_G)\cong \Omega_E(G)$.
\end{theorem}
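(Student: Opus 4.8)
The plan is to build an explicit homeomorphism $\bar\Phi\colon\Omega_E(G)\to\Omega(H_G)$ induced by a map on rays, and to reduce the whole statement to a dictionary between \emph{finite edge separators} of $G$ and \emph{finite vertex separators} of $H_G$. First I would record two structural facts. A vertex of finite degree is separated from every ray by deleting its finitely many incident edges, so it is timid; hence every $v\in D$ has infinite degree and each clique $K_v=K_{\deg v}$ is a copy of $K_{\aleph_0}$ (or larger), whose end space is a single point. Moreover, if a vertex $v$ edge-dominates two rays $R,R'$, then for each finite $F\subseteq\rmE(G)$ both tails stay joined to $v$ in $G\setminus F$, so $R$ and $R'$ are edge-equivalent; thus each $v\in D$ dominates exactly one edge-end. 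These facts already predict the correspondence: the unique end contributed by each clique $K_v$ should match the unique edge-end dominated by $v$.

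Next I would set up the bookkeeping via the projection $\pi\colon\rmV(H_G)\to\rmV(G)$ sending a timid vertex to itself and a clique vertex $u^v$ to $v$. Every edge of $H_G$ is either a clique edge (collapsed by $\pi$) or a \emph{crossing} edge projecting to a unique edge of $G$, and conversely each edge of $G$ lifts to exactly one crossing edge. Using this, I define $\Phi\colon\mR(G)\to\mR(H_G)$ by replacing each occurrence of a dominating vertex $v$ along a ray by the two clique vertices encoding its incoming and outgoing edges, joined through $K_v$. I would check that $\Phi(R)$ is a ray and that, up to tails, every ray of $H_G$ arises this way: projecting a ray $\rho$ of $H_G$ through $\pi$ gives a walk in $G$ that either visits infinitely many vertices---hence contains a ray $R$ with $\Phi(R)$ equivalent to $\rho$---or is eventually constant at some $v\in D$, in which case $\rho$ eventually lives inside $K_v$ and is equivalent to $\Phi(R_v)$ for the unique ray $R_v$ dominated by $v$. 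This yields surjectivity of the induced map on ends.

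The forward half of the dictionary is the easy one. To a finite $F\subseteq\rmE(G)$ I assign the finite set $S_F\subseteq\rmV(H_G)$ consisting of the two ``ports'' (endpoints) of each crossing edge lying over an edge of $F$. Any path in $H_G\setminus S_F$ projects under $\pi$ to a walk in $G$ that cannot traverse an $F$-edge, since the corresponding crossing edge has both of its ports deleted; hence $F$ separating two rays forces $S_F$ to separate their $\Phi$-images. This gives the implication $\Phi(R)\sim\Phi(R')\Rightarrow R\sim_E R'$, and, because distinct components of $G\setminus F$ are sent into distinct components of $H_G\setminus S_F$, the inclusion $\Omega(\bar\Phi(\epsilon),S_F)\subseteq\bar\Phi[\Omega_E(\epsilon,F)]$, i.e. openness of $\bar\Phi$.

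The main obstacle is the backward half: translating a finite vertex separator $S$ of $H_G$ into a finite edge separator $F$ of $G$, which is what both well-definedness and continuity of $\bar\Phi$ require. The clique vertices $u^v\in S$ cause no trouble, as each simply contributes the edge $\{u,v\}$ to $F$. The difficulty is a timid vertex $w\in S$: it may have infinite degree, so I cannot delete all edges at $w$ and keep $F$ finite. Here the definition of timidity is exactly what rescues the argument---since $w$ dominates no ray, it does not dominate the fixed edge-end $\epsilon$, so there is a \emph{finite} edge set $F_w$ separating $w$ from $\epsilon$; I add each such $F_w$ to $F$, together with the edges coming from the clique vertices of $S$. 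With $F$ so chosen, lying in the component $G(\epsilon,F)$ forces the $\Phi$-image into the $S$-component of $\bar\Phi(\epsilon)$, giving both $R\sim_E R'\Rightarrow\Phi(R)\sim\Phi(R')$ and $\bar\Phi[\Omega_E(\epsilon,F)]\subseteq\Omega(\bar\Phi(\epsilon),S)$. Combining the two directions (so that $R\sim_E R'$ iff $\Phi(R)\sim\Phi(R')$) with surjectivity makes $\bar\Phi$ a well-defined bijection, and openness together with continuity make it a homeomorphism, yielding $\Omega_E(G)\cong\Omega(H_G)$; the only genuinely delicate point is this last paragraph, where the mismatch between vertex- and edge-separation must be mediated entirely by timidity.
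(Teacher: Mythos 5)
This statement is imported verbatim from \cite{aurichi2024topologicalremarksendedgeend} (Theorem~2.1 there) and the present paper gives no proof of it, so there is nothing in this document to compare your argument against; I can only assess the proposal on its own terms. On those terms it is essentially correct: the two-way dictionary between finite edge separators of $G$ and finite vertex separators of $H_G$ is the right engine, the forward translation $F\mapsto S_F$ (ports of the crossing edges over $F$) does give injectivity and openness of $\bar\Phi$ exactly as you say, and you have correctly located the only genuinely delicate point, namely that a timid vertex $w$ in a vertex separator $S$ of $H_G$ cannot be handled by deleting all edges at $w$; using timidity to produce a finite edge set $F_w$ separating $w$ from the fixed edge-end $\epsilon$, so that $w\notin G(\epsilon,F)$ and hence never appears on the lifted connecting paths, is precisely what makes the backward translation work.

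Two places are glossed more than the rest and deserve a sentence each if this were written out. First, in the surjectivity argument, when $\pi(\rho)$ visits only finitely many vertices of $G$ you assert it is \emph{eventually constant} at some $v\in D$; this is true but needs the observation that any two cliques $K_v$, $K_w$ (and any clique and any timid vertex) are joined by a \emph{single} crossing edge, so an injective ray can switch cliques only finitely often before being trapped in one $K_v$. Second, the identification of the unique end of $K_v$ inside $H_G$ with $\bar\Phi$ of the unique edge-end dominated by $v$ is used but not proved; it follows by running your backward dictionary once more (lift a $v$--$R_v$ path in $G\setminus F$ to a path from a port of $K_v$ into a tail of $\Phi(R_v)$, then use the clique to reach a tail of the given ray of $K_v$), but it is a real step, not a formality. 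Neither point is a gap in the strategy, only in the write-up.
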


Indeed, we immediately get the following corollaries.

\begin{corollary}\label{CharPseudoCharEdgeEnds}
    For every graph $G$ and $\varepsilon\in \Omega_E(G)$, we have that $\chi(\varepsilon,\Omega_E(G))=\psi(\varepsilon,\Omega_E(G))$.
    Thus, $\chi(\Omega_E(G))=\psi(\Omega_E(G))$.
\end{corollary}

\begin{corollary}\label{COR_RothEdgeEnds}
    Let $G$ be a graph. Then, the following are equivalent:
    \begin{enumerate}[label=(\alph*)]
        \item $\Omega_E(G)$ is Rothberger.

        \item $\Omega_E(G)$ is Lindelöf and does not contain a copy of the Cantor space.

        \item $\Omega_E(G)$ is Lindelöf and scattered.

        \item For every strictly $\subseteq$-increasing sequence $(F_n:n\in\omega)$ of finite subsets of $\mathrm{E}(G)$ there is a sequence $(C_n:n\in\omega)$, where each $C_n$ is a connected component of $G\setminus F_n$, so that every ray of $G$ has a tail in $C_n$ for some $n\in\omega$.
    \end{enumerate}
\end{corollary}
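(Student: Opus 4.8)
The plan is to reduce everything to the ordinary end space $\Omega(H_G)$ via the homeomorphism $\Omega_E(G)\cong\Omega(H_G)$ of \myref{THM_HGHomeo}, and then to recycle, with edges in place of vertices, the argument already carried out for \myref{COR_RothbergerEnds}. Since being Rothberger, being Lindelöf, containing a copy of the Cantor space, and being scattered are all topological invariants, the equivalences $(a)\Leftrightarrow(b)\Leftrightarrow(c)$ transfer at once: $\Omega_E(G)\cong\Omega(H_G)$ is an end space, hence by \myref{theorem:representation} a ray space of a special tree, so \myref{THM_RaySpaceRothberger} applies verbatim. The only genuine content is therefore the equivalence of the combinatorial condition $(d)$ with, say, condition $(b)$, and I would establish this by proving $(a)\Rightarrow(d)\Rightarrow(b)$ directly in terms of finite edge sets of $G$.

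For $(a)\Rightarrow(d)$ I would argue directly. Given a strictly $\subseteq$-increasing sequence $(F_n:n\in\omega)$ of finite edge sets, the families $\mathcal U_n=\set{\Omega_E(C,F_n): C\in \rmC(G,F_n)}$ are open covers of $\Omega_E(G)$. Applying the Rothberger property to $(\mathcal U_n:n\in\omega)$ yields components $C_n\in\rmC(G,F_n)$ with $\bigcup_n\Omega_E(C_n,F_n)=\Omega_E(G)$. For any ray $r$, its edge-end lies in some $\Omega_E(C_n,F_n)$, so some representative of $[r]$ is contained in $C_n$; as edge-equivalent rays have tails in a common component of $G\setminus F_n$, the ray $r$ itself has a tail in $C_n$, which is exactly $(d)$.

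The substance lies in $(d)\Rightarrow(b)$, which I would prove by contraposition in two cases. If $\Omega_E(G)$ fails to be Lindelöf, then by the edge-analogue of \myref{theorem:lindelof} (equivalently, via \myref{THM_HGHomeo} and \myref{theorem:lindelof} applied to $H_G$) there is a finite $F\subseteq\rmE(G)$ for which $G\setminus F$ has uncountably many ray-carrying components. Here the \emph{strict} increase required in $(d)$ forces a small twist absent from the vertex setting: I would pad $F$ to a strictly increasing sequence $F_n=F\cup\set{e_1,\dots,e_n}$. Deleting further edges only refines the component partition while leaving a tail of each ray intact, so each $G\setminus F_n$ still has uncountably many ray-components, each lying inside a unique ray-component of $G\setminus F$; any countable selection $(C_n)$ then misses some ray-component $C^\ast$ of $G\setminus F$, and no ray contained in $C^\ast$ can have a tail in any $C_n$, so $(d)$ fails.

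If instead $\Omega_E(G)$ contains a copy of the Cantor space via an embedding $\psi\colon 2^\omega\to\Omega_E(G)$, I would run the recursion of \myref{COR_RothbergerEnds}: using the injectivity and continuity of $\psi$, recursively build a strictly increasing sequence of finite edge sets $F_k$ and indices $n_k$ so that the two halves of each cylinder of $2^\omega$ are separated by disjoint neighbourhoods of the form $\Omega_E(\cdot,F_k)$, and so that on level $k$ the surviving branches are steered into a component of $G\setminus F_k$ that any prescribed selection must avoid; a diagonal $f\in 2^\omega$ then produces a ray $\psi(f)$ with no tail in any selected component. The hard part will be precisely this construction: one must check that the edge-end neighbourhoods $\Omega_E(\epsilon,F)$ support the same separation-and-continuity bookkeeping as their vertex counterparts, so that the cylinders of $2^\omega$ can be split by finitely many edge deletions, and one must thread the strict-increase requirement through the recursion. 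Both go through because $\Omega_E(\epsilon,F)$ behaves formally like $\Omega(\epsilon,F)$ and because $G$ necessarily has infinitely many edges whenever $\Omega_E(G)$ is nontrivial, leaving room to enlarge each $F_k$ at every stage.
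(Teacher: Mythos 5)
Your proposal is correct and follows essentially the same route as the paper: the equivalences (a)$\Leftrightarrow$(b)$\Leftrightarrow$(c) are transferred through the homeomorphism $\Omega_E(G)\cong\Omega(H_G)$ of \myref{THM_HGHomeo} together with \myref{COR_RothbergerEnds}, (a)$\Rightarrow$(d) is the direct cover argument, and (d)$\Rightarrow$(b) is the vertex-case argument of \myref{COR_RothbergerEnds} rerun with finite edge sets in place of finite vertex sets. Your explicit handling of the strict-increase requirement (padding by extra edges) is a detail the paper leaves implicit, but it does not change the argument.
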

\begin{proof}
    Equivalences (a)$\Leftrightarrow$(b)$\Leftrightarrow$(c) follow from \myref{THM_HGHomeo} and \myref{COR_RothbergerEnds}.

    (a) implies (d) is clear, since $\rmC(G,F)$ gives us an open cover of $\Omega_E(G)$ for every finite $F\subseteq \rmE(G)$.
    
    At last, the proof of (d) implies (b), is completely analogous to the proof of \myref{COR_RothbergerEnds}(d) implies (b) (one only needs to switch the finite sets of vertices for finite sets of edges in it).
\end{proof}

\begin{corollary}\label{COR_CountableEdgeEnds}
    A graph $G$ has countably many edge-ends if, and only if, all the following conditions hold:
    \begin{enumerate}[label=(\roman*)]
        \item\label{item_EdgeEndRoth} For every $\subseteq$-increasing sequence $(F_n:n\in\omega)$ of finite subsets of $\mathrm{E}(G)$ there is a sequence $(C_n:n\in\omega)$, where each $C_n$ is a connected component of $G\setminus F_n$, so that every ray of $G$ has a tail in $C_n$ for some $n\in\omega$.
        \item\label{item_EdgeEndGdelta} For every ray $R$ in $G$ there exists a $\subseteq$-increasing sequence $(F_n:n\in\omega)$ of finite subsets of $\mathrm{E}(G)$ such that, for every ray $R'$ in $G$ which is not edge-equivalent to $R$, there exists an $n\in\omega$ such that $R$ and $R'$ lie in different connected components of $G\setminus F_n$.
    \end{enumerate}
\end{corollary}
\begin{proof}
    It is clear that \ref{item_EdgeEndGdelta} is equivalent to every edge-end of $G$ being the intersection of countably many basic open sets in $\Omega_E(G)$.
    Thus, the proof follows directly from \myref{COR_RothEdgeEnds} and \myref{corollary:CountableRayRothberger}, in view of \myref{theorem:representation} and \myref{THM_HGHomeo}.
\end{proof}

\begin{corollary}\label{COR_MengerEdgeEnds}
    Let $G$ be a graph. Then, the following are equivalent:
    \begin{enumerate}[label=(\alph*)]
        \item $\Omega_E(G)$ is $\sigma$-compact.
        \item $\Omega_E(G)$ is Menger.
        \item $\Omega_E(G)$ is Lindelöf and does not contain a closed copy of the Baire space.
    \end{enumerate}
    Moreover, $\Omega_E(G)$ is a $D$-space.
\end{corollary}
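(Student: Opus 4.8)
The plan is to reduce every assertion to the corresponding statement about end spaces via the homeomorphism $\Omega_E(G) \cong \Omega(H_G)$ supplied by \myref{THM_HGHomeo}, following verbatim the strategy used for the Rothberger property in \myref{COR_RothEdgeEnds}. The point is that $H_G$ is an honest graph, so $\Omega(H_G)$ is a bona fide end space to which all the results of the previous sections apply; the clique-expansion construction has already done the work of converting edge-separations in $G$ into vertex-separations in $H_G$.

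For the equivalence of (a), (b) and (c): each of $\sigma$-compactness, the Menger property, and ``Lindel\"of together with containing no closed topological copy of the Baire space'' is a topological invariant, and is therefore preserved by the homeomorphism of \myref{THM_HGHomeo}. Applying \myref{COR_EndMengerCharacterization} to the end space $\Omega(H_G)$ yields that these three properties are equivalent for $\Omega(H_G)$, and transporting the equivalence back across the homeomorphism gives it for $\Omega_E(G)$. No fresh combinatorial argument is required.

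For the final clause that $\Omega_E(G)$ is a $D$-space: recall from Section~3 that every ray space is a $D$-space and that, by the representation theorem \myref{theorem:representation}, every end space is homeomorphic to the ray space of a special rooted tree; consequently every end space of a graph is a $D$-space. In particular $\Omega(H_G)$ is a $D$-space, and since being a $D$-space is a topological invariant, the homeomorphism $\Omega_E(G) \cong \Omega(H_G)$ transfers this property to $\Omega_E(G)$.

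I do not anticipate a genuine obstacle: all the substantive work has been carried out already for end and ray spaces, and the only thing to check is that each property in play is a topological invariant, which is immediate. The one feature worth flagging is that, in contrast to \myref{COR_RothEdgeEnds}, no purely combinatorial reformulation in terms of finite edge-sets and components of $G\setminus F$ is asserted here; a faithful such translation would have to carry the delicate doubly-indexed families of \myref{COR_EndMengerCharacterization}(d) across the clique-expansion $H_G$ and keep track of timid vertices, so we content ourselves with the topological equivalences, which follow at once from \myref{THM_HGHomeo}.
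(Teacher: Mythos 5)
Your proposal is correct and is exactly the paper's (implicit) argument: the corollary is stated without proof as an immediate consequence of \myref{THM_HGHomeo}, transporting the equivalences of \myref{COR_EndMengerCharacterization} and the $D$-space property of ray/end spaces from Section~3 across the homeomorphism $\Omega_E(G)\cong\Omega(H_G)$. Your remark about not asserting a combinatorial reformulation in terms of edge-sets also matches the paper, which omits such an item here and only discusses the timid-vertex subtleties for compactness, Lindel\"ofness and Rothberger.
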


Now suppose that $F$ is a finite set of timid vertices in $G$. Given a connected component $C\in \rmC(G,F)$ with $v\in C$, let $C_H\in \mathrm{C}(H_G,F)$ be the connected component containing $v$ if $v$ is timid, or, otherwise, containing $v_e$ for some edge $e\in C$ adjacent to $v$. We will thus rely on the following technical results for the final part of this section:

\begin{proposition}[Proposition 3.1.2 in \cite{Boska2025}]\label{PROP_Dominant_FinComp}
    Let $G = (V,E)$ be a connected graph and $e\in E$ be an edge adjacent to a vertex $v$ which edge-dominates some ray in $G$. Then $|\mathrm{C}(H_G, \{v_e\})|\le 2$.
\end{proposition}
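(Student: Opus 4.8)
The plan is to prove the stronger statement that $H_G \setminus \{v_e\}$ has at most two connected components \emph{in total}; since $\mathrm{C}(H_G, \{v_e\})$ collects only those components that contain a ray, this at once yields $|\mathrm{C}(H_G, \{v_e\})| \le 2$. Everything hinges on a careful description of the neighbourhood of the clique-vertex $v_e$ in $H_G$.

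First I would record that $\deg_G(v) \ge 2$: if $v$ were incident to at most one edge, then deleting that single (finite) set of edges would isolate $v$ and thereby separate it from every ray, contradicting the hypothesis that $v$ edge-dominates some ray. Hence the clique $K_v = K_{\deg v}$ has at least two vertices and $K_v \setminus \{v_e\}$ is nonempty.

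The heart of the argument is then to read off the neighbours of $v_e$ in $H_G$ directly from the four rules (i)--(iv) defining the construction. Writing $e = \{v,u\}$, so that $v_e = u^v \in K_v$, rule (i) makes $v_e$ adjacent to each of the remaining vertices of $K_v$, whereas rules (ii)--(iv) contribute exactly one further, \emph{external} neighbour $w$: one has $w = u$ when $u$ is timid (rule (iii)), and $w = v^u \in K_u$ when $u \in D$ (rule (iv)), while rule (ii) never attaches to a clique-vertex. The step I expect to demand the most care is precisely this bookkeeping --- checking, rule by rule, that no other edge of $H_G$ is incident to $v_e$, so that $N_{H_G}(v_e) = (K_v \setminus \{v_e\}) \cup \{w\}$ with a single vertex $w$ lying outside $K_v$.

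With the neighbourhood identified, the count is immediate. Since only the type-(i) edges run between vertices of $K_v$ and these survive the deletion of $v_e$, the nonempty complete graph $K_v \setminus \{v_e\}$ stays connected and so lies in a single component $A$ of $H_G \setminus \{v_e\}$. Because $H_G$ is connected --- a standing feature of the construction of \cite{aurichi2024topologicalremarksendedgeend}, obtained by lifting any path of $G$ to a walk in $H_G$ routed through the relevant cliques --- every component of $H_G \setminus \{v_e\}$ must contain a neighbour of $v_e$. All such neighbours except $w$ already belong to $A$, so any component distinct from $A$ is forced to contain $w$; as $w$ is a single vertex, there is at most one component besides $A$. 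Thus $H_G \setminus \{v_e\}$ splits into at most two components, and in particular at most two of them contain rays, giving $|\mathrm{C}(H_G, \{v_e\})| \le 2$.
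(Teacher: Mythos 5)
The paper does not reproduce a proof of this statement at all: it is quoted as Proposition~3.1.2 of \cite{Boska2025} and used as a black box, so there is nothing in the text to compare your argument against. On its own merits, your proof is correct and self-contained. The two essential observations both check out: first, a vertex $v$ that edge-dominates a ray cannot have degree $1$ (indeed it must have infinite degree, since deleting all finitely many incident edges would isolate it), so $K_v\setminus\{v_e\}$ is a nonempty clique and hence lies in a single component $A$ of $H_G\setminus\{v_e\}$; second, your rule-by-rule audit of the neighbourhood of $v_e=u^v$ is accurate --- rule (i) gives the rest of $K_v$, rule (ii) only ever joins two timid vertices and so never touches a clique vertex, and since $u$ is either timid or edge-dominating (exactly one of the two), exactly one of rules (iii) and (iv) fires, contributing the single external neighbour $w\in\{u,\,v^u\}$. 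Combined with the connectedness of $H_G$ (which does follow by lifting $G$-paths through the cliques, using (ii)--(iv) to cross each edge), every component of $H_G\setminus\{v_e\}$ must meet $N_{H_G}(v_e)=(K_v\setminus\{v_e\})\cup\{w\}$, so there are at most two components in total and a fortiori at most two that contain rays. This actually proves slightly more than the cited statement, which only bounds the number of non-rayless components.
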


\begin{proposition}[Lemma 3.1.3 in \cite{Boska2025}]\label{LEMMA_Compon_Bijec}
    If $F\subset \rmt(G)$ (and, thus, $F\subset \rmV(H_G)$) is finite, then the association 
    \begin{align*}
        \varphi\colon \: \mathrm{C}(G,F)\,&\to \,\mathrm{C}(H_G,F)\\
        C&\mapsto C_H
    \end{align*}
    is a well-defined bijection.
\end{proposition}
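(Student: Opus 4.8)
The plan is to study the contraction map $\pi\colon \rmV(H_G)\to\rmV(G)$ that fixes every timid vertex and sends each clique vertex $u^v\in K_v$ to the edge-dominating vertex $v$. Because the elements of $F$ are timid, they survive the clique-expansion unchanged, so $F\subseteq \rmV(H_G)$ and $\pi$ restricts to $H_G-F\to G-F$. First I would record the bookkeeping facts that follow directly from the construction (i)--(iv): every edge of $G$ lifts to a unique edge of $H_G$, every edge of $H_G$ either lies inside a clique or projects onto an edge of $G$, and each clique $K_v$ stays connected in $H_G-F$. These give that $\pi$ maps connected sets to connected sets and that $\pi^{-1}(C)$ is connected for every connected $C\subseteq \rmV(G)\setminus F$: a path in $C$ is lifted edge by edge, travelling inside the intact cliques $K_v$ whenever it meets an edge-dominating vertex. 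A routine maximality argument then shows $C\mapsto \pi^{-1}(C)$ is a bijection between the components of $G-F$ and those of $H_G-F$. Since every witness allowed in the definition of $C_H$ (namely $v$ when $v$ is timid, or a clique vertex $v_e\in K_v$ when $v$ is edge-dominating) projects into $C$ and hence lies in $\pi^{-1}(C)$, we get $\varphi(C)=\pi^{-1}(C)$; in particular $C_H$ does not depend on the choices of $v$ and $e$, which establishes that $\varphi$ is well defined as a map of components, and it is injective because preimages of distinct components are disjoint.

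It remains to see that this bijection carries $\rmC(G,F)$ onto $\rmC(H_G,F)$, that is, that $C$ is non-rayless if and only if $\pi^{-1}(C)$ is. For the forward implication I would lift a ray $(r_0,r_1,\dots)$ of $C$ to a genuine ray of $\pi^{-1}(C)$: at each edge-dominating $r_i$ one enters $K_{r_i}$ at the clique vertex coming from $r_{i-1}$ and leaves it at the clique vertex heading to $r_{i+1}$, and since $(r_i)$ is injective these entry/exit vertices (together with the timid $r_i$'s) are pairwise distinct, so the lift is again an injective path. The delicate point -- and what I expect to be the main obstacle -- is the reverse implication, because when $v$ is edge-dominating of infinite degree the clique $K_v$ is an infinite complete graph and may itself contain rays; thus a ray of $\pi^{-1}(C)$ need not project to a ray of $C$.

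To handle this I would prove the key reduction: \emph{if $v\in C$ is edge-dominating then $C$ already contains a ray}. Granting it, $C$ rayless forces every vertex of $C$ to be timid, whence $\pi^{-1}(C)=C$ has no cliques and is rayless -- this is exactly the contrapositive of the reverse implication. The reduction is where the timidity of $F$ enters. Since $v$ edge-dominates some ray $S$, no finite edge set separates $v$ from $S$, so a greedy argument (remove the finitely many edges of the paths already found and repeat) produces infinitely many pairwise edge-disjoint paths from $v$ to $S$. On the other hand, each $f\in F$ is timid, hence separated from $S$ by some finite edge set $E_f$; as every path from $v$ to $S$ through $f$ contains an $f$-to-$S$ subpath, and these subpaths inherit edge-disjointness, at most $|E_f|$ of our paths can pass through $f$. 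Deleting $F$ therefore destroys only finitely many of the infinitely many edge-disjoint $v$-to-$S$ paths, so infinitely many survive in $G-F$ and join $v$ to a tail of $S$; since that tail avoids the finite set $F$ and is connected, it lies entirely in $C$, supplying the desired ray. (Proposition~\ref{PROP_Dominant_FinComp} may be invoked to control the local picture at a single clique vertex, but it is not essential to this argument.) Combining the two implications with the component bijection of the first paragraph, $\varphi$ restricts to a bijection $\rmC(G,F)\to\rmC(H_G,F)$, as claimed.
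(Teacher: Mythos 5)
The paper does not actually prove this statement: it is imported verbatim as Lemma~3.1.3 of \cite{Boska2025}, so there is no in-paper argument to compare yours against. On its own terms, your proof strategy is sound and, as far as I can check against the construction (i)--(iv) of $H_G$, correct: the contraction $\pi$ does induce a bijection $C\mapsto\pi^{-1}(C)$ between components of $G-F$ and of $H_G-F$ (using that $F$ is timid, hence disjoint from every clique $K_v$, so each $K_v$ survives intact and keeps $\pi^{-1}(C)$ connected), the ray-lifting argument for the forward implication works because the entry/exit clique vertices $r_{i-1}^{r_i}, r_{i+1}^{r_i}$ are pairwise distinct, and you correctly identify the reverse implication as the real content. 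Your key reduction --- an edge-dominating vertex of $C$ forces a ray in $C$ --- is exactly the right lemma and is where the hypothesis $F\subseteq\rmt(G)$ is genuinely used.

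One step should be tightened. You argue that at most $|E_f|$ of the edge-disjoint $v$--$S$ paths pass through $f$ because each such path contains an $f$-to-$S$ subpath that must meet $E_f$. But $E_f$ only separates $f$ from a \emph{tail} $S_f$ of $S$; a $v$--$S$ path ending on the finite initial segment $S\setminus S_f$ yields an $f$-to-$S$ subpath that need not meet $E_f$, and infinitely many edge-disjoint paths could a priori end there. The fix is to build the greedy paths after first deleting $E^*=\bigcup_{f\in F}E_f$ together with the (finitely many) edges of $S$ incident to $F\cup(S\setminus S')$ for a common tail $S'\subseteq\bigcap_f S_f$: edge-domination guarantees $v$ stays connected to the surviving tail of $S$, which now lies in every $S_f$ and avoids $F$, so the resulting path meets no $f\in F$ and its endpoint extends along $S'$ to a ray of $G-F$ in the component of $v$. (With this arrangement a single path already suffices; the infinite edge-disjoint family is not needed.)
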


With these tools, we can now prove the last of the main results of this section: a combinatorial characterization of the Lindelöf degree of edge-end spaces.

\begin{corollary}\label{COR_LindelofEdgeEnds}
    Let $G$ be a graph. 
    Then, $L(\Omega_E(G))\le\kappa$ if, and only if, for every finite $F\subset \rmt(G)$, $|\rmC(G,F)|\le \kappa$.
\end{corollary}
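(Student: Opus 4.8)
The plan is to transfer the problem to ordinary end spaces through the homeomorphism $\Omega_E(G)\cong\Omega(H_G)$ of \Cref{THM_HGHomeo} and then to invoke the characterization of the Lindelöf degree already proved for end spaces. Since $L(\Omega_E(G))=L(\Omega(H_G))$, applying \Cref{theorem:lindelofDegree} to $H_G$ reduces the statement to the following purely combinatorial equivalence: for every finite $F'\subseteq\rmV(H_G)$ one has $|\rmC(H_G,F')|\le\kappa$ if, and only if, for every finite $F\subseteq\rmt(G)$ one has $|\rmC(G,F)|\le\kappa$. As usual, I may assume that $G$ is connected. The structural observation I will use throughout is that, by construction, $\rmV(H_G)$ is the disjoint union of the timid vertices $\rmt(G)=\rmV(G)\setminus D$ and the clique vertices $\bigcup_{v\in D}\rmV(K_v)$.

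The forward implication is immediate: if $F\subseteq\rmt(G)$ is finite then $F\subseteq\rmV(H_G)$, so the hypothesis gives $|\rmC(H_G,F)|\le\kappa$, and the bijection $\varphi\colon\rmC(G,F)\to\rmC(H_G,F)$ of \Cref{LEMMA_Compon_Bijec} yields $|\rmC(G,F)|=|\rmC(H_G,F)|\le\kappa$.

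For the converse, fix a finite $F'\subseteq\rmV(H_G)$ and split it as $F'=F_t\cup F_c$, where $F_t=F'\cap\rmt(G)$ and $F_c$ consists of the clique vertices of $F'$. The timid part is controlled by the previous paragraph: via $\varphi$ we get $|\rmC(H_G,F_t)|=|\rmC(G,F_t)|\le\kappa$. It then remains to see that adjoining the finitely many clique vertices of $F_c$ increases the number of non-rayless components only finitely. The crucial local fact---which is precisely the mechanism behind \Cref{PROP_Dominant_FinComp}---is that, by the defining adjacency rules (i)--(iv) of $H_G$, every clique vertex $u^v$ has all of its neighbours inside its own clique $K_v$ except for a single external neighbour (namely $u$, when $u$ is timid, or $v^u$, when $u\in D$). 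Since the surviving vertices of a clique remain mutually adjacent after deleting $u^v$, removing one clique vertex from a connected graph splits it into at most two components. Consequently, passing from a finite set $A$ to $A\cup\{w\}$ with $w$ a clique vertex raises the count of non-rayless components by at most one, because only the unique component of $H_G\setminus A$ containing $w$ is affected and it breaks into at most two pieces (and a subgraph of a rayless graph is again rayless, so no new non-rayless component can appear elsewhere). Iterating over the finitely many vertices of $F_c$ gives $|\rmC(H_G,F')|\le|\rmC(H_G,F_t)|+|F_c|\le\kappa+|F_c|=\kappa$, the final equality holding because $\kappa$ is infinite.

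The main obstacle is this last step: justifying rigorously that deleting a clique vertex splits a component into at most two pieces and never turns a single non-rayless component into more than two non-rayless ones. This requires careful bookkeeping with the adjacencies (i)--(iv), ensuring that the clique siblings of the deleted vertex always stay together and that at most one external branch is severed---in effect upgrading \Cref{PROP_Dominant_FinComp} from a single deletion in $H_G$ to a single deletion performed inside an arbitrary component of $H_G\setminus A$. Once this monotone ``$+1$ per clique vertex'' bound is established, the finiteness of $F_c$ together with the infinitude of $\kappa$ closes the argument.
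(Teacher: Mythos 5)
Your proposal is correct and follows essentially the same route as the paper: reduce to $\Omega(H_G)$ via \Cref{THM_HGHomeo}, apply \Cref{theorem:lindelofDegree}, use \Cref{LEMMA_Compon_Bijec} for the timid part of the separator, and absorb the finitely many clique vertices by the fact that each has a single external neighbour, so its removal splits a component into at most two pieces. The only difference is cosmetic: where the paper cites \Cref{PROP_Dominant_FinComp} for this last step, you re-derive the underlying local splitting argument from the adjacency rules, which in fact makes the ``at most $+1$ non-rayless component per deleted clique vertex'' bookkeeping more explicit than in the paper's own proof.
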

\begin{proof}
    First, assume $L(\Omega_E(G))\le\kappa$ and let $F$ be a given finite set of timid vertices.
    By \myref{THM_HGHomeo}, $L(\Omega(H_G))\le\kappa$ as well. 
    Thus, by \myref{theorem:lindelofDegree}, $|\rmC(H_G, F)|\le \kappa$. 
    In this case, it follows from \myref{LEMMA_Compon_Bijec} that $|\rmC(G, F)|\le\kappa$.

    Now assume $|\rmC(G, F)|\le\kappa$ for every finite set of timid vertices $F$ in $G$. 
    We will show that $|\rmC(G, F)|\le\kappa$ for every finite $F\subset \mathrm{V}(H_G)$, which will conclude the proof in view of  \myref{theorem:lindelofDegree} and \myref{THM_HGHomeo}. 
    So let such $F\subset \mathrm{V}(H_G)$ be given. 
    Consider 
    \[F' = F\cap \mathrm{t}(G).\]

    Then $|\rmC(G, F')|\le\kappa$ and, by  \myref{LEMMA_Compon_Bijec}, $|\rmC(H_G, F')|\le\kappa$ as well. 
    Furthermore, it follows from \myref{PROP_Dominant_FinComp} that the additional finite $F\setminus F'$ vertices can only separate finitely many components of $\rmC(H_G, F')$ into more finite components in $\rmC(H_G, F)$.
    Hence, $|\rmC(H_G, F)|\le \kappa$ and the proof is complete.
\end{proof}

We end this section with a remark: note that, in both \myref{THM_compactnesscharacterization} and \myref{COR_LindelofEdgeEnds}, the conditions are given in terms of finite sets of \emph{timid} vertices, while \myref{COR_RothEdgeEnds}(d) is given in terms of finite sets of \emph{edges}. 

Indeed, it is clear that a simple star of $\kappa$-many rays would satisfy the conditions of \myref{THM_compactnesscharacterization} if we switch the finite sets of timid vertices for finite sets of edges, while the corresponding edge-end space would have its Lindel\"of degree equal to $\kappa$ (thus, the timid vertices play a role in \myref{THM_compactnesscharacterization} and \myref{COR_LindelofEdgeEnds} that edges cannot).

On the other hand, the following example shows that condition \myref{COR_RothEdgeEnds}(d) cannot be given in terms of finite sets of timid vertices.

\begin{example}\label{EX_NotRothEdgeEnds}
    For each $s\in 2^{<\omega}$, let $K_s$ denote the complete graph with $\aleph_0$-many vertices. 
    Let $G$ be the graph obtained from the disjoint union of all $K_s$'s with $2^{<\omega}$, with the addition of an edge between each vertex of $K_s$  with its corresponding $s\in 2^{<\omega}$.

    In this case, it is clear that every vertex of $G$ edge-dominates some ray, so that $\mathrm{t}(G) = \emptyset$.
    Thus, $G$ trivially satisfies
    \begin{itemize}
        \item[(d')] For every strictly $\subseteq$-increasing sequence $(F_n:n\in\omega)$ of finite subsets of $\mathrm{t}(G)$ there is a sequence $(C_n:n\in\omega)$, where each $C_n$ is a connected component of $G\setminus F_n$, so that every ray of $G$ has a tail in $C_n$ for some $n\in\omega$.
    \end{itemize}
    Nevertheless, as $\Omega_E(G)$ contains an obvious copy of the Cantor space, it follows from \myref{COR_RothEdgeEnds} that it is not Rothberger.
\end{example}
\begin{figure}[ht!]
    \centering
    \begin{tikzpicture}
        \def\radius{0.15}
        \def\dy{0.4}
        \def\dx{9}
        \def\x{0}
        \def\y{0}

        \node(empty) at (0,0) {$\seq{\,}$};
        \node(0) at (-4,4) [circle]{(0)};
        \node(1) at (4,4) [circle, inner sep=3pt]{(1)};
        \node(00) at (-6,8) [circle, inner sep=3pt]{(00)};
        \node(01) at (-2,8) [circle, inner sep=3pt]{(01)};
        \node(10) at (2,8) [circle, inner sep=3pt]{(10)};
        \node(11) at (6,8) [circle, inner sep=3pt]{(11)};
        \node(bindots) at (0,10) {\Large $\vdots$};

        \draw (empty) -- (0);
        \draw (empty) -- (1);
        \draw (0) -- (00);
        \draw (0) -- (01);
        \draw (1) -- (10);
        \draw (1) -- (11);

                \draw[-{Latex[length=1.5mm]}] (-2,0) -- (-2,3.25*\dy);
                \foreach \i in {0,...,2}{
                    \node (K\i) at (-2,\dy*\i) {\tiny $\bullet$};
                }
                \node (KdotsL) at (-2-0.25,\dy*3) {\tiny $\vdots$};
                \draw (-2,\dy*0) edge[bend left=60] (-2,\dy*2);
                \draw[dashed] (-2-0.75,3.5*\dy)  rectangle (-2+0.5,\dy*0-0.1);
                \node (K) at (-2-0.4,\dy*0+0.1) {\tiny $K_{\seq{\,}}$};

                \draw[-{Latex[length=1.5mm]}] (-2-4,0+4) -- (-2-4,3.25*\dy+4);
                \foreach \i in {0,...,2}{
                    \node (0K\i) at (-2-4,\dy*\i+4) {\tiny $\bullet$};
                }
                \node (0KdotsL) at (-2-0.25-4,\dy*3+4) {\tiny $\vdots$};
                \draw (-2-4,\dy*0+4) edge[bend left=60] (-2-4,\dy*2+4);
                \draw[dashed] (-2-0.75-4,3.5*\dy+4)  rectangle (-2+0.5-4,\dy*0-0.1+4);
                \node (0K) at (-2-0.4-4,\dy*0+0.1+4) {\tiny $K_{\seq{0}}$};

                \draw[-{Latex[length=1.5mm]}] (-2+4,0+4) -- (-2+4,3.25*\dy+4);
                \foreach \i in {0,...,2}{
                    \node (1K\i) at (-2+4,\dy*\i+4) {\tiny $\bullet$};
                }
                \node (1KdotsL) at (-2-0.25+4,\dy*3+4) {\tiny $\vdots$};
                \draw (-2+4,\dy*0+4) edge[bend left=60] (-2+4,\dy*2+4);
                \draw[dashed] (-2-0.75+4,3.5*\dy+4)  rectangle (-2+0.5+4,\dy*0-0.1+4);
                \node (1K) at (-2-0.4+4,\dy*0+0.1+4) {\tiny $K_{\seq{1}}$};

                \draw[-{Latex[length=1.5mm]}] (-2-6,0+8) -- (-2-6,3.25*\dy+8);
                \foreach \i in {0,...,2}{
                    \node (00K\i) at (-2-6,\dy*\i+8) {\tiny $\bullet$};
                }
                \node (00KdotsL) at (-2-0.25-6,\dy*3+8) {\tiny $\vdots$};
                \draw (-2-6,\dy*0+8) edge[bend left=60] (-2-6,\dy*2+8);
                \draw[dashed] (-2-0.80-6,3.5*\dy+8)  rectangle (-2+0.5-6,\dy*0-0.1+8);
                \node (00K) at (-2-0.45-6,\dy*0+0.1+8) {\tiny $K_{\seq{00}}$};

                \draw[-{Latex[length=1.5mm]}] (-2-2,0+8) -- (-2-2,3.25*\dy+8);
                \foreach \i in {0,...,2}{
                    \node (01K\i) at (-2-2,\dy*\i+8) {\tiny $\bullet$};
                }
                \node (01KdotsL) at (-2-0.25-2,\dy*3+8) {\tiny $\vdots$};
                \draw (-2-2,\dy*0+8) edge[bend left=60] (-2-2,\dy*2+8);
                \draw[dashed] (-2-0.80-2,3.5*\dy+8)  rectangle (-2+0.5-2,\dy*0-0.1+8);
                \node (01K) at (-2-0.45-2,\dy*0+0.1+8) {\tiny $K_{\seq{01}}$};

                \draw[-{Latex[length=1.5mm]}] (-2+2,0+8) -- (-2+2,3.25*\dy+8);
                \foreach \i in {0,...,2}{
                    \node (10K\i) at (-2+2,\dy*\i+8) {\tiny $\bullet$};
                }
                \node (10KdotsL) at (-2-0.25+2,\dy*3+8) {\tiny $\vdots$};
                \draw (-2+2,\dy*0+8) edge[bend left=60] (-2+2,\dy*2+8);
                \draw[dashed] (-2-0.80+2,3.5*\dy+8)  rectangle (-2+0.5+2,\dy*0-0.1+8);
                \node (10K) at (-2-0.45+2,\dy*0+0.1+8) {\tiny $K_{\seq{10}}$};

                \draw[-{Latex[length=1.5mm]}] (-2+6,0+8) -- (-2+6,3.25*\dy+8);
                \foreach \i in {0,...,2}{
                    \node (11K\i) at (-2+6,\dy*\i+8) {\tiny $\bullet$};
                }
                \node (11KdotsL) at (-2-0.25+6,\dy*3+8) {\tiny $\vdots$};
                \draw (-2+6,\dy*0+8) edge[bend left=60] (-2+6,\dy*2+8);
                \draw[dashed] (-2-0.80+6,3.5*\dy+8)  rectangle (-2+0.5+6,\dy*0-0.1+8);
                \node (11K) at (-2-0.45+6,\dy*0+0.1+8) {\tiny $K_{\seq{11}}$};

                    \foreach \i in {0,...,2}{
                        \draw[line width=2pt, white] (empty) -- (K\i);
                        \draw (empty) edge (-2,\dy*\i);
                    }
                    \node(KRdots) at (-1.25,2.25*\dy) {\tiny $\vdots$};
                    \foreach \i in {0,...,2}{
                        \draw[line width=2pt, white] (0) -- (0K\i);
                        \draw (0) edge (-2-4,\dy*\i+4);
                    }
                    \node(0KRdots) at (-1.25-4,2.25*\dy+4) {\tiny $\vdots$};
                    \foreach \i in {0,...,2}{
                        \draw[line width=2pt, white] (1) -- (1K\i);
                        \draw (1) edge (-2+4,\dy*\i+4);
                    }
                    \node(1KRdots) at (-1.25+4,2.25*\dy+4) {\tiny $\vdots$};
                    \foreach \i in {0,...,2}{
                        \draw[line width=2pt, white] (00) -- (00K\i);
                        \draw (00) edge (-2-6,\dy*\i+8);
                    }
                    \node(00KRdots) at (-1.25-6,2.25*\dy+8) {\tiny $\vdots$};
                    \foreach \i in {0,...,2}{
                        \draw[line width=2pt, white] (01) -- (01K\i);
                        \draw (01) edge (-2-2,\dy*\i+8);
                    }
                    \node(01KRdots) at (-1.25-2,2.25*\dy+8) {\tiny $\vdots$};
                    \foreach \i in {0,...,2}{
                        \draw[line width=2pt, white] (10) -- (10K\i);
                        \draw (10) edge (-2+2,\dy*\i+8);
                    }
                    \node(10KRdots) at (-1.25+2,2.25*\dy+8) {\tiny $\vdots$};
                    \foreach \i in {0,...,2}{
                        \draw[line width=2pt, white] (11) -- (11K\i);
                        \draw (11) edge (-2+6,\dy*\i+8);
                    }
                    \node(11KRdots) at (-1.25+6,2.25*\dy+8) {\tiny $\vdots$};

            \draw[thick] (-9,10.75)  rectangle (6.5,-0.5);
            \node (G) at (6,10.25) {\Large $G$};
    \end{tikzpicture}
    \caption{Figure of the graph in \myref{EX_NotRothEdgeEnds}, in which no vertex is timid but whose edge-end space is not Rothberger.}
    \label{FIG_NotQuotient}
\end{figure}
\section{Final remarks}

In this work, we have investigated several covering properties of ray spaces and end spaces of graphs, elucidating their connections to the combinatorial structure of trees and graphs.
Our results show that ray spaces form a new and natural class of paracompact $D$-spaces, and we have characterized the Lindelöf degree and the extent of end spaces and edge-end spaces in terms of the underlying graph's combinatorics.
We have also provided a combinatorial characterization of the Rothberger property for ray spaces, relating it to the absence of Cantor subspaces and the scatteredness of the space.
Furthermore, we established that these spaces are Menger if and only if they are $\sigma$-compact, and gave precise criteria for when this occurs.

These findings contribute to a deeper understanding of the interplay between topological properties and graph-theoretic structure, and suggest several directions for future research.
However, we feel like there might exist a simpler characterization for $\sigma$-compactness of end spaces.
Thus, we pose the following question.

\begin{question}
    Is there a simpler combinatorial characterization of when the end space of a graph is $\sigma$-compact?
\end{question}

The authors thank Max Pitz for pointing out Corollary~5.5 of \cite{koloschin2023end}, which made the proof of Proposition~\ref{proposition:sigmaCompactEnds} trivial and directly answered a question posed in the first draft of this manuscript.
\printbibliography
\end{document}